\def\Xint#1{\mathchoice
{\XXint\displaystyle\textstyle{#1}}%
{\XXint\textstyle\scriptstyle{#1}}%
{\XXint\scriptstyle\scriptscriptstyle{#1}}%
{\XXint\scriptscriptstyle\scriptscriptstyle{#1}}%
\!\int}
\def\XXint#1#2#3{{\setbox0=\hbox{$#1{#2#3}{\int}$ }
\vcenter{\hbox{$#2#3$ }}\kern-.59\wd0}}
\def\dashint{\Xint-}
\renewcommand{\t}{\tilde}
\newcommand{\mc}[1]{\mathcal{#1}}
\newcommand{\mb}[1]{\mathbb{#1}}
\newcommand{\D}{\mathcal{D}}
\newcommand{\R}{\mathbb{R}}
\newcommand{\C}{\mathbb{C}}
\newcommand{\Z}{\mathbb{Z}}
\newcommand{\T}{\mathbb{T}}
\newcommand{\N}{\mathcal{N}}
\newcommand{\M}{\mathcal{M}}
\newcommand{\W}{\mathcal{W}}
\newcommand{\K}{\mathcal{K}}
\newcommand{\B}{\mathcal{B}}
\renewcommand{\H}{\mathbb{H}}
\renewcommand{\epsilon}{\varepsilon}
\DeclareMathOperator{\dist}{dist}
\DeclareMathOperator{\Tr}{Tr}
\renewcommand{\Im}{\operatorname{Im}}
\DeclareMathOperator{\rank}{rank}
\newcommand{\norm}[1]{\left\|#1\right\|}
\numberwithin{equation}{section}
\newtheorem{theorem}{Theorem}[section]
\newtheorem{lemma}[theorem]{Lemma}
\newtheorem{proposition}[theorem]{Proposition}
\newtheorem{definition}[theorem]{Definition}
\newtheorem{corollary}[theorem]{Corollary}
\theoremstyle{remark}
\newtheorem{remark}[theorem]{Remark}
\newcommand{\cD}{{\mathcal{D}}}
\newcommand{\be}{\begin{eqnarray}}
\newcommand{\ee}{\end{eqnarray}}
\newcommand{\mes}{\mathop{\rm{mes}\, }}
\renewcommand{\mod}{{\rm{mod}\, }}
\def\beeq{\begin{equation}}
\def\eneq{\end{equation}}
\def\bm{\begin{matrix}}
\def\endm{\end{matrix}}
\def\Im{{\rm Im}}
\begin{document}

\title[H\"{o}lder continuity of the integrated density of states]{H\"{o}lder continuity of the integrated density of states for quasi-periodic Jacobi operators}

\author{Kai Tao}
	\address{College of Sciences, Hohai University, 1 Xikang Road Nanjing Jiangsu 210098 P.R.China}
	\email{ktao@hhu.edu.cn,\ tao.nju@gmail.com}

\author{Mircea Voda}
	\address{Department of Mathematics, The University of Chicago
	5734 South University Avenue, Chicago, IL 60615, U.S.A.}
	\email{mircea.voda@math.uchicago.edu}

\thanks{ The first author was supported by the Fundamental Research Funds for the Central Universities (Grant 2013B01014) and the National Nature Science Foundation of China (Grant 11326133, Grant 11401166).}

\date{}

\begin{abstract}
	We show H\"older continuity for the integrated density of states of  a quasi-periodic Jacobi operator
	with analytic coefficients, in the
	regime of positive Lyapunov exponent and with a strong Diophantine condition on  the frequency.
	In particular,
	when the coefficients are trigonometric polynomials we express the H\"older exponent in terms of the
	degrees of the coefficients.
\end{abstract}

\maketitle
\tableofcontents

\section{Introduction}

We consider the quasi-periodic Jacobi operators on $l^2(\Z)$ defined by
\begin{equation*}
	(H(x,\omega)\phi)_n=-b(x+(n+1)\omega)\phi_{n+1}-\overline{b(x+n\omega)}\phi_{n-1}+a(x+n\omega)\phi_n
	,\ n\in\Z,
\end{equation*}
where $ a:\T\to\R $, $ b:\T\to\C $ ($ \T:=\R/\Z $) are real
analytic functions, $ b $ is not identically zero, and $ \omega $ satisfies a strong Diophantine condition.
Specifically, we have
\begin{equation*}
	\omega\in \T_{c,\alpha}:= \left\{ \omega: \norm{n\omega}\ge \frac{c}{n(\log n)^\alpha},~n\ge 1 \right\},
\end{equation*}
with some $ c\ll 1 $ and $ \alpha>1 $.

We let $ H_N(x,\omega) $ be the restriction of $ H(x,\omega) $ to $ [0,N-1] $, with Dirichlet boundary
conditions. We use $ \N(E,\omega) $ and $ L(E,\omega) $ to denote  the integrated
density of states and the Lyapunov exponent  for $ H(x,\omega) $
(see \cref{sec:preliminaries} for definitions).

We will be assuming that $ a $ and $ b $ are trigonometric polynomials of degrees $ d_a $ and $ d_b $.
Let $ d_0:=\max(d_a,d_b) $ and let $ n_b $ be the number of zeroes of $ b $ on $ \T $. Our methods also
apply to general $ a,b $. For the meaning of $ d_0 $ in this general setting see \cref{rem:d_0}. The following is our main result.

\begin{theorem}\label{thm:main}
	Let $ \omega\in \T_{c,\alpha} $ and $ I\subset\R $ be an interval such that
	$ L(E,\omega)>\gamma>0 $ for all $ E\in I $ and let $ p=1/(n_b+2d_0) $.  Fix $ \epsilon>0 $.
	\begin{enumerate}		
		\item
		There exists $ N_0=N_0(a,b,I,\omega,\gamma,\epsilon) $
		such that for any $ N\ge N_0 $, $ (1/N)^{1/p}\ll \eta\le 1/N $, and $ E\in I $ we have
		\begin{equation*}
			\int_\T \left| \sigma(H_N(x,\omega))\cap [E-\eta,E+\eta] \right|\,dx \le N\eta^{p-\epsilon}.
		\end{equation*}
		
		\item
		The integrated density of states satisfies
		\begin{equation*}
			\N(E+\eta,\omega)-\N(E-\eta,\omega)\le \eta^{p-\epsilon},
		\end{equation*}
		for all $ E\in I $ and $ \eta\le \eta_0(a,b,I,\omega,\gamma,\epsilon) $.
	\end{enumerate}
\end{theorem}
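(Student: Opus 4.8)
The plan is to prove part (1) via the standard approach of relating the number of eigenvalues of $H_N(x,\omega)$ in a small interval around $E$ to zeros of the Dirichlet determinant $f_N(x,\omega,E):=\det(H_N(x,\omega)-E)$, and then controlling the average number of such zeros using subharmonicity and the positivity of the Lyapunov exponent. Part (2) will then follow from part (1) by a Thouless-type formula argument together with an averaging/telescoping trick over $x$. Concretely, for part (1) I would first establish (or invoke from the preliminaries) that, as a function of $z$ in a neighborhood of $\T$, $u_N(z):=\frac1N\log|f_N(z,\omega,E)|$ is subharmonic with $\int_\T u_N = \frac1N\log|\text{something}|$ controlled by the Lyapunov exponent, and that by the large deviation theorem for the entries of the transfer matrix (consequence of positive $L$), one has $u_N(x)\approx L(E,\omega)$ for most $x$, with the measure of the exceptional set decaying. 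Crucially, the polynomial structure of $a$ and $b$ means $f_N(\cdot,\omega,E)$ has a controlled number of zeros — roughly $N(n_b+2d_0)$ — when viewed appropriately (clearing denominators from the $b$'s turns $f_N$ into a trigonometric polynomial of degree $\lesssim N(n_b+2d_0)$, i.e. $\det(H_N-E)\prod b$ is analytic of the right degree), so that counting zeros in a disk of radius $\sim\eta$ and using Jensen's formula gives the exponent $p=1/(n_b+2d_0)$.

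The core mechanism is: $\sigma(H_N(x,\omega))\cap[E-\eta,E+\eta]\ne\emptyset$ forces $f_N(x,\omega,E')=0$ for some $E'$ within $\eta$ of $E$, hence by analyticity in $E$ (and a Cartan/Jensen estimate) $|f_N(x,\omega,E)|$ is small, which by the large-deviation lower bound can only happen for $x$ in an exceptional set. More precisely, I would fix $E\in I$, consider $g(z):=f_N(z,\omega,E)\prod_{j=0}^{N-1} b(z+(j+1)\omega)$ (or the analogous cleared-denominator object), note it is analytic on a strip and a trigonometric polynomial of degree $D_N\lesssim N(n_b+2d_0)$, and estimate $\int_\T|\{\text{eigenvalues in }[E-\eta,E+\eta]\}|\,dx$ by $\int_\T (\text{number of }E'\in[E-\eta,E+\eta]\text{ with }g=0)\,dx$. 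Then I would bound this count at each $x$ via Jensen's formula applied in the $E'$ variable: the number of zeros in a $\eta$-disk is $\lesssim \log\frac{\max_{|E'-E|\le 2\eta}|g|}{|g(x,\omega,E)|}$, and integrate in $x$, using the upper bound $\int_\T \log^+|g|\lesssim N L + O(N)$ from subharmonicity together with the lower bound $\frac1N\int_\T \log|g| \gtrsim L - o(1)$ from the large deviation estimate. Balancing, the total zero count in the strip $[E-\eta,E+\eta]\times\T$ is $\lesssim D_N$, so the fraction of the $E'$-interval occupied is $\lesssim D_N\eta/\text{(length)}$... the sharper route is to observe directly that a degree-$D_N$ trigonometric polynomial in $x$ has $\le D_N$ zeros, giving $\int_\T|\sigma\cap[E-\eta,E+\eta]|\,dx\le 2\eta\cdot\#\{x:g(x)=0\text{ for some }E'\}\le 2\eta D_N/(\text{separation})$; making this quantitative enough to get $N\eta^{p-\epsilon}$ rather than $N\eta$ requires the positive-Lyapunov-exponent input to show eigenvalues are spread out, i.e.\ that $\eta$-close eigenvalues are rare.

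The main obstacle I anticipate is precisely this last point: upgrading the trivial bound $N\eta$ (from $D_N\sim N\cdot\text{const}$ zeros each contributing an interval of length $2\eta$) to the Hölder bound $N\eta^{p-\epsilon}$. This requires showing that, after clearing denominators, $g(x,\omega,E)=\det(H_N(x,\omega)-E)\prod b$ has, for most $x$, no zeros in $E'$ within $\eta^{p}$ of $E$ — equivalently, that $|g(x,\omega,E)|$ is not too small for most $x$, which is exactly the large deviation estimate for $\frac1N\log|g|$, but one needs it with the error term quantified against $\eta$ and the exceptional set measure summed against the number of possible eigenvalue locations. I would handle this by a covering argument in $E'$: partition $[E-\eta,E+\eta]$ into $\sim 1$ intervals of length $\eta$, and for each apply the avalanche principle / large deviation bound to control $\int_\T\log|g(x,\omega,E')|\,dx$ uniformly, so that the bad set where an eigenvalue lies within $\eta$ of $E$ has measure $\lesssim \eta^{p-\epsilon}$ after accounting for the degree $D_N$ — the degree is what converts the exponential-in-$N$ smallness of the exceptional set into the polynomial-in-$\eta$ Hölder exponent $p=1/(n_b+2d_0)$. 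For part (2), I would use that $\N(E,\omega)=\int_\T(\text{normalized eigenvalue counting function})\,dx$ up to $O(1/N)$ errors and that $\N(E+\eta)-\N(E-\eta)$ is, up to such errors, exactly $\frac1N\int_\T|\sigma(H_N(x,\omega))\cap[E-\eta,E+\eta]|\,dx$; choosing $N\sim\eta^{-1/p}$ (the largest $N$ allowed by the constraint $(1/N)^{1/p}\ll\eta$) and invoking part (1) yields $\N(E+\eta)-\N(E-\eta)\lesssim \eta^{p-\epsilon}$, with the $\epsilon$ absorbing the $O(1/N)=O(\eta^{1/p})$ correction.
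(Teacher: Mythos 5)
Your part (2) reduction is essentially the paper's: the paper uses a Weyl-interlacing superadditivity estimate (\cref{lem:multiscale-IDS}) to pass from the finite-scale average to $\N$, which is the precise version of your ``choose $N\sim\eta^{-1/p}$ and absorb the $O(1/N)$ error'' step. The genuine gap is in part (1), and it is exactly the obstacle you flag yourself: your proposal contains no working mechanism that produces the exponent $p=1/(n_b+2d_0)$. Your suggested source of the exponent --- that $f_N^a(\cdot,\omega,E)$ is (after clearing denominators) a trigonometric polynomial of degree $D_N\lesssim N(n_b+2d_0)$, so that Jensen/degree counting ``converts'' this into the H\"older exponent --- does not work: a global degree of order $N$ only yields the trivial zero count $O(N)$ and hence the bound $N\eta$, and no covering argument in $E'$ over $\sim 1$ subintervals changes the power of $\eta$. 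Moreover, the large deviation estimate with deviations of size $(\log N)^{C}$ produces an exceptional set far too large to be summed against eigenvalue locations in the way you sketch; this is precisely why the paper replaces it, at the crucial point, by a Cartan-type lower bound with no exceptional set (\cref{lem:ldt-Cartan}, \cref{lem:b-Cartan}).

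The paper's actual route for part (1) is different in structure: it is a Wegner-type argument through the Green's function, not a zero count of $f_N^a$ in the $E$-variable. One bounds $|\sigma(H_N(x,\omega))\cap[E-\eta,E+\eta]|$ by $2\eta\,\Im\Tr(H_N-E-i\eta)^{-1}$, writes the diagonal Green's function entries via Cramer's rule as ratios of Dirichlet determinants, and controls these by the ratio of transfer-matrix norms $\W_{N,k}$ (\cref{lem:pre-Wegner}). The Avalanche Principle (\cref{lem:multiscale-concatenation-estimate}) then collapses $\W_{N,k}$ to a local quantity $\W_{\Lambda}$ on a window of polylogarithmic length $|\Lambda|\sim(\log N)^{14}$. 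The exponent comes from a \emph{scale-free local} bound at that window: \cref{prop:zero-count-const} --- the technical core of the paper, proved by the multiscale Jensen-average machinery of \cref{sec:harnack,sec:multiscale-count,sec:zero-count} (adjusted endpoints, Harnack, shifting zeroes along the orbit) --- shows $f_{\Lambda}^a$ has at most $2d_0$ zeroes in a tiny disk, while $\t b$ has at most $n_b$; Cartan then gives $|\t b(x+\omega)|^{-1}\W_{\Lambda}(x)\lesssim \exp((\log l)^C)\max\bigl(|x-\zeta_j|,|x+\omega-\zeta_j'|\bigr)^{-(2d_0+n_b)}$. Summing along the orbit $\{x+k\omega\}$ with the Diophantine condition (\cref{lem:p-sum-of-shifts}) and excluding the $O(N\rho_0)$ sites within $\rho_0$ of the zeroes yields $\eta\rho_0^{1-(2d_0+n_b)}+\rho_0$, and optimizing $\rho_0=\eta^p$ gives $p=1/(2d_0+n_b)$. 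Without something playing the role of the constant local zero bound of \cref{prop:zero-count-const} and the Green's-function/orbit-summation framework, your outline cannot reach the claimed exponent.
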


Our work generalizes the result of Goldstein and Schlag \cite[Thm. 1.1]{GS-08-Fine} from the Schr\"odinger setting ($ b=1 $). In the almost
Mathieu case ($ b=1 $, $ a(x)=2\lambda\cos(2\pi x) $) the H\"older exponent obtained through this approach is
$ 1/2-\epsilon $, with arbitrary $ \epsilon>0 $.
It is known that the H\"older exponent in this setting cannot be
better than $ 1/2 $ (see for example \cite[Cor. 20]{P-06-A-nonperturbati}), so one gets an asymptotically
optimal result. In fact, Avila and Jitomirskaya \cite{AJ-10-Almost-localiza} showed that the H\"older
exponent is  exactly $ 1/2 $ for the almost Mathieu operator with $ \lambda\neq -1,0,1 $ and general analytic
potentials with small coupling constant.  However, their result covers the positive Lyapunov exponent regime
, via Aubry duality, only for the almost Mathieu operator.

The most important particular example of quasi-periodic Jacobi operator is the extended Harper's model:
\begin{equation*}
	b(x)=\lambda_3 e^{-2\pi i(x+\omega/2)}+\lambda_2+\lambda_1 e^{2\pi i(x+\omega/2)},
	a(x)=2\lambda \cos(2\pi x). 	
\end{equation*}
Unlike the almost Mathieu operator, the positive Lyapunov exponent regime for the extended Harper's model
cannot be approached via duality for all the values of the coupling constants (see \cite{JM-12-Analytic-quasi-}). Therefore, even for this simple operator our result may cover cases not covered by the methods
from \cite{AJ-10-Almost-localiza}.

The main difficulty in extending the work of  Goldstein and Schlag \cite{GS-08-Fine,GS-11-resonances} is
dealing with the singularities coming from the zeroes of $ b $. The groundwork for doing this has been laid in \cite{BV-13-estimate} and \cite{T-11-Hbackslasholder}, where most of the basic tools needed for this paper
have been developed.

The paper is organized as follows. The basic definitions and tools are reviewed in \cref{sec:preliminaries}. The proof of \cref{thm:main} is given in \cref{sec:proof-main}. The proof
relies on the estimate of the number of zeroes for Dirichlet determinants in a small disk, obtained in
\cref{sec:zero-count}. This estimate is obtained through the multiscale method developed in \cref{sec:multiscale-count}. Finally, the auxiliary estimates needed for \cref{sec:multiscale-count} are
established in \cref{sec:harnack}.

\section{Preliminaries}\label{sec:preliminaries}

We begin by recalling the definition of the integrated density of states and some aspects of the transfer
matrix formalism for Jacobi operators.

We use $E_j^{(N)}(x,\omega)$ to denote the eigenvalues of $ H_N(x,\omega) $ and let
\begin{equation*}
\N_N(E,x,\omega) = \frac{1}{N} \left|\left\{ E_j^{(N)}(x,\omega): E_j^{(N)}(x,\omega)<E \right\}\right|.
\end{equation*}
It is known that Kingman's subadditive ergodic theorem implies that there exists $ \N(E,\omega) $ such
that
\begin{equation*}
	\N(E,\omega)=\lim_{N\to \infty} \int_{\T} \N_N(E,x,\omega)\,dx \stackrel{\text{a.s.}}{=}
	\lim_{N\to \infty}  \N_N(E,x,\omega).
\end{equation*}
See for example \cite[Sec. 5.2]{Tes-00-Jacobi}. The quantity $ \N(E,\omega) $ is called the integrated
density of states.

The methods we are using are complex analytic so we will work with an extension of the operator to a
neighbourhood of the real line. We will use the notation
\begin{equation*}
	\H_y:=\{z\in \C: |\Im z|< y\}.
\end{equation*}
It is known that $ a $ and $ b $ admit complex analytic extensions to $ \H_{\rho_0} $ with
$ \rho_0=\rho_0(a,b) $. It is essential for us that $ \det(H_N(\cdot,\omega)-E) $ is a complex
analytic function. To achieve this we need to work with the complex analytic extension of $ \overline b $
instead of $ \overline b $. More precisely, we let $ \t b(z)=\overline{b(\overline z)} $ and we have
\begin{equation}\label{eq:H_N}
	H_N(z,\omega)=
	\begin{bmatrix}
		a\left(z\right) & -b\left(z+\omega\right) & 0 & \ldots & 0\\
		-\t b\left(z+\omega\right) & a\left(z+\omega\right) & -b\left(z+2\omega\right) & \ldots & 0\\
		\ddots & \ddots & \ddots & \ldots & \vdots\\
		0 & \ldots & 0 & -\t b\left(z+\left(N-1\right)\omega\right) & a\left(z+\left(N-1\right)\omega\right)
	\end{bmatrix}.
\end{equation}
The operator is not necessarily self-adjoint off $ \T $, but that would have also been the case if we used
$ \overline b $ instead of $ \t b $ (because the values on the diagonal  are not necessarily real).

We let $M_{N}$ be the $N$-step transfer matrix such that
\begin{equation*}
	\begin{bmatrix}
    	\phi_N\\ \phi_{N-1}
    \end{bmatrix}
    =M_{N}\begin{bmatrix}
    	\phi_0\\ \phi_{-1}
    \end{bmatrix}\, N\ge1.
\end{equation*}
for  any $\phi$ satisfying the difference equation $H\left(z,\omega\right)\phi=E\phi$.
We have that
\begin{equation*}
    M_{N}\left(z,\omega,E\right)=\prod_{j=N-1}^{0}\left(\frac{1}{b\left(z+\left(j+1\right)\omega\right)}
    \left[\begin{array}{cc}
    a\left(z+j\omega\right)-E & -\tilde{b}\left(z+j\omega\right)\\
    b\left(z+\left(j+1\right)\omega\right) & 0
    \end{array}\right]\right),
\end{equation*}
for $z$ such that $\prod_{j=1}^{N}b\left(z+j\omega\right)\neq 0$.
Because $ M_N(z) $ is not necessarily analytic we will in fact work with a version that has the
singularities removed:
\begin{equation*}
	M_{N}^{a}(z,\omega,E)=\left(\prod_{j=1}^{N}b\left(z+j\omega\right)\right)
	M_{N}(z,\omega,E).
\end{equation*}
Based on the definitions, it is straightforward
to check that
\begin{equation}\label{eq:Mu-Ma}
    \log\norm{M_N(z,\omega,E)} =-S_N(z+\omega,\omega)+\log\norm{M_N^a(z)},
\end{equation}
where $S_{N}\left(z,\omega\right)=\sum_{k=0}^{N-1}\log\left|b\left(z+k\omega\right)\right|$. We will
also use $\t S_{N}\left(z,\omega\right)=\sum_{k=0}^{N-1}\log\left|\t b\left(z+k\omega\right)\right|$.
Note that $S_{N}\left(x,\omega\right)=\t S_{N}\left(x,\omega\right)$
for $x\in\mb T$.

We let
\begin{equation*}
	L_{N}\left(y,\omega,E\right)=\frac{1}{N}\int_{\mathbb{T}}\log\left\Vert M_{N}\left(x+iy,\omega,E\right)
	\right\Vert dx,
\end{equation*}
\begin{equation*}
	L\left(y,\omega,E\right)=\lim_{N\rightarrow\infty}L_{N}\left(y,\omega,E\right)=\inf_{N\ge1}L_{N}\left(y,
	\omega,E\right).
\end{equation*}
The limits exist by subadditivity. We also consider the quantities $L_{N}^{a}$ and $L^{a}$ which are defined analogously. Furthermore let $D\left(y\right)=\int_{\mathbb{T}}\log\left|b\left(x+iy\right)\right|dx$.
When $y=0$ we omit the $y$ argument, so for example we write $L\left(\omega,E\right)$
instead of $L\left(0,\omega,E\right)$.  From \cref{eq:Mu-Ma} it follows that
\begin{equation}\label{eq:L-La}
	L\left(\omega,E\right)=-D+L^{a}\left(\omega,E\right).
\end{equation}

Given an interval $ \Lambda=[a,b] $ we let $ H_\Lambda(z,\omega)=H_{b-a+1}(z+a\omega,\omega) $ be the
restriction of $ H(z,\omega) $ to $ \Lambda $ with Dirichlet boundary conditions and
$ f_\Lambda^a(z,\omega,E):=\det(H_\Lambda(z,\omega)-E) $.
A fundamental property of $M_{N}^{a}$ is its relation to the characteristic polynomials of the
finite scale restriction of $ H(x,\omega) $:
\begin{multline}\label{eq:Ma-fa}
    M_{N}^{a}(z)
    =\begin{bmatrix}
    	f_{N}^{a}(z) & -\tilde b(z)f_{N-1}^{a}(z+\omega)\\
	    b(z+N\omega)f_{N-1}^{a}(z) & -\tilde b(z)b(z+N\omega)f_{N-2}^{a}(z+\omega)
    \end{bmatrix}\\
    =\begin{bmatrix}
    	f_{[0,N-1]}^{a}(z) & -\tilde b(z)f_{[1,N-1]}^{a}(z)\\
	    b(z+N\omega)f_{[0,N-2]}^{a}(z) & -\tilde b(z)b(z+N\omega)f_{[1,N-2]}^{a}(z)
    \end{bmatrix}.
\end{multline}
We refer to \cite[Chap. 1]{Tes-00-Jacobi} for a discussion of such relations.

Next we recall some basic tools that will be used throughout the paper. The main tool is a large deviations
estimate for the Dirichlet determinants.

\begin{proposition}\label{prop:ldt-determinant}
	Let $ (\omega,E)\in \T_{c,\alpha}\times\C $ such that $ L(y,\omega,E)>\gamma>0 $,
	$ y\in(-\rho_0,\rho_0) $. For any $ H>0 $, $ N\ge N_0(a,b,E,\omega,\gamma) $, 	and $ |y|<\rho_0 $ we
	have
	\begin{equation*}
		\mes \{ x\in \T: \left| \log|f_N^a(x+iy,\omega,E)|-NL^a(y,\omega,E) \right|
			>H(\log N)^{C_0} \}\le C_1 \exp(-H),
	\end{equation*}
	with $ C_0=C_0(\omega) $ and $ C_1=C_1(a,b,E,\omega,\gamma) $. Furthermore, the same estimate holds for
	all the other entries of $ M_N^a(x+iy,\omega,E) $.
\end{proposition}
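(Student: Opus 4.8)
The plan is to deduce the large deviations estimate for $\log|f_N^a(\cdot,\omega,E)|$ from a large deviations estimate for the transfer matrix norm $\log\|M_N^a(\cdot,\omega,E)\|$ together with the avalanche principle, following the Goldstein--Schlag scheme but working with the analytic ``$a$-normalized'' quantities so as to avoid the singularities of $b$. First I would record the elementary inequalities relating the individual entries of $M_N^a$ to $\log\|M_N^a\|$: on the one hand $\log|f_N^a|\le \log\|M_N^a\|$ (and similarly for the other entries), so an upper bound on the entries is immediate once we control $\log\|M_N^a\|$ from above, and the upper bound on $\log\|M_N^a\|$ follows from subharmonicity (the function $z\mapsto \log\|M_N^a(z,\omega,E)\|$ is subharmonic in a strip, and its mean over $\T$ is $NL_N^a(y,\omega,E)$, which converges to $NL^a(y,\omega,E)$ at rate $O(\log N)^{C_0}$ by the uniform upper semicontinuity / avalanche-principle arguments). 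So the substance is the lower bound on $\log|f_N^a(x+iy)|$ for $x$ outside a small set.

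For the lower bound I would use the avalanche principle for products of $2\times 2$ matrices. Split $[0,N-1]$ into consecutive blocks of length $\ell\sim (\log N)^{C}$; write $M_N^a$ as a product of the corresponding block matrices $A_j=M_{\ell}^a(z+j\ell\omega,\omega,E)$ (rescaled appropriately so that $\|A_j\|\approx e^{\ell L^a}$). The large deviations estimate at the single scale $\ell$ — which is the base case and must be obtained by a separate, more hands-on argument (e.g. via the subharmonicity of $\frac1\ell\log\|M_\ell^a\|$ and a BMO/John--Nirenberg bound, or by bootstrapping from an even smaller scale) — guarantees that for $x$ outside a set of measure $\le C_1 e^{-H}$, \emph{all} the blocks $A_j$ simultaneously satisfy $\big|\log\|A_j\| - \ell L^a\big|\le H(\log \ell)^{C_0}$ and the angle condition $\log\|A_{j+1}A_j\|\ge \log\|A_{j+1}\|+\log\|A_j\|-\ell L^a$ needed to apply the avalanche principle. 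Then the avalanche principle yields $\big|\log\|M_N^a\| - \sum_j\log\|A_j\| + \sum_j\log\|A_{j+1}A_j\| \cdot(\dots)\big|$ small, hence $\log\|M_N^a(x+iy)\|\ge NL^a(y,\omega,E)-H(\log N)^{C_0}$ on the good set. Finally, from the matrix entry structure in \cref{eq:Ma-fa} one passes from $\log\|M_N^a\|$ to $\log|f_N^a|$: the $(1,1)$ entry of $M_N^a$ is exactly $f_N^a=f_{[0,N-1]}^a$, and $\log\|M_N^a\|$ is comparable to the max of the logs of the four entries, so it suffices to rule out near-cancellation making the $(1,1)$ entry much smaller than the norm. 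This is handled by splitting the block product at a middle point, $M_N^a=M_{N-k}^a(z+k\omega)M_k^a(z)$, applying the lower bound to both factors, and using that the $(1,1)$ entry of the product picks up a term comparable to the product of the norms of the two halves unless the relevant angles are resonant; the measure of the resonant set of $x$ is again controlled by the single-scale estimate applied to off-diagonal entries. The stated conclusion for ``all the other entries'' follows the same way using the corresponding entries $f_{[0,N-2]}^a$, $f_{[1,N-1]}^a$, $f_{[1,N-2]}^a$ in \cref{eq:Ma-fa}.

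The main obstacle is the base case of the induction at the small scale $\ell$ and, relatedly, keeping the error terms of the form $H(\log N)^{C_0}$ (rather than $H N^{\delta}$) uniform as one iterates through all scales between $\ell$ and $N$; this is precisely where the strong Diophantine condition $\omega\in\T_{c,\alpha}$ with $\alpha>1$ is used, to sum the contributions $\sum_s (\log \ell_s)^{C_0}$ over the geometric sequence of scales and still obtain a bound polylogarithmic in $N$. A secondary technical point is that, unlike in the Schr\"odinger case, the factors of $b(z+j\omega)$ in the definition of $M_N^a$ mean that $\log\|M_N^a\|$ and $\log\|M_N\|$ differ by the Birkhoff-type sum $S_N(z+\omega,\omega)$ (see \cref{eq:Mu-Ma}); one must therefore make sure the avalanche-principle bookkeeping is done for the analytic quantity $M_N^a$ throughout and only translates to statements about $f_N^a$ at the end, which is the whole reason \cref{prop:ldt-determinant} is phrased in terms of $f_N^a$ and $L^a$ rather than $f_N$ and $L$. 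The relevant preparatory estimates — uniform upper bounds, the avalanche principle, and the single-scale large deviations input — are exactly the ``basic tools'' the excerpt attributes to \cite{BV-13-estimate} and \cite{T-11-Hbackslasholder}, so the proof of \cref{prop:ldt-determinant} itself should reduce to assembling these ingredients with careful error tracking.
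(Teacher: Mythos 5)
Your proposal re-derives the large deviations theorem from scratch via the Goldstein--Schlag multiscale scheme, whereas the paper does not prove it this way at all: it simply cites \cite[Prop.~2.1]{BV-14-On-optimal-sepa} for the determinant $f_N^a$, and in the appendix only justifies the two modifications needed, namely (a) that the statement extends from $y=0$ to general $|y|<\rho_0$ because the argument uses only positivity of $L(y,\omega,E)$ and never the Hermiticity of $H_N$, and (b) that the estimate for the remaining entries of $M_N^a$ follows from the one for $f_N^a$ via \cref{eq:Ma-fa}, provided one also controls the deviations of $\log|b|$ and $\log|\t b|$ by the large deviations estimate for subharmonic functions \cite[Thm.~3.8]{GS-01-Holder}. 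So in substance you are sketching the proof of the cited result rather than the paper's short reduction; that is legitimate, but two points in your sketch need repair.

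First, the concrete gap: your treatment of ``all the other entries'' identifies them with the determinants $f^a_{[1,N-1]}$, $f^a_{[0,N-2]}$, $f^a_{[1,N-2]}$, but by \cref{eq:Ma-fa} the actual entries carry the extra factors $\t b(z)$ and $b(z+N\omega)$. In the Jacobi setting $b$ has zeros on $\T$, so $\log|b(x+iy)|$ is unbounded below and its deviations from the mean $D(y)$ must be controlled separately; this is exactly the step the paper supplies with the subharmonic large deviations estimate, and it cannot be waved away as ``the same way.'' (It also shifts the centering from $NL^a$ by an $O(1)$ amount, harmless but worth noting.) Second, your passage from the lower bound on $\log\|M_N^a\|$ to the lower bound on the $(1,1)$ entry --- splitting at a midpoint and excluding a ``resonant'' set of $x$ --- is underspecified: bounding the measure of that resonant set with deviation only $H(\log N)^{C_0}$ is itself a determinant-type large deviations statement, so as written the argument is circular unless you set up an induction on scales for the determinant. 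The standard and cleaner device, used in the cited proof and encoded here in \cref{lem:AP-determinant}, is to run the avalanche principle with the edge factors truncated by the rank-one projection $\left[\begin{smallmatrix}1&0\\0&0\end{smallmatrix}\right]$, which expands $\log|f_N^a|$ directly as a sum of norms of block products and removes the need for any separate resonance exclusion. With those two repairs your outline matches the known proof; the remaining ingredients you invoke (single-scale subharmonic LDT, uniform upper bounds, Diophantine summation over scales) are indeed the ones the references use.
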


\begin{corollary}\label{cor:ldt-determinant}
	Let $ (\omega,E)\in \T_{c,\alpha}\times\C $ such that $ L(\omega,E)>\gamma>0 $.
	For any $ H>0 $, $ N\ge N_0(a,b,E,\omega,\gamma) $, 	and $ |y|\le 1/N $ we
	have
	\begin{equation*}
		\mes \{ x\in \T: \left| \log|f_N^a(x+iy,\omega,E)|-NL^a(\omega,E) \right|
			>H(\log N)^{C_0} \}\le C_1 \exp(-H),
	\end{equation*}
	with $ C_0=C_0(\omega) $ and $ C_1=C_1(a,b,E,\omega,\gamma) $. Furthermore, the same estimate holds for
	all the other entries of $ M_N^a(x+iy,\omega,E) $.
\end{corollary}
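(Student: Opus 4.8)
\emph{Proof proposal.} The idea is to obtain the corollary from \cref{prop:ldt-determinant} applied on a thin sub-strip of $\H_{\rho_0}$ around the real axis, the only genuinely new ingredient being that for $|y|\le 1/N$ the quantity $L^a(y,\omega,E)$ differs from $L^a(\omega,E)=L^a(0,\omega,E)$ by $O(1/N)$, which is negligible at scale $N(\log N)^{C_0}$.

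First I would note that $y\mapsto L^a(y,\omega,E)$ is convex, hence continuous and locally Lipschitz, on $(-\rho_0,\rho_0)$. Indeed, $M_N^a(\cdot,\omega,E)$ is analytic on $\H_{\rho_0}$, so $\log\norm{M_N^a(\cdot,\omega,E)}$ is subharmonic there, and therefore its average $L_N^a(y,\omega,E)$ over a period in $x$ is convex in $y$; the pointwise limit $L^a(y,\omega,E)=\lim_N L_N^a(y,\omega,E)$ inherits convexity. The same remark applies to $D(y)=\int_\T\log|b(x+iy)|\,dx$, and then the off-axis form of \cref{eq:L-La}, namely $L(y,\omega,E)=-D(y)+L^a(y,\omega,E)$ (which follows from \cref{eq:Mu-Ma} exactly as in the case $y=0$), shows that $y\mapsto L(y,\omega,E)$ is continuous on $(-\rho_0,\rho_0)$. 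Since $L(\omega,E)=L(0,\omega,E)>\gamma$, continuity yields some $\rho_1=\rho_1(a,b,E,\omega,\gamma)\in(0,\rho_0)$ with $L(y,\omega,E)>\gamma/2$ for all $y\in(-\rho_1,\rho_1)$.

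Next I would apply \cref{prop:ldt-determinant} with $\rho_0$ replaced by $\rho_1$ and $\gamma$ replaced by $\gamma/2$ --- this is legitimate because $a,b$ are analytic on $\H_{\rho_1}\subset\H_{\rho_0}$ and the constant $C_0=C_0(\omega)$ is unchanged. It gives, for $N\ge N_0$ and $|y|<\rho_1$,
\[
	\mes\{x\in\T:\ |\log|f_N^a(x+iy,\omega,E)|-NL^a(y,\omega,E)|>H(\log N)^{C_0}\}\le C_1 e^{-H},
\]
and likewise for the remaining entries of $M_N^a$. To pass from $NL^a(y,\omega,E)$ to $NL^a(\omega,E)$, let $K=K(a,b,E,\omega,\gamma)$ be a Lipschitz constant for $L^a(\cdot,\omega,E)$ on $[-\rho_1/2,\rho_1/2]$; then for $|y|\le 1/N$ and $N\ge 2/\rho_1$ we have $N|L^a(y,\omega,E)-L^a(\omega,E)|\le K$, so by the triangle inequality the set where $|\log|f_N^a(x+iy,\omega,E)|-NL^a(\omega,E)|>H(\log N)^{C_0}+K$ is contained in the set above, hence has measure at most $C_1 e^{-H}$.

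Finally, the $O(1)$ discrepancy is absorbed by a cosmetic adjustment of constants: once $N$ is large enough (in terms of $a,b,E,\omega,\gamma$) we have $K\le(\log N)^{C_0}$, so $H(\log N)^{C_0}+K\le(H+1)(\log N)^{C_0}$; renaming $H+1$ as the free parameter, replacing $C_1$ by $e\max(C_1,1)$, and noting that for $0<H<1$ the asserted bound is trivial, we recover precisely the statement of the corollary. Since all the analytic substance is contained in \cref{prop:ldt-determinant}, there is essentially no obstacle here; the only point needing care is verifying that \cref{prop:ldt-determinant} may be invoked on $\H_{\rho_1}$ with the stated dependence of the constants.
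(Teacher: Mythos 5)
Your proposal is correct and follows essentially the same route as the paper: apply \cref{prop:ldt-determinant} at height $y$ (on a thin strip around $\T$ where positivity of the Lyapunov exponent persists) and then replace the centering $NL^a(y,\omega,E)$ by $NL^a(\omega,E)$, absorbing the discrepancy into $H(\log N)^{C_0}$ by adjusting constants. The only (harmless) difference is in how that discrepancy is controlled: the paper's appendix quotes the finite-scale estimates $0\le L_N^a(y,\omega,E)-L^a(y,\omega,E)<C(\log N)^2/N$ and $|L_N^a(y,\omega,E)-L_N^a(\omega,E)|\le C|y|$ from \cite{BV-13-estimate}, yielding $|NL^a(y,\omega,E)-NL^a(\omega,E)|\le C(N|y|+(\log N)^2)$, whereas you derive an $O(1)$ bound for $|y|\le 1/N$ from convexity, hence local Lipschitz continuity, of $y\mapsto L^a(y,\omega,E)$; both suffice.
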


The previous two results are slightly modified versions of
\cite[Prop. 2.1]{BV-14-On-optimal-sepa}. We discuss the modifications
in \cref{sec:appendix}. We will only work with \cref{cor:ldt-determinant}, but we need
\cref{prop:ldt-determinant} to justify the following estimate for the integrability of of the entries of
$ M_N^a $.

\begin{corollary}\label{cor:integrability}
	Let $ (\omega,E)\in \T_{c,\alpha}\times\C $ such that $ L(y,\omega,E)>\gamma>0 $,
	$ y\in(-\rho_0,\rho_0) $. There exists a constant
	$ C_0=C_0(a,b,\omega,E,\gamma) $ such that
	\begin{equation*}
		\norm{\log|f_N^a(\cdot,\omega,E)|}_{L^p(\H_{\rho_0})}\le C_0 N p,\ p\ge 1.
	\end{equation*}
	The same estimate hold for all the other entries of $ M_N^a(\cdot,\omega,E) $.
\end{corollary}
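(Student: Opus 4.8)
The plan is to reduce the claim to a bound on each horizontal slice $\T+iy$ that is uniform in $y\in(-\rho_0,\rho_0)$, and then integrate in $y$. Fix such a $y$ and let $g$ be any one of the entries of $M_N^a(\cdot,\omega,E)$ (so $g=f_N^a(\cdot,\omega,E)$ is allowed). The first step is to record that $L^a(y,\omega,E)$ is bounded uniformly in $y$: it is bounded above because the entries of $M_N^a(z)$ are $O\big(C(a,b,E)^N\big)$ on $\H_{\rho_0}$, so $L_N^a(y)\le\log C(a,b,E)$ by subadditivity; and it is bounded below because the analogue of \cref{eq:L-La} at height $y$ (obtained by integrating \cref{eq:Mu-Ma} in $x$ and using translation invariance) gives $L^a(y,\omega,E)=L(y,\omega,E)+D(y)\ge\gamma+\inf_{|y|<\rho_0}D(y)$, where $D(y)$ is finite and bounded on $(-\rho_0,\rho_0)$, being, up to a linear term, the convex horizontal average of the subharmonic function $\log|b|$. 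Hence $|NL^a(y,\omega,E)|\le C(a,b,\omega,E,\gamma)\,N$.

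Next, assume $N\ge N_0(a,b,E,\omega,\gamma)$ and set $h(x):=\log|g(x+iy)|-NL^a(y,\omega,E)$. Applying \cref{prop:ldt-determinant} (in the version valid for all entries of $M_N^a$) with $H=t/(\log N)^{C_0}$ yields, for every $t>0$,
\[
\mes\{x\in\T:|h(x)|>t\}\le\min\!\big(1,\ C_1\exp(-t/(\log N)^{C_0})\big).
\]
Writing $s:=(\log N)^{C_0}$ and using the layer-cake formula,
\[
\int_\T|h(x)|^p\,dx=p\int_0^\infty t^{p-1}\,\mes\{|h|>t\}\,dt\le (s\log C_1)^p+C_1\,p!\,s^p\le (C\,s\,p)^p,
\]
where one splits the integral at $t=s\log C_1$, bounds the part $t\le s\log C_1$ by $\mes\le1$ and the part $t>s\log C_1$ by the exponential tail together with $\int_0^\infty t^{p-1}e^{-t/s}\,dt=s^p\,\Gamma(p)$, and then uses $p!\le p^p$; the constant $C$ depends only on $C_1$. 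Choosing $N_0$ large enough that $(\log N)^{C_0}\le N$ for $N\ge N_0$ gives $\|h\|_{L^p(\T)}\le C\,s\,p\le C\,N\,p$, and combining this with the first step,
\[
\big\|\log|g(\cdot+iy)|\big\|_{L^p(\T)}\le\|h\|_{L^p(\T)}+N\,|L^a(y,\omega,E)|\le C\,N\,p,
\]
with $C=C(a,b,\omega,E,\gamma)$ independent of $y$ and of $N\ge N_0$.

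Finally, integrating in $y$ over $(-\rho_0,\rho_0)$ and using Fubini,
\[
\big\|\log|g|\big\|_{L^p(\H_{\rho_0})}^p=\int_{-\rho_0}^{\rho_0}\big\|\log|g(\cdot+iy)|\big\|_{L^p(\T)}^p\,dy\le 2\rho_0\,(C\,N\,p)^p,
\]
so $\big\|\log|g|\big\|_{L^p(\H_{\rho_0})}\le(2\rho_0)^{1/p}C\,N\,p\le C'\,N\,p$, which is the assertion for $N\ge N_0$. The remaining scales $N<N_0$ are handled directly and routinely: by \cref{eq:Ma-fa} each entry of $M_N^a$ restricts on $\T+iy$ to a non-trivial trigonometric polynomial in $x$ of degree $O(N d_0)$, which one writes as $e^{2\pi i Kx}P(e^{2\pi i x})$ with $\deg P=O(N d_0)$ and factors over its roots; using $\|\log|e^{2\pi i\,\cdot}-\zeta|\|_{L^p(\T)}\lesssim p$ uniformly in $\zeta$ and the elementary lower bound $\exp\!\int_\T\log|g(\cdot+iy)|\gtrsim 2^{-O(Nd_0)}\max_k|\widehat{g(\cdot+iy)}(k)|$ for the leading coefficient, one again gets $\|\log|g(\cdot+iy)|\|_{L^p(\T)}\lesssim N\,p$ with constants allowed to depend on $N_0$ (hence on $a,b,\omega,E,\gamma$), and integrates in $y$ as before; we omit the details. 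Given \cref{prop:ldt-determinant}, the only real point is controlling the negative part of $\log|g|$ near the zeros of $g$ — supplied on large scales by the large deviation estimate and on small scales by the explicit trigonometric-polynomial structure — so the main thing to be careful about is the uniformity in $y$, i.e. the boundedness of $D(y)$ on the open strip; everything else is a routine distribution-function computation.
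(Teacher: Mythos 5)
Your proof is correct and follows exactly the route the paper intends (the paper gives no explicit proof, but states that \cref{prop:ldt-determinant} is what justifies the corollary): the strip-uniform large deviation estimate gives exponential tails for $\log|f_N^a(\cdot+iy)|-NL^a(y,\omega,E)$, the layer-cake computation converts these into $L^p(\T)$ bounds of order $(\log N)^{C_0}\,p\lesssim Np$, and the uniform-in-$y$ control of $|L^a(y,\omega,E)|$ via $L^a(y)=L(y)+D(y)$ with $D$ convex and bounded on the strip lets you integrate in $y$. The appended treatment of the scales $N<N_0$ is unnecessary (the corollary implicitly carries the threshold from the large deviation estimate, which is all that is used later) and is the only slightly shaky part, since it presumes the restricted entries are non-trivial trigonometric polynomials and does not cover general analytic $a,b$.
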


We will be interested in the number of zeroes of $ f_N^a $ in a small disk. The reason for this is the
following consequence of the Cartan estimate. See \cref{sec:appendix} for the proof.

\begin{lemma}\label{lem:ldt-Cartan}
	Let $ (\omega,E)\in \T_{c,\alpha}\times\C $ be such that $ L(\omega,E)>\gamma>0 $.
	If $ \zeta_j $, $ j=1,\ldots,k_0 $ are the zeros of $ f_N^a $ in $ \D(z_0,r_0) $
	(counting multiplicities), $ |z_0|\ll 1/N $, $ r_0\ll 1/N $, then
	\begin{equation*}
		\log|f_N^a(z,\omega,E)|>NL^a(\omega,E)-(\log r_0)^2(\log N)^{C_0}+k_0\min_j \log|z-\zeta_j|
			,~z\in \D(z_0,r_0/2),
	\end{equation*}
	with $ C_0=C_0(a,b,E,\omega,\gamma) $,
	provided $ N\ge N_0(a,b,E,\omega,\gamma,k_0) $.
	Furthermore, the same estimate holds for
	all the other entries of $ M_N^a(z,\omega,E) $.
\end{lemma}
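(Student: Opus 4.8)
The plan is to combine the large deviations estimate from \cref{cor:ldt-determinant} with a Cartan-type lower bound for analytic functions that are large in $L^\infty$ but small at finitely many points. First I would fix the disk $\D(z_0, r_0)$ and restrict attention to $f_N^a(\cdot,\omega,E)$ as an analytic function on a slightly larger disk, say $\D(z_0, 2r_0)$, which is still contained in a disk of radius $\ll 1/N$ about the origin. The factorization I want to exploit is $f_N^a(z) = P(z) g(z)$, where $P(z) = \prod_{j=1}^{k_0}(z-\zeta_j)$ is the monic polynomial carrying the zeros inside $\D(z_0, r_0)$, and $g$ is analytic and \emph{nonvanishing} on $\D(z_0, r_0)$ (or at least on $\D(z_0, r_0/2)$ after slightly shrinking). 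Then $\log|g| = \log|f_N^a| - \log|P|$ is harmonic on the smaller disk.

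Next I would produce an upper bound for $\log|g|$ on $\D(z_0, r_0/2)$ and a matching lower bound via Harnack / the Borel–Carathéodory inequality. For the upper bound: on $\partial \D(z_0, 2r_0)$ one has $|z - \zeta_j| \le C r_0$ for each $j$, so $\log|P(z)| \le k_0 \log(Cr_0)$ up to a harmless additive constant, whence $\sup_{\partial\D(z_0,2r_0)}\log|g| \le \sup_{\partial\D(z_0,2r_0)}\log|f_N^a| + O(k_0|\log r_0|)$. The key input is that $\sup_{\partial\D(z_0,2r_0)}\log|f_N^a|$ is controlled: since $\D(z_0,2r_0)$ has radius $\ll 1/N$, I can cover it by finitely many points where \cref{cor:ldt-determinant} applies with a suitable choice of $H$ (of order $(\log r_0)^2$, or more precisely chosen so that the exceptional set of measure $\le C_1 e^{-H}$ is small relative to $r_0$), and then use a standard argument — e.g. an a priori bound on $|f_N^a|$ from \cref{cor:integrability} together with a Bernstein-type estimate, or a deterministic norm bound on the entries of $M_N^a$ — to pass from control in measure on $\T$ to a pointwise $L^\infty$ bound on the disk via a subharmonicity/Cartan argument. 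This yields $\log|f_N^a(z)| \le NL^a(\omega,E) + (\log r_0)^2(\log N)^{C_0}$ on $\partial\D(z_0,2r_0)$, hence the same bound (up to constants absorbed into $C_0$) for $\log|g|$.

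With $\log|g|$ bounded above on $\partial\D(z_0,2r_0)$, Harnack's inequality for the positive harmonic function $\big(NL^a + (\log r_0)^2(\log N)^{C_0}\big) - \log|g|$ gives a lower bound $\log|g(z)| \ge NL^a(\omega,E) - C(\log r_0)^2(\log N)^{C_0}$ on $\D(z_0,r_0/2)$; here I would also need a crude lower bound for $\log|g|$ at a \emph{single} reference point inside the disk to anchor Harnack, which again comes from \cref{cor:ldt-determinant} (choosing the reference point outside the exceptional set and away from the $\zeta_j$). Finally, trivially $\log|P(z)| = \sum_j \log|z-\zeta_j| \ge k_0 \min_j \log|z-\zeta_j|$ (using $|z-\zeta_j|\le 1 $, which holds since $r_0 \ll 1/N \ll 1$), and adding the two estimates gives exactly
\begin{equation*}
\log|f_N^a(z,\omega,E)| > NL^a(\omega,E) - (\log r_0)^2(\log N)^{C_0} + k_0 \min_j \log|z-\zeta_j|,\quad z\in\D(z_0,r_0/2).
\end{equation*}
The same argument applies verbatim to the other entries of $M_N^a$ since \cref{cor:ldt-determinant} covers them. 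I expect the main obstacle to be the $L^\infty$ upper bound on $\partial\D(z_0,2r_0)$: converting the measure-theoretic large-deviations bound on $\T$ into a genuine pointwise bound on a small complex disk requires care — one must use analyticity (Cartan's lemma on the exceptional set, or a Bernstein/Remez-type inequality tied to the scale $1/N$) rather than a naive covering, and it is here that the $(\log r_0)^2$ loss and the precise choice of $H$ in \cref{cor:ldt-determinant} enter.
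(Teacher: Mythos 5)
Your argument is correct in substance but follows a genuinely different route from the paper. The paper also starts from an LDT-good anchor point $z_1$ with $|z_1-z_0|\ll r_0$ (from \cref{cor:ldt-determinant} with $H\sim|\log r_0|$) and the uniform upper bound, but then it invokes Cartan's estimate on $\D(z_1,100r_0)$ to get the lower bound $NL^a-(\log r_0)^2(\log N)^{C}$ off a union of disks of tiny total radius, chooses a circle $\partial\D(z_0,r)$, $r\in(r_0/2,r_0)$, avoiding that exceptional set, and applies the minimum modulus principle to $f_N^a(z)/\prod_j(z-\zeta_j)$ to propagate the bound to the whole disk with the $k_0\min_j\log|z-\zeta_j|$ correction. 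You instead divide out the zeros first and run Harnack (equivalently Borel--Carath\'eodory) on the zero-free quotient $g=f_N^a/P$ in $\D(z_0,r_0)$, which avoids Cartan's lemma and any exceptional set altogether, and in fact yields a slightly better loss of order $|\log r_0|(\log N)^{C_0}+k_0|\log r_0|$, absorbed into $(\log r_0)^2(\log N)^{C_0}$ since $N\ge N_0(k_0)$; this is the same mechanism the paper itself uses later via \cref{lem:abstract-Harnack}. Three small repairs: (i) the $L^\infty$ upper bound you flag as the main obstacle requires no measure-to-pointwise conversion --- it is exactly \cref{prop:uniform-upper-bound}, which applies on $\{x\in\T,\ |y|\le N^{-1}\}\supset\D(z_0,2r_0)$ because $|z_0|,r_0\ll 1/N$; (ii) on $\partial\D(z_0,2r_0)$ the estimate you need is the lower bound $|z-\zeta_j|\ge r_0$ (giving $\log|P|\ge k_0\log r_0$ and hence $\sup\log|g|\le\sup\log|f_N^a|+k_0|\log r_0|$), not the upper bound $|z-\zeta_j|\le Cr_0$ you quoted, though your stated conclusion is the correct one; (iii) since the LDT-good anchor need not be the center $z_0$, apply Harnack in its two-interior-point form on $\D(z_0,r_0)$ (both points in, say, $\D(z_0,3r_0/4)$), after noting that $M'-\log|g|\ge 0$ on all of $\D(z_0,r_0)$ by the maximum principle applied to the analytic function $g$ on $\D(z_0,2r_0)$.
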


The importance of the above result is that it provides an essentially optimal lower bound without any
exceptional set. We will also need the following analogous result for $ b $ and $ \t b $.

\begin{lemma}\label{lem:b-Cartan}
	Let $ \omega\in \T_{c,\alpha} $. If $ \zeta_j $, $ j=1,\ldots,k_0 $ are the zeros of $ b $ in $ \D(z_0,r_0) $
	(counting multiplicities), $ |z_0|\ll 1/N $, $ r_0\ll 1/N $, then
	\begin{equation*}
		\log|b(z)|>D-C_0(\log r_0)^2+k_0\min_j \log|z-\zeta_j|
			,~z\in \D(z_0,r_0/2),
	\end{equation*}
	with $ C_0=C_0(b,\omega) $. Furthermore, the same estimate holds for $ \t b $.
\end{lemma}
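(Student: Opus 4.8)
The plan is to treat this as a purely local statement about the \emph{fixed} analytic function $b$: it is essentially a sharp lower bound for $b$ in terms of the distance to its zero set, and because the leading term $D$ is bounded in terms of $b$ while the error $(\log r_0)^2$ is large for small $r_0$, there is ample room to absorb every $b$-dependent constant. In particular, unlike the situation for $f_N^a$ in \cref{lem:ldt-Cartan}, no large-deviation input and no Cartan/Harnack machinery is needed.

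First I would record the relevant compactness. Since $b$ is analytic on $\H_{\rho_0}$, $1$-periodic, and not identically zero, its restriction to the compact cylinder $\{z:|\Im z|\le\rho_0/2\}/\Z$ has only finitely many zeros; let $\sigma_b>0$ be the minimal distance between two distinct zeros, $m_b$ the maximal multiplicity, and note that $D=\int_\T\log|b(x)|\,dx$ is finite. Because $|z_0|,r_0\ll1/N$, for the (large) $N$ of interest the disk $\D(z_0,r_0/2)$ lies well inside $\H_{\rho_0}$ and $r_0$ is below any prescribed threshold depending on $b$; I set $R:=\tfrac1{10}\min(\sigma_b,\rho_0)$ and assume $r_0<R$, so that any disk of radius $\le 3R$ centred near $z_0$ meets at most one distinct zero $\zeta^*$ of $b$. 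Near such a $\zeta^*$ one has a factorization $b(z)=(z-\zeta^*)^{\mu^*}g(z)$ with $g$ analytic and nonvanishing on a $2R$-neighbourhood of $\zeta^*$, hence with $|g|$ bounded below there by a constant $c_b>0$ depending only on $b$ (there being finitely many candidate $\zeta^*$).

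Then I would split into cases. (i) If $\D(z_0,r_0)$ contains a zero of $b$, the nonvanishing of $g$ forces that zero to be a single point $\zeta^*$ carried with its full multiplicity, so $k_0=\mu^*$ and $\zeta_1=\dots=\zeta_{k_0}=\zeta^*$; then for $z\in\D(z_0,r_0/2)$ the factorization gives $\log|b(z)|=k_0\log|z-\zeta^*|+\log|g(z)|\ge k_0\min_j\log|z-\zeta_j|+\log c_b$. (ii) If $\D(z_0,r_0)$ contains no zero of $b$ but the nearest zero $\zeta^*$ still lies within $R$ of $z_0$, then for $z\in\D(z_0,r_0/2)$ one has $|z-\zeta^*|\ge r_0/2$, and the same factorization gives $\log|b(z)|\ge\mu^*\log(r_0/2)+\log c_b\ge m_b\log(r_0/2)+\log c_b$. (iii) Otherwise every zero of $b$ is at distance $>R/2$ from $z$, so $z$ lies in a compact set on which $b$ is bounded away from zero, whence $\log|b(z)|\ge-C_b$. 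Since $r_0\ll1/N$ forces $r_0<e^{-1}$ and hence $(\log r_0)^2\ge1$, one can absorb $D$, $\log c_b$, $C_b$ and $m_b\log2$ into a single constant $C_0=C_0(b)$ so that in all three cases $\log|b(z)|>D-C_0(\log r_0)^2+k_0\min_j\log|z-\zeta_j|$, with the last term absent when $k_0=0$. The same argument applies verbatim to $\t b(z)=\overline{b(\bar z)}$, which is analytic, not identically zero, satisfies $\int_\T\log|\t b(x)|\,dx=\int_\T\log|b(x)|\,dx=D$, and has zeros the complex conjugates of those of $b$; so the identical estimate holds for $\t b$.

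I do not foresee a real obstacle. The only mildly delicate point is the bookkeeping in case (ii), where the nearest zero of $b$ can sit just outside $\D(z_0,r_0)$ and one must keep careful track of the distances $|z-\zeta^*|$, $|z_0-\zeta^*|$, $\sigma_b$, $\rho_0$ and $R$ to stay inside the nonvanishing neighbourhood of $g$; but since $b$ is fixed and the $(\log r_0)^2$ error is very generous, this is routine rather than subtle.
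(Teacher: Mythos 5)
Your proposal is correct, but it takes a genuinely different route from the paper's. The paper dispatches this lemma in one line at the end of \cref{sec:appendix}: it is proved \emph{analogously to \cref{lem:ldt-Cartan}}, i.e.\ one uses the large deviations estimate for subharmonic functions \cite{GS-01-Holder} to find a point $z_1$ within distance $\ll r_0$ of $z_0$ with $\log|b(z_1)|\ge D-C|\log r_0|$, then applies Cartan's estimate (\cref{lem:Cartan}) on $\D(z_1,100r_0)$ with $M\le C_b$, and finally the minimum principle to peel off the factor $\prod_j(z-\zeta_j)$; nothing about the zero set of $b$ beyond subharmonicity and an upper bound is used. You instead exploit that $b$ is a fixed, $1$-periodic analytic function independent of $N,E,\omega$: finitely many zeros per period in a substrip, a minimal separation $\sigma_b$, the local factorization $b(z)=(z-\zeta^*)^{\mu^*}g(z)$ with $|g|\ge c_b$ by compactness, and a three-case analysis. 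The bookkeeping does work out: with $r_0<\sigma_b/10$ at most one distinct zero meets $\D(z_0,r_0)$, so in case (i) indeed $k_0=\mu^*$ and $\min_j\log|z-\zeta_j|=\log|z-\zeta^*|$, while in cases (ii)--(iii) one has $k_0=0$ and $\log|b(z)|\ge -C_b+ m_b\log(r_0/2)$, all absorbed since $(\log r_0)^2\ge|\log r_0|\ge1$. What each approach buys: yours is elementary and self-contained, gives $C_0=C_0(b)$ with no $\omega$-dependence, and actually a much stronger error ($O_b(|\log r_0|)$ rather than $(\log r_0)^2$); the paper's keeps the treatment of $b$ uniform with that of $f_N^a$ (same LDT--Cartan--minimum-principle machinery) and applies even when one only controls the function through subharmonic-type bounds rather than a fixed finite zero configuration. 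The one caveat in your write-up is the standing assumption $r_0<R(b)$, which is not literally implied by $r_0\ll1/N$ unless $N$ (equivalently, the implicit constant) is taken large depending on $b$; this is harmless where the lemma is used in the proof of \cref{thm:main}, since there $N\ge N_0(a,b,\dots)$ and $r_0$ is super-polynomially small in $N$, but you should state that threshold explicitly.
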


It is possible to count the number of zeros of $ f_N^a $ in a small disk via the Jensen formula (see for
example \cite[Sec. 2.3]{Lev-96-Lectures}). Such a straightforward approach yields the following estimate.
We will use the notation
\begin{equation*}
	\nu_f(z_0,r)= \left| \{ z\in \D(z_0,r): f(z)=0 \} \right|.
\end{equation*}
\begin{proposition}\label{prop:zero-count-log}(\cite[Thm. 4.13]{BV-13-estimate})
	Let $ (\omega,E)\in \T_{c,\alpha}\times \C $. There exist constants $ C_0=C_0(a,b,\omega,E,\gamma) $
	and $ N_0=N_0(a,b,\omega,E,\gamma) $ such that
	\begin{equation*}
	 \left|\nu_{f_N^a(\cdot,\omega,E)}(x_0,1/N)\right|\le (\log N)^{C_0},
	\end{equation*}
	for any $ N\ge N_0 $ and $ x_0\in \T $.
\end{proposition}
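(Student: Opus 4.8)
The plan is to deduce the estimate from Jensen's formula on a small disk centred at a point where $\log|f_N^a|$ is comparable to its typical value $NL^a(\omega,E)$. Throughout we use that the dependence of the constants on $\gamma$ puts us in the regime $L(\omega,E)>\gamma>0$, and we write $L^a:=L^a(\omega,E)$, $u(z):=\log|f_N^a(z,\omega,E)|$. First I would fix the centre: by \cref{cor:ldt-determinant} with $H=(\log N)^2$, the set $\{x\in\T:u(x)<NL^a-(\log N)^{C_0}\}$ has measure at most $C_1\exp(-(\log N)^2)\ll 1/N$ for large $N$, so there is a point $x_0'$ with $|x_0'-x_0|\le\frac1{10N}$ and
\[
	u(x_0')\ge NL^a-(\log N)^{C_0},
\]
in particular $f_N^a(x_0')\ne0$. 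Since $\D(x_0,1/N)\subset\D(x_0',\rho)$ with $\rho:=\frac{11}{10N}$, it suffices to bound $\nu_{f_N^a}(x_0',\rho)$.

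The crucial step is an upper bound for a \emph{radially averaged} circle average of $u$. I claim that for every $x_1\in\T$,
\[
	g(x_1):=\frac1\rho\int_{2\rho}^{3\rho}\Bigl(\frac1{2\pi}\int_0^{2\pi}u(x_1+re^{i\theta})\,d\theta\Bigr)dr=\int u(z)\,W(z)\,dm(z)\le NL^a+(\log N)^{C_0},
\]
where $m$ is planar Lebesgue measure and $W(z)=\bigl(2\pi\rho|z-x_1|\bigr)^{-1}\mathbf{1}_{\{2\rho<|z-x_1|<3\rho\}}$ is a \emph{fixed} nonnegative kernel with $\int W\,dm=1$, $\|W\|_\infty\asymp N^2$, supported in $\{|\Re z-x_1|\le3\rho,\ |\Im z|\le3\rho\}$, a set of measure $\asymp N^{-2}$ on which $|\Im z|\le 4/N$. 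Because $\int NL^a\,W\,dm=NL^a$, it is enough to bound $\int(u-NL^a)_+\,W\,dm$. For $|y|\le4/N$ one has $L(y,\omega,E)>\gamma/2$ (by continuity of $y\mapsto L(y,\omega,E)$, taking $N$ large), so the large deviation estimate (\cref{cor:ldt-determinant} for $|y|\le1/N$, and \cref{prop:ldt-determinant} together with $L^a(y,\omega,E)=L^a+O(1/N)$ for $1/N<|y|\le4/N$) with $H=(\log N)^2$ gives $\mes\{x:u(x+iy)>NL^a+(\log N)^{C_0}\}\le C_1\exp(-(\log N)^2)$; intersecting with $\operatorname{supp}W$ and integrating in $y$ yields a ``bad set'' $B$ with $m(B)\le CN^{-1}\exp(-(\log N)^2)$. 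On $\operatorname{supp}W\setminus B$ we have $(u-NL^a)_+\le(\log N)^{C_0}$, while on $B$ we use the trivial bound $u\le C_2N$ on $\H_{\rho_0/2}$ (or \cref{cor:integrability}) to get $(u-NL^a)_+\le CN$. Hence
\[
	\int(u-NL^a)_+\,W\,dm\le(\log N)^{C_0}+CN\cdot\|W\|_\infty\cdot m(B)\le(\log N)^{C_0}
\]
for large $N$, which proves the claim.

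To conclude, apply the claim with $x_1=x_0'$: since $g(x_0')$ is the average over $r\in[2\rho,3\rho]$ of the circle averages, there is $r^*\in[2\rho,3\rho]$ with $\frac1{2\pi}\int_0^{2\pi}u(x_0'+r^*e^{i\theta})\,d\theta\le NL^a+(\log N)^{C_0}$. As $f_N^a$ is analytic on $\D(x_0',r^*)\subset\H_{\rho_0}$ and $f_N^a(x_0')\ne0$, Jensen's formula (see \cite[Sec. 2.3]{Lev-96-Lectures}) together with the lower bound $u(x_0')\ge NL^a-(\log N)^{C_0}$ gives
\[
	\int_0^{r^*}\frac{\nu_{f_N^a}(x_0',t)}{t}\,dt=\frac1{2\pi}\int_0^{2\pi}u(x_0'+r^*e^{i\theta})\,d\theta-u(x_0')\le(\log N)^{C_0}.
\]
Since $r^*\ge2\rho$, the left-hand side is at least $\nu_{f_N^a}(x_0',\rho)\log(r^*/\rho)\ge\nu_{f_N^a}(x_0',\rho)\log2$, so $\nu_{f_N^a}(x_0,1/N)\le\nu_{f_N^a}(x_0',\rho)\le(\log N)^{C_0}$ after renaming $C_0$, which is \cref{prop:zero-count-log}.

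The only genuine difficulty is the circle-average bound: a single circle of radius $\asymp 1/N$ about $x_0'$ can intersect the exceptional set of the large deviation estimate in an arc of positive measure, so one cannot bound a fixed circle average by $NL^a+(\log N)^{C_0}$; averaging over the radius replaces the circles by an annulus carrying a kernel of size only $\asymp N^2$, and then the superexponentially small exceptional set is harmless even against that kernel. Everything else is bookkeeping, apart from the minor point that $\D(x_0,1/N)$ forces heights slightly above $1/N$, which is why \cref{prop:ldt-determinant} is invoked alongside \cref{cor:ldt-determinant}.
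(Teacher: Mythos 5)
Your argument is correct, and it is essentially the proof the paper has in mind: \cref{prop:zero-count-log} is quoted from \cite[Thm. 4.13]{BV-13-estimate} precisely as the ``straightforward'' Jensen-formula count, i.e.\ a large-deviations lower bound at a nearby centre (\cref{cor:ldt-determinant}) against an upper bound on a surrounding circle, which is what you do. Your radial averaging over $r\in[2\rho,3\rho]$ is a legitimate (and carefully justified) way to handle the heights $1/N<|y|\le 4/N$, where \cref{prop:uniform-upper-bound} is not stated and only the measure-theoretic bounds of \cref{prop:ldt-determinant} (plus $|NL^a(y,\omega,E)-NL^a(\omega,E)|\lesssim N|y|+(\log N)^2$ from the appendix) are available.
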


The proof of the main result hinges on being able to obtain a constant bound on the zeroes, albeit on an
even smaller disk. We will achieve this by using the multiscale counting of zeroes introduced in
\cite[Sec. 9]{GS-11-resonances}. Passing from one scale to the next is done via the Avalanche Principle
(see \cite[Prop. 3.3]{GS-08-Fine}). We will only be using the following particular application of the
Avalanche Principle. We refer to \cite[Cor. 2.7]{BV-14-On-optimal-sepa} for a proof, as the differences
between the results are minor.

\begin{lemma}\label{lem:AP-determinant}
	Let $ (\omega,E)\in\T_{c,\alpha}\times \C $ such that $ L(\omega,E)>\gamma>0 $ and let
	$ A>1$. Let $ \Lambda_j $, $ j=1,\ldots,m $ be pairwise disjoint intervals such that their union
	$ \Lambda $ is also an interval, and $ l\le|\Lambda_j|\le l^A $. Assume that for some
	$ z\in\H_{(2l^A)^{-1}} $
	the large deviations estimate in \cref{prop:ldt-determinant} holds, with some
	$ H\in(0,l(\log l)^{-2C_0}) $,
	 for
	$ f_{\Lambda_j}^a(z,\omega,E) $, $ j=1,\ldots,m $ and $ f_{\Lambda_j\cup\Lambda_{j+1}}^a(z,\omega,E) $,
	$ j=1,\ldots,m-1 $. Then there exists a constant $ l_0(a,b,\omega,E,\gamma,A) $ such that when
	$ l\ge \max(l_0,2\log m/\gamma) $ we have
	\begin{equation*}
		\left| \log|f_\Lambda^a(z)|+\sum_{j=2}^{m-1}\log\norm{A_j(z)}
			- \sum_{j=1}^{m-1}\log\norm{A_{j+1}(z)A_j(z)} \right|\lesssim m\exp(-\gamma l/2),
	\end{equation*}
	where $ A_j(z)=M_{\Lambda_j}^a(z) $, $ j=2,\ldots,m-1 $ and
	\begin{equation*}
		A_1(z)= M_{\Lambda_1}^a(z)\begin{bmatrix}
			1 & 0 \\ 0 & 0
		\end{bmatrix},\quad
		A_m(z)= \begin{bmatrix}
			1 & 0 \\ 0 & 0
		\end{bmatrix}
		M_{\Lambda_m}^a(z).		
	\end{equation*}
	Furthermore, we have
	\begin{equation*}
		\left| \log\norm{M_\Lambda^a(z)}+\sum_{j=2}^{m-1}\log\norm{M_{\Lambda_j}^a(z)}
			- \sum_{j=1}^{m-1}\log\norm{M_{\Lambda_{j+1}}^a(z)M_{\Lambda_j}^a(z)} \right|
			\lesssim m\exp(-\gamma l/2).
	\end{equation*}	
\end{lemma}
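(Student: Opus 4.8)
The plan is to deduce this from the abstract Avalanche Principle of Goldstein and Schlag \cite[Prop.~3.3]{GS-08-Fine}, by the same reduction that underlies \cite[Cor.~2.7]{BV-14-On-optimal-sepa}, with only cosmetic changes. The whole task is to verify the hypotheses of the Avalanche Principle at the single point $z$ and to identify the two displayed left-hand sides with Avalanche Principle defects.

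I would first record the algebra. Relabel the $\Lambda_j$ so that they are consecutive subintervals of $\Lambda$, listed from left to right. The composition law for transfer matrices, together with the telescoping of the normalizing products $\prod_k b(z+k\omega)$, gives $M^a_{\Lambda_{j+1}}(z)M^a_{\Lambda_j}(z)=M^a_{\Lambda_j\cup\Lambda_{j+1}}(z)$ and, iterating, $M^a_{\Lambda_m}(z)\cdots M^a_{\Lambda_1}(z)=M^a_\Lambda(z)$. With $A_j(z)$ as in the statement this yields $A_m(z)\cdots A_1(z)=\left[\begin{smallmatrix}1&0\\0&0\end{smallmatrix}\right]M^a_\Lambda(z)\left[\begin{smallmatrix}1&0\\0&0\end{smallmatrix}\right]$, whose norm is $|f^a_\Lambda(z)|$ by \cref{eq:Ma-fa}. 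Hence the left-hand side of the first displayed inequality is precisely the Avalanche Principle \emph{defect} of the tuple $g_j:=A_j(z)$, $j=1,\dots,m$, and that of the second is the defect of the tuple $g_j:=M^a_{\Lambda_j}(z)$. A short bookkeeping shows that in both identities the sums $\sum_k\log|b(z+k\omega)|$ cancel telescopically, so one may equivalently run the Avalanche Principle on the genuine (unnormalized) transfer matrices $M_{\Lambda_j}(z)$; this is how the singularities of $b$ are disposed of, and why the Jacobi case is no harder than the Schr\"odinger case once the large deviations estimate is available.

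It then remains to verify the Avalanche Principle hypotheses at $z$. Set $\kappa:=H(\log l^A)^{C_0}=A^{C_0}H(\log l)^{C_0}$. The assumed large deviations bounds for the entries of $M^a_{\Lambda_j}(z)$ and of $M^a_{\Lambda_j\cup\Lambda_{j+1}}(z)$, combined with \cref{eq:Ma-fa} to treat the boundary matrices $A_1,A_m$, give $\log\|g_j\|=|\Lambda_j|L^a(\Im z,\omega,E)+O(\kappa)$ and $\log\|g_{j+1}g_j\|=(|\Lambda_j|+|\Lambda_{j+1}|)L^a(\Im z,\omega,E)+O(\kappa)$; subtracting, the ``angle'' factors satisfy $\log\bigl(\|g_{j+1}\|\,\|g_j\|/\|g_{j+1}g_j\|\bigr)=O(\kappa)$, with no dependence on $L^a$. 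Since $H<l(\log l)^{-2C_0}$ and $|\Lambda_j|\le l^A$, we have $\kappa=o(l)$ as $l\to\infty$. The main obstacle is the matching exponential lower bound $\min_j\|g_j(z)\|\ge e^{\gamma l/2}$: this is where one passes to the genuine transfer matrices, uses \cref{eq:Mu-Ma} to relate $\log\|M_{\Lambda_j}(z)\|$ to the controlled quantity $\log\|M^a_{\Lambda_j}(z)\|$, and must combine this carefully with the hypothesis $L(\omega,E)>\gamma$ (and with $\Im z$ small) to absorb the resulting sum of $\log|b|$ values; this is the only step that genuinely uses the positivity of the Lyapunov exponent and the Diophantine condition. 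Once $\mu:=\min_j\|g_j(z)\|\ge e^{\gamma l/2}$ (which, since $\kappa=o(l)$, holds for $l\ge l_0(a,b,\omega,E,\gamma,A)$) and $\mu\ge m$ (which is $l\ge 2\log m/\gamma$), the Avalanche Principle applies and bounds the defect by $\lesssim m/\mu\le m\exp(-\gamma l/2)$; by the telescoping cancellation this bound transfers back verbatim to the expressions involving $M^a$ and $f^a_\Lambda$, giving both displayed inequalities.
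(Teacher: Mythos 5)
Your proposal is correct and is essentially the paper's own route: the paper simply defers to \cite[Cor. 2.7]{BV-14-On-optimal-sepa}, which is exactly this reduction to the Goldstein--Schlag Avalanche Principle via the factorization $M^a_\Lambda=\prod M^a_{\Lambda_j}$, the identification of the two left-hand sides as scale-invariant Avalanche Principle defects, and pointwise bounds at $z$ coming from the assumed deviation estimates together with the uniform upper bounds (\cref{prop:uniform-upper-bound}, \cref{lem:uniform-bound-S_N}) and $L=L^a-D>\gamma$. One minor refinement: at a $z$ where some $b(z+k\omega)=0$ the genuine transfer matrices $M_{\Lambda_j}(z)$ are undefined, so instead of passing to them it is cleaner to exploit the same scale invariance of the defect by rescaling $A_j(z)$ with the constants $e^{-|\Lambda_j|D}$, which gives $\min_j\|g_j(z)\|\ge e^{\gamma l/2}$ directly from the hypothesized lower bounds on $|f^a_{\Lambda_j}(z)|$.
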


It turns out that in conjunction with the Avalanche Principle it is convenient to use the following
double integrals introduced in \cite[Sec. 5]{GS-08-Fine}:
\begin{equation*}
	J_\epsilon(u, z_0, r) =\frac{4}{\epsilon^2}\ \
		\dashint_{\D(z_0, r)}\dashint_{\D(z,\epsilon r)}(u(\zeta)-u(z))\,dA(\zeta)dA(z).
\end{equation*}
We refer to this double integral as a Jensen average. The reason for this is that as a consequence of the
Jensen formula one gets the following estimate.
\begin{lemma} \label{lem:Jensen-avg-analytic}
   	(\cite[Lem. 5.1]{GS-08-Fine})
    Let $f(z)$ be analytic in $\cD(z_0, R_0)$. Then for
    any $ r,\epsilon>0 $ such that $ (1+\epsilon)r<R_0 $ we have
    \begin{equation*}
        \nu_f(z_0, (1-\epsilon)r) \leq  J_\epsilon(\log |f|, z_0,
        r) \leq \nu_f(z_0, (1+\epsilon) r).
    \end{equation*}
\end{lemma}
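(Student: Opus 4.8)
The plan is to convert the Jensen average into a sum over the zeros of $f$ via the Riesz representation of the subharmonic function $\log|f|$. Since $(1+\epsilon)r<R_0$, first fix $R_1$ with $(1+\epsilon)r<R_1<R_0$; then $\overline{\D(z_0,R_1)}$ is a compact subset of $\D(z_0,R_0)$, so $f$ has finitely many zeros $\zeta_1,\dots,\zeta_n$ there (listed with multiplicity), and we may write $\log|f(z)|=h(z)+\sum_{k=1}^n\log|z-\zeta_k|$ on $\D(z_0,R_1)$ with $h$ harmonic. The computational heart is the area-averaged Jensen identity for the elementary potential: for $s>0$ and any $w\in\C$,
\[
	\dashint_{\D(z,s)}\log|\zeta-w|\,dA(\zeta)-\log|z-w|=\Phi_s(|z-w|),
\]
where $\Phi_s(\delta)=\log(s/\delta)-\tfrac12+\delta^2/(2s^2)$ for $0<\delta<s$ and $\Phi_s(\delta)=0$ for $\delta\ge s$; this follows from the circular mean $\tfrac{1}{2\pi}\int_0^{2\pi}\log|\rho e^{i\theta}-w|\,d\theta=\log\max(\rho,|w|)$ followed by one radial integration. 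Note that $\Phi_s\ge0$ and $\Phi_s$ is supported on $[0,s)$.

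Applying the mean value property $\dashint_{\D(z,\epsilon r)}h\,dA=h(z)$ to the harmonic part and the identity above to each factor $\log|z-\zeta_k|$ gives, for every $z\in\D(z_0,r)$,
\[
	\dashint_{\D(z,\epsilon r)}\bigl(\log|f(\zeta)|-\log|f(z)|\bigr)\,dA(\zeta)
		=\sum_{k=1}^n\Phi_{\epsilon r}(|z-\zeta_k|),
\]
both sides being allowed the value $+\infty$ on the measure-zero set $\{f=0\}$ (where $\Phi_{\epsilon r}(0)=+\infty$). Integrating this in $z$ over $\D(z_0,r)$, interchanging the sum and the integral by Tonelli ($\Phi\ge0$), and inserting the normalization $4/\epsilon^2$ yields $J_\epsilon(\log|f|,z_0,r)=\sum_{k=1}^n w_k$ with $w_k:=\tfrac{4}{\epsilon^2}\dashint_{\D(z_0,r)}\Phi_{\epsilon r}(|z-\zeta_k|)\,dA(z)$. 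The weights are then estimated by comparing the support disk $\D(\zeta_k,\epsilon r)$ of $\Phi_{\epsilon r}(|\cdot-\zeta_k|)$ with $\D(z_0,r)$, using the elementary value $\int_0^{\epsilon r}\Phi_{\epsilon r}(\delta)\,2\pi\delta\,d\delta=\tfrac{\pi(\epsilon r)^2}{4}$: if $|\zeta_k-z_0|\le(1-\epsilon)r$ then $\D(\zeta_k,\epsilon r)\subset\overline{\D(z_0,r)}$ and $w_k=\tfrac{4}{\epsilon^2}\cdot\tfrac{1}{\pi r^2}\cdot\tfrac{\pi(\epsilon r)^2}{4}=1$; if $|\zeta_k-z_0|\ge(1+\epsilon)r$ then $|z-\zeta_k|\ge\epsilon r$ throughout $\D(z_0,r)$, so $w_k=0$; and in every case $0\le w_k\le\tfrac{4}{\epsilon^2}\cdot\tfrac{1}{\pi r^2}\int_{\R^2}\Phi_{\epsilon r}(|z-\zeta_k|)\,dA(z)=1$.

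The two inequalities then follow by bookkeeping. Keeping only the zeros with $|\zeta_k-z_0|\le(1-\epsilon)r$ and discarding the rest (legitimate since every $w_k\ge0$) gives $J_\epsilon\ge\#\{\,k:|\zeta_k-z_0|\le(1-\epsilon)r\,\}\ge\nu_f(z_0,(1-\epsilon)r)$; and since the zeros with $|\zeta_k-z_0|\ge(1+\epsilon)r$ contribute nothing while every remaining $w_k$ is at most $1$, $J_\epsilon\le\#\{\,k:|\zeta_k-z_0|<(1+\epsilon)r\,\}=\nu_f(z_0,(1+\epsilon)r)$. I do not expect a genuine obstacle: the only points requiring care are (a) passing to the slightly smaller disk $\D(z_0,R_1)$ so that the Riesz decomposition is a finite product and the use of Tonelli is unproblematic, and (b) the trivial boundary bookkeeping on the circles $|z-z_0|=(1\pm\epsilon)r$, which is harmless since these are null sets and $\Phi_{\epsilon r}$ vanishes at its right endpoint. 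The one computation that must come out right is the normalizing integral $\int_0^{\epsilon r}\Phi_{\epsilon r}(\delta)\,2\pi\delta\,d\delta=\tfrac{\pi(\epsilon r)^2}{4}$, which is precisely what makes each interior zero carry weight exactly $1$.
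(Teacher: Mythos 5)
Your proof is correct: the kernel identity $\dashint_{\D(z,s)}\log|\zeta-w|\,dA(\zeta)-\log|z-w|=\Phi_s(|z-w|)$ with $\Phi_s(\delta)=\log(s/\delta)-\tfrac12+\delta^2/(2s^2)\ge 0$ for $\delta<s$, together with the normalization $\int_0^{s}\Phi_{s}(\delta)\,2\pi\delta\,d\delta=\pi s^2/4$, both check out, so each zero in $\D(z_0,(1-\epsilon)r)$ carries weight exactly $1$, zeros outside $\D(z_0,(1+\epsilon)r)$ carry weight $0$, and every weight lies in $[0,1]$, which gives both inequalities. The paper itself does not prove this lemma (it is quoted from the cited source), and your argument is essentially the same standard one: it amounts to specializing \cref{lem:Riesz-Jensen} to the Riesz measure $\mu=\sum_k\delta_{\zeta_k}$ of $\log|f|$ on a slightly smaller disk, with the weights computed explicitly.
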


Finally, we recall the following uniform upper estimates that are essential to the successful use of the
Cartan estimate and the Jensen formula (in conjunction with the deviations estimates).

\begin{proposition}(\cite[Cor. 2.3]{BV-14-On-optimal-sepa})\label{prop:uniform-upper-bound}
	Let $ (\omega_0,E_0)\in \T_{c,\alpha}\times \C $ be such that $ L(\omega_0,E_0)>\gamma>0 $. There exist
	constants $ N_0=N_0(a,b,E_0,\omega_0,\gamma) $, $ C_0=C_0(\omega_0) $, and
	$ C_1=C_1(a,b,E_0,\omega_0,\gamma) $  such that for $ N\ge N_0 $ we have
	\begin{multline*}
		\sup \{ \log\norm{M_N^a(x+iy,\omega,E)}: x\in \T,|E-E_0|,|\omega-\omega_0|\le N^{-C_1},
		|y|\le N^{-1} \}\\ \le NL^a(\omega_0,E_0)+(\log N)^{C_0}.
	\end{multline*}
\end{proposition}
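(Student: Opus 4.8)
The plan is to upgrade the $L^1$‑type control on $M_N^a$ coming from the large deviations estimate to a genuine pointwise bound, using that $\log\norm{M_N^a(\cdot,\omega,E)}$ is subharmonic together with a crude a priori upper bound, and then to remove the dependence on $(\omega,E)$ by continuity of the Lyapunov exponent. First I would record two elementary facts. Canceling the denominators in the transfer‑matrix formula gives
\[ M_N^a(z,\omega,E)=\prod_{j=N-1}^{0}\begin{pmatrix} a(z+j\omega)-E & -\t b(z+j\omega)\\ b(z+(j+1)\omega) & 0\end{pmatrix}, \]
a product of $N$ matrices whose entries are bounded on $\H_{\rho_0/2}$ by a constant depending only on $a,b,E_0$ (for $|E-E_0|\le 1$); hence
\[ u_N(z):=\tfrac1N\log\norm{M_N^a(z,\omega,E)}\le B_0 \quad\text{on }\H_{\rho_0/2}, \]
with $B_0=B_0(a,b,E_0)$ independent of $N$ and $\omega$. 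Moreover $u_N$ is subharmonic on $\H_{\rho_0/2}$, since $\norm{M_N^a(z)}$ is a supremum over unit vectors of moduli of functions analytic in $z$.

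The core step is the $L^1\to L^\infty$ passage. Fix $z_0=x+iy$ with $|y|\le 1/N$ and put $r=c_0/N$ for a small constant $c_0$, so $\D(z_0,r)\subset\H_{\rho_0/2}$. By the area sub‑mean value inequality,
\[ u_N(z_0)\le \frac{1}{\pi r^2}\int_{\D(z_0,r)}u_N\,dA. \]
For each height $y'$ met by this disk, \cref{cor:ldt-determinant} — which applies to every entry of $M_N^a$ — gives a set $\mathcal{B}(y')\subset\T$ with $\mes\mathcal{B}(y')\le C_1 e^{-H}$, outside of which $u_N(x'+iy')\le L^a(\omega,E)+CH(\log N)^{C_0}/N$; on $\mathcal{B}(y')$ I would simply use $u_N\le B_0$. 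Splitting each horizontal chord of $\D(z_0,r)$ accordingly yields
\[ u_N(z_0)\le L^a(\omega,E)+\frac{CH(\log N)^{C_0}}{N}+\frac{C_1 B_0\,e^{-H}}{r}, \]
and choosing $H=3\log N$ (so the last term is $\lesssim N^{-2}$) gives $u_N(z_0)\le L^a(\omega,E)+C(\log N)^{C_0+1}/N$.

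It remains to replace $L^a(\omega,E)$ by $L^a(\omega_0,E_0)$. Since $L^a=L+D$ and $D$ does not depend on $(\omega,E)$, one has $|L^a(\omega,E)-L^a(\omega_0,E_0)|=|L(\omega,E)-L(\omega_0,E_0)|$, which by the Hölder continuity of the Lyapunov exponent in $(\omega,E)$ — available in the positive‑Lyapunov, strong‑Diophantine, analytic regime — is $\lesssim |\omega-\omega_0|^\beta+|E-E_0|^\beta$ for some $\beta=\beta(a,b,\omega_0,\gamma)>0$; taking $C_1\ge 1/\beta$ makes this $\le 1/N$ on the $N^{-C_1}$‑neighborhood of $(\omega_0,E_0)$. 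Multiplying through by $N$ and absorbing the finitely many $N$‑independent constants into the power of $\log N$ (at the cost of enlarging $C_0$) gives $\log\norm{M_N^a(z_0,\omega,E)}\le NL^a(\omega_0,E_0)+(\log N)^{C_0}$ for $N\ge N_0(a,b,E_0,\omega_0,\gamma)$. Two routine points need attention: continuity of $L$ lets us shrink the neighborhood so that $L(\omega,E)>\gamma/2>0$ throughout, making the constants in \cref{cor:ldt-determinant} uniform there; and since \cref{cor:ldt-determinant} is stated for $|y|\le 1/N$ while $\D(z_0,r)$ reaches slightly beyond, I would either first prove the bound for $|y|\le 1/(2N)$, or work with the large deviations estimate on $|y|\lesssim 1/N$ (as in \cref{prop:ldt-determinant}, whose positivity hypothesis is then only needed for $|y|\lesssim 1/N$ and follows from $L(\omega_0,E_0)>\gamma$ and continuity of $y\mapsto L(y,\omega,E)$ at $0$), converting $L^a(y',\omega,E)$ back to $L^a(\omega,E)$ at a cost of $O(1/N)$ via the convexity in $y'$ of $L^a(\cdot,\omega,E)$ — a pointwise limit of the convex functions $L_N^a(\cdot,\omega,E)$ — together with the bound $B_0$.

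The main obstacle is exactly the $L^1\to L^\infty$ passage: a subharmonic function with small circle‑averages can still be large on a set of small measure, so subharmonicity alone does not give the sup bound. What makes it work is that the exceptional set in the large deviations estimate has \emph{exponentially} small measure, so over a disk of radius comparable to $1/N$ — on which the crude bound $B_0$ keeps the bad part harmless — the area average genuinely reflects the good values $\approx L^a(\omega,E)$. Beyond this, the points to be careful about are the uniformity of the large‑deviations constants over the $N^{-C_1}$‑neighborhood and the use of honest Hölder (not merely log‑Hölder) continuity of $L$ in $(\omega,E)$; both are standard inputs in the present setting.
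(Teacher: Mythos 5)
Your pointwise step is fine as far as it goes: for a \emph{fixed} pair $(\omega,E)$ with $\omega\in\T_{c,\alpha}$ and $L(\omega,E)>\gamma$, subharmonicity of $\frac1N\log\norm{M_N^a(\cdot,\omega,E)}$, the sub-mean value inequality on a disk of radius $\sim 1/N$, \cref{cor:ldt-determinant} off an exponentially small set, and the crude a priori bound $B_0$ on the bad set do give $\sup_x\log\norm{M_N^a(x+iy,\omega,E)}\le NL^a(\omega,E)+(\log N)^{C}$, and the height issue you flag is handled exactly as in the appendix via $|NL^a(y,\omega,E)-NL^a(\omega,E)|\le C(N|y|+(\log N)^2)$. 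Keep in mind, though, that the paper does not prove this proposition at all; it imports it from \cite[Cor.~2.3]{BV-14-On-optimal-sepa}, so your argument must stand on its own.

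The genuine gap is the transfer from $(\omega,E)$ to $(\omega_0,E_0)$, which is precisely what distinguishes this statement from the fixed-parameter bound. First, you apply \cref{cor:ldt-determinant} at the perturbed pair: a frequency with $|\omega-\omega_0|\le N^{-C_1}$ need not belong to $\T_{c,\alpha}$ (it may be rational), so the large deviations estimate as stated is simply not available there, and even where it is, you need $N_0$ and the measure constant to be uniform over a neighbourhood shrinking with $N$, which is not part of the quoted statements. Second, and fatally, you invoke H\"older continuity of $L$ in $(\omega,E)$: H\"older continuity in $E$ is a known (heavy) input in this regime, but H\"older continuity in the frequency is not available; the quantitative continuity in $\omega$ one can actually prove (compare with a finite scale $n$, where the crude perturbation bound $e^{Cn}|\omega-\omega_0|$ applies, and use the rate of convergence $0\le L_n-L\lesssim (\log n)^2/n$) only gives a modulus of order $(\log\log(1/\delta))^{2}/\log(1/\delta)$ with $\delta=|\omega-\omega_0|$. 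Multiplying by $N$, this produces an error of order $N(\log\log N)^2/\log N$, vastly larger than the allowed $(\log N)^{C_0}$, so your choice ``$C_1\ge 1/\beta$'' has nothing to rest on in the $\omega$-variable. The way this is actually handled (following the Goldstein--Schlag scheme behind the cited corollary) is to avoid $L^a(\omega,E)$ and the scale-$N$ LDT at the perturbed parameters altogether: one expands the scale-$N$ quantity via the Avalanche Principle (\cref{lem:AP-determinant}) into blocks of length $n\sim C(\gamma)\log N$, perturbs each block crudely with an error $e^{Cn}N^{-C_1}$ that is absorbed by choosing $C_1$ large (this is where $C_1=C_1(a,b,E_0,\omega_0,\gamma)$ comes from, and the finite-scale Diophantine property needed for the blocks is stable under $N^{-C_1}$ perturbations of $\omega_0$), and then reassembles everything at the fixed pair $(\omega_0,E_0)$ using the rate of convergence there. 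Without such a two-scale argument, the step from $NL^a(\omega,E)$ to $NL^a(\omega_0,E_0)$ in your proof does not close.
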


\begin{lemma}\label{lem:uniform-bound-S_N}(\cite[Lem. 2.5]{BV-14-On-optimal-sepa})
Let $ \omega\in \T_{c,\alpha} $. There exist constants $C_{0}=C_{0}\left(\omega\right)$,
$C_{1}=C_{1}\left(b,\omega\right)$ such
that for every $N>1$ we have
\begin{equation*}
	\sup\left\{ S_{N}\left(x+iy,\omega\right):\, x\in\mb T,\,\left|y\right|\le N^{-1}\right\} \le ND
	+C_{1}\left(\log N\right)^{C_{0}}
\end{equation*}
and
\begin{equation*}
	\sup\left\{ \t S_{N}\left(x+iy,\omega\right):\, x\in\mb T,\,\left|y\right|\le N^{-1}\right\} \le ND
	+C_{1}\left(\log N\right)^{C_{0}}.
\end{equation*}
\end{lemma}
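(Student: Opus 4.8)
The plan is to peel off the singularities of $\log|b|$ by a Riesz decomposition and reduce everything to one equidistribution estimate for a product of sines. First, two cheap reductions. Since $\t b(z)=\overline{b(\bar z)}$ we have $|\t b(w)|=|b(\bar w)|$, hence $\t S_N(x+iy,\omega)=S_N(x-iy,\omega)$; as the range $|y|\le N^{-1}$ is symmetric, the two claimed bounds are equivalent, so I only treat $S_N$. Second, $y\mapsto D(y)=\int_\T\log|b(x+iy)|\,dx$ is convex on $(-\rho_0,\rho_0)$ (subharmonicity of $\log|b|$), hence Lipschitz near $0$, so $ND(y)=ND+O_b(1)$ for $|y|\le N^{-1}$; thus it suffices to prove $S_N(x+iy,\omega)\le ND(y)+C(\log N)^{C_0}$. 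On $\{|\Im z|<\rho_0/2\}$ write the Poisson--Jensen decomposition
\begin{equation*}
	\log|b(z)|=h(z)+\sum_{i=1}^{K}m_i\,G(z-\zeta_i),\qquad G(w):=\log|1-e^{2\pi iw}|,
\end{equation*}
where $\zeta_1,\dots,\zeta_K$ are the zeros of $b$ in a fundamental domain of the strip (with multiplicity $m_i$; $K=K(b)<\infty$) and $h$ is harmonic and $1$-periodic there. (In the trigonometric polynomial case $h$ is the explicit affine function coming from $b(z)=c\,e^{-2\pi id_bz}\prod_i(e^{2\pi iz}-w_i)$, and $\zeta_i=-\tfrac{i}{2\pi}\log w_i$.) Summing over the orbit gives $S_N(x+iy)=\Sigma_h(x+iy)+\sum_i m_i\Sigma_i(x+iy)$, with $\Sigma_h(z)=\sum_{k=0}^{N-1}h(z+k\omega)$ and $\Sigma_i(z)=\sum_{k=0}^{N-1}G(z+k\omega-\zeta_i)$.

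The harmonic part is routine: $h$ is bounded on $\{|\Im z|\le\rho_0/4\}$, so its Fourier coefficients in $x$ decay geometrically at a fixed rate uniformly for $|y|\le N^{-1}$, and combined with $\norm{n\omega}\ge c/(n(\log n)^\alpha)$ this yields $\Sigma_h(x+iy)=N\langle h(\cdot+iy)\rangle+O_b(1)$, the error being $\lesssim\sum_{n\ge1}\rho^{n}n(\log n)^\alpha<\infty$. For each singular part put $h_i=\Im\zeta_i$. If $y\le h_i$ then every factor $1-e^{2\pi i(x+iy+k\omega-\zeta_i)}$ has negative imaginary part in its exponent, and pulling out $e^{2\pi i(\cdot)}$ gives
\begin{equation*}
	\Sigma_i(x+iy)=2\pi N(h_i-y)+\log\Big|\prod_{k=0}^{N-1}\big(u-e^{2\pi ik\omega}\big)\Big|,\qquad u=e^{2\pi(y-h_i)}e^{-2\pi i(x-\Re\zeta_i)},
\end{equation*}
with $|u|=e^{-2\pi(h_i-y)}\le1$; when $y\ge h_i$ the analogous identity holds with $2\pi N(h_i-y)$ replaced by $0$ and $e^{2\pi ik\omega}$ by $e^{-2\pi ik\omega}$. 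In either case the polynomial $P(u)=\prod_k(u-e^{\pm2\pi ik\omega})$ has all zeros on the unit circle, so $\log|P|$ is subharmonic and the maximum principle on the closed unit disk gives
\begin{equation*}
	\log|P(u)|\le\max_{\theta\in\T}\sum_{k=0}^{N-1}\log\big|e^{2\pi i\theta}-e^{\pm2\pi ik\omega}\big|=\max_{\theta\in\T}\sum_{k=0}^{N-1}\log\big(2|\sin\pi(\theta-k\omega)|\big).
\end{equation*}
Thus $\Sigma_i(x+iy)\le2\pi N\max(0,h_i-y)+\max_\theta\sum_{k<N}\log(2|\sin\pi(\theta-k\omega)|)$ for every $i$, uniformly in $|y|\le N^{-1}$.

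The crux is therefore the purely arithmetic bound, for $\omega\in\T_{c,\alpha}$,
\begin{equation*}
	\max_{\theta\in\T}\sum_{k=0}^{N-1}\log\big(2|\sin\pi(\theta-k\omega)|\big)\le C(\log N)^{C_0},\qquad C_0=C_0(\omega).
\end{equation*}
I would prove this by a continued-fraction / three-distance argument. From $q_{s+1}^{-1}\gtrsim\norm{q_s\omega}\ge c/(q_s(\log q_s)^\alpha)$ the partial quotients obey $a_{s+1}\le c^{-1}(\log q_s)^\alpha$, so writing $N$ in its Ostrowski expansion splits $\{0,\dots,N-1\}$ into $\lesssim(\log N)^{1+\alpha}$ consecutive blocks, each of some length $q_s$. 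On a block one compares $\{k\omega\}$ with the exactly equispaced set $\{kp_s/q_s\}$, for which $\prod_{k<q_s}2|\sin\pi(\theta-kp_s/q_s)|=2|\sin\pi q_s\theta|\le2$; since $|k\omega-kp_s/q_s|\le\norm{q_s\omega}\le q_{s+1}^{-1}$ for $k<q_s$, the ratio of the two products is $\exp(O(\log q_s))$ once the $O(1)$ indices with $k\omega$ anomalously close to $\theta$ have been removed — which is legitimate because $\{k\omega\}_{k<N}$ has $\lesssim(\log N)^{1+\alpha}$ returns to any interval and separation $\gtrsim c/(N(\log N)^\alpha)$. Each block then contributes $O(\log N)$, giving $O((\log N)^{2+\alpha})$ overall. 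This is the main obstacle: one genuinely needs the cancellation inside the sine product (a term-by-term $L^\infty$ estimate only gives $O(N)$, which is useless here), and extracting it forces the detailed continued-fraction analysis above; the rest of the argument is soft.

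Finally, recombining, $S_N(x+iy)\le N\langle h(\cdot+iy)\rangle+2\pi N\sum_i m_i\max(0,h_i-y)+C_1(\log N)^{C_0}$ with $C_1=C_1(b,\omega)$ (absorbing the $O_b(1)$ and the factor $\sum_i m_i$). Since $\langle G(\cdot+iy-\zeta_i)\rangle=2\pi\max(0,h_i-y)$, the first two terms equal $N\langle\log|b(\cdot+iy)|\rangle=ND(y)$, so $S_N(x+iy)\le ND(y)+C_1(\log N)^{C_0}=ND+C_1(\log N)^{C_0}$ by the Lipschitz reduction, and the estimate for $\t S_N$ follows from $\t S_N(x+iy,\omega)=S_N(x-iy,\omega)$.
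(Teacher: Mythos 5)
The paper does not prove this lemma at all --- it is quoted verbatim from \cite[Lem. 2.5]{BV-14-On-optimal-sepa} --- so there is no in-paper argument to compare against; judged on its own, your proof is correct and follows the standard route used in that literature. The reductions are all sound: $\t S_N(x+iy)=S_N(x-iy)$ handles the second bound, convexity of $D(y)$ gives $ND(y)=ND+O_b(1)$ for $|y|\le N^{-1}$, the factorization $\log|b|=h+\sum_i m_i\log|1-e^{2\pi i(\cdot-\zeta_i)}|$ is legitimate on a slightly shrunk strip, the geometric-series/small-divisor bound $\sum_{n\ne0}\rho^{|n|}|n|(\log|n|)^\alpha<\infty$ disposes of the harmonic part, the algebra pulling out $e^{2\pi(h_i-y)}$ and the maximum principle for $\prod_k(u-e^{\pm2\pi ik\omega})$ are correct, and the recombination $\langle h(\cdot+iy)\rangle+2\pi\sum_i m_i\max(0,h_i-y)=D(y)$ is exactly Jensen's formula. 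The only place where you are sketchy is the one genuinely quantitative ingredient, $\max_\theta\sum_{k<N}\log\bigl(2|\sin\pi(\theta-k\omega)|\bigr)\le C(\log N)^{2+\alpha}$: your block comparison with $\{kp_s/q_s\}$ does work, but the handling of the few indices nearest $\theta$ needs the exact cancellation coming from $\prod_{j=1}^{q-1}2\sin(\pi j/q)=q$ (equivalently $\prod_{k<q}2|\sin\pi(\theta-k/q)|=2|\sin\pi q\theta|$), since for $\theta$ extremely close to a rational point both the full product and the removed factors are huge in modulus of logarithm and only their ratio is $e^{O(\log q_s)}$; a cleaner way to make this airtight is Denjoy--Koksma applied to the truncation $\max\bigl(\log(2|\sin\pi t|),-\log N\bigr)$, whose variation is $O(\log N)$ and whose mean exceeds $0$ by only $O(N^{-1})$, summed over the $O((\log N)^{1+\alpha})$ Ostrowski blocks permitted by $a_{s+1}\lesssim(\log q_s)^\alpha$. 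With that detail fixed, your argument is a complete, self-contained proof of the cited lemma.
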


\section{Estimates for Jensen Averages}\label{sec:harnack}

For the purposes of the next section we are interested in the Jensen averages of
$ \log\norm{\M_N(z)} $, where $ \M_N(z)=\M_N(z,\omega,E) $ is one of the following matrices:
\begin{equation*}
	M_N^a(z,\omega,E),\quad
	\begin{bmatrix}
		1 & 0\\ 0 & 0
	\end{bmatrix} M_N^a(z,\omega,E),\quad
	M_N^a(z,\omega,E)\begin{bmatrix}
		1 & 0\\ 0 & 0
	\end{bmatrix}.
\end{equation*}
It is to be expected that these Jensen averages are related to the number of zeroes of the entries
of $ \M_N $. In particular we are concerned with the case when the entries have no zeroes and we will show
in \cref{prop:Jensen-estimate-adjusted} that in this case the Jensen average is small.
A straightforward way of controlling these Jensen averages is by estimating the quotients
$ \norm{\M_N(\zeta)}/\norm{\M_N(z)}$, $ \zeta\in \D(z,\epsilon r) $. This will be achieved by using the
Taylor formula in \cref{prop:Taylor-for-monodromy}. The estimate is facilitated by the fact that under the
assumption that the entries of $ \M_N $ have no zeroes we can take advantage of Harnack's inequality.
 We recall a version of Harnack's inequality. This is a minor reformulation
of \cite[Lem. 8.2]{GS-11-resonances}, that doesn't affect its proof.

\begin{lemma}\label{lem:abstract-Harnack}
	Let $ M\gg 1 $, $ r_0 >0 $, $ r_1=(1+\log M)^{-2}r_0 $, $ z_0\in \C $. If  $ f $ is an analytic and
	nonvanishing function on $ \D(z_0,r_0) $ such that
	\begin{equation*}
		\sup_{z\in \D(z_0,r_0)}|f(z)|\le M\text{ and } |f(z_0)|\ge M^{-1},
	\end{equation*}
	then
	\begin{equation*}
		|f(z)|\lesssim |f(z_0)|,~z\in \D(z_0,r_1).
	\end{equation*}
\end{lemma}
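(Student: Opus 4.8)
The plan is to reduce the statement to a standard form of Harnack's inequality for positive harmonic functions by looking at $u(z) = \log|f(z)|$, which is harmonic on $\D(z_0,r_0)$ since $f$ is analytic and nonvanishing there. The two hypotheses translate into $u(z)\le \log M$ on $\D(z_0,r_0)$ and $u(z_0)\ge -\log M$, so the function $v(z):=\log M - u(z)$ is a \emph{nonnegative} harmonic function on $\D(z_0,r_0)$ with $v(z_0)\le 2\log M$. The classical Harnack inequality on the disk gives, for $|z-z_0|=\rho<r_0$,
\begin{equation*}
	\frac{r_0-\rho}{r_0+\rho}\,v(z_0)\le v(z)\le \frac{r_0+\rho}{r_0-\rho}\,v(z_0),
\end{equation*}
so in particular $v(z)\le \frac{r_0+\rho}{r_0-\rho}\cdot 2\log M$ whenever $|z-z_0|\le\rho$.

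Next I would plug in $\rho=r_1=(1+\log M)^{-2}r_0$. Then $\frac{r_0+\rho}{r_0-\rho}= \frac{1+(1+\log M)^{-2}}{1-(1+\log M)^{-2}}=1+O\!\left((1+\log M)^{-2}\right)$, so for $M\gg 1$ we get $v(z)\le 2\log M\cdot\big(1+O((1+\log M)^{-2})\big)\le 2\log M + O((\log M)^{-1})$. Unwinding, this says
\begin{equation*}
	u(z)=\log M - v(z)\ge -\log M - O\!\left((\log M)^{-1}\right)\ge \log|f(z_0)| - O(1)
\end{equation*}
on $\D(z_0,r_1)$, using $\log|f(z_0)| = u(z_0)\ge -\log M$ — wait, that lower bound on $u(z_0)$ only gives $u(z)\ge -\log M - O(1)$, which is weaker than what we want. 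To get the comparison to $|f(z_0)|$ rather than to $M^{-1}$, one must instead apply Harnack to the shifted nonnegative harmonic function $w(z):=v(z)-v(z_0)\cdot\frac{r_0-\rho}{r_0+\rho}$ more carefully, or — cleaner — apply the two-sided Harnack bound directly to $v$: from $v(z)\le\frac{r_0+\rho}{r_0-\rho}v(z_0)$ and $v(z)\ge 0$ we only recover the one-sided statement. The correct route is: the difference $v(z)-v(z_0)$ satisfies $|v(z)-v(z_0)|\le \big(\frac{r_0+\rho}{r_0-\rho}-1\big)v(z_0)=\frac{2\rho}{r_0-\rho}v(z_0)\lesssim (1+\log M)^{-2}\cdot\log M\lesssim (1+\log M)^{-1}$ for $|z-z_0|\le r_1$. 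Hence $|u(z)-u(z_0)|\lesssim 1$, i.e. $\log|f(z)|\le \log|f(z_0)|+O(1)$, which is exactly $|f(z)|\lesssim|f(z_0)|$ on $\D(z_0,r_1)$.

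The only genuinely delicate point is the bookkeeping with the factor $\tfrac{r_0+\rho}{r_0-\rho}$ and confirming that with $\rho=r_1=(1+\log M)^{-2}r_0$ the resulting error $\tfrac{2\rho}{r_0-\rho}\,v(z_0)$ is $O(1)$ (indeed $o(1)$); this is where the precise choice $r_1=(1+\log M)^{-2}r_0$ is used, so that the oscillation bound $\lesssim (1+\log M)^{-2}\cdot v(z_0)$ with $v(z_0)\le 2\log M$ collapses to $\lesssim (1+\log M)^{-1}$. Everything else — harmonicity of $\log|f|$, the Harnack inequality on a disk, and exponentiating the additive estimate — is routine. I would also note explicitly that the hypothesis $M\gg 1$ guarantees $v(z_0)\le 2\log M$ is a meaningful (positive) bound and that $r_1<r_0$, so that the disk $\D(z_0,r_1)$ on which Harnack is applied is compactly contained in the domain of harmonicity.
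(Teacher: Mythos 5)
Your argument is correct and is essentially the same as the proof behind the paper's citation (the paper does not prove this lemma itself but refers to \cite[Lem.\ 8.2]{GS-11-resonances}, which is likewise proved by applying Harnack's inequality on the disk to the nonnegative harmonic function $\log M-\log|f|$ and using the radius $r_1=(1+\log M)^{-2}r_0$ to make the oscillation $O((1+\log M)^{-1})$). The mid-proof detour where you first plug in the one-sided bound is harmless, since your final estimate $v(z_0)-v(z)\le\frac{2\rho}{r_0+\rho}\,v(z_0)\lesssim(1+\log M)^{-2}\log M=O(1)$ is exactly the step that yields $|f(z)|\lesssim|f(z_0)|$ on $\D(z_0,r_1)$.
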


In what follows we establish the auxiliary results needed for the proof of \cref{prop:Taylor-for-monodromy}.
\begin{lemma}\label{lem:Cauchy-Harnack}
	Let $ (\omega,E)\in\T_{c,\alpha}\times \C $ be such that $ L(\omega,E)>\gamma>0 $. There
	exists $ N_0(a,b,E,\omega,\gamma) $ such that for any $ k\ge 0 $, $N\ge N_0$, $ |z_0|\ll 1/N $, and
	$ 0<r_0\ll 1/N $ we have
	that if all the entries of $ \M_N(z,\omega,E) $ are either identically zero or have no zeros in
	$ \D(z_0,r_0) $, then
	\begin{equation*}
		\norm{\partial_z^k \M_N(z,\omega,E)}\lesssim k! r_1^{-k} \norm{\M_N(z_0,\omega,E)},
		z\in \D(z_0,r_1),\ r_1=r_0^{1+}.
	\end{equation*}
\end{lemma}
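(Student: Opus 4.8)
The plan is to combine the Cauchy estimates for derivatives of an analytic function with the Harnack inequality of \cref{lem:abstract-Harnack}, applied entry by entry to $\M_N$. First I would fix the scales: set $r_1=r_0^{1+}$ and note that $\D(z_0,r_1)$ sits well inside $\D(z_0,r_0)$, with $r_0-r_1\gtrsim r_0$. By \cref{prop:uniform-upper-bound} and \cref{prop:zero-count-log} (or directly \cref{prop:uniform-upper-bound}), the entries of $\M_N$ on $\D(z_0,r_0)$ are bounded in modulus by $\exp(NL^a(\omega,E)+(\log N)^{C_0})=:M_+$; I would use this $M_+$ (times a harmless constant) as the quantity $M$ in Harnack. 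To get the lower bound at the center required by Harnack, I would invoke \cref{lem:ldt-Cartan}: since the relevant entry has no zeros in $\D(z_0,r_0)$, the Cartan-type lower bound gives $\log|(\M_N)_{ij}(z_0)|>NL^a(\omega,E)-(\log r_0)^2(\log N)^{C_0}$, hence $|(\M_N)_{ij}(z_0)|\ge M_+^{-1}$ after adjusting constants and using $|z_0|,r_0\ll 1/N$ together with $\log(1/r_0)\lesssim \log N$ on the relevant range (this is where the hypothesis $r_0\ll 1/N$, not too small, enters — one needs $(\log r_0)^2(\log N)^{C_0}\ll NL^a$, which holds for $N$ large since $L^a\gtrsim \gamma+D$ and $r_0$ is polynomially small in $N$).

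Granting the Harnack hypotheses for each nonzero entry $f=(\M_N)_{ij}$, \cref{lem:abstract-Harnack} with $M=M_+$ yields $|f(z)|\lesssim |f(z_0)|\le \norm{\M_N(z_0)}$ for $z\in\D(z_0,\tilde r_1)$ where $\tilde r_1=(1+\log M_+)^{-2}r_0\sim (N)^{-2}r_0$ up to logs; in particular $\D(z_0,r_1)\subset \D(z_0,\tilde r_1)$ provided $r_1=r_0^{1+}$ is chosen with a sufficiently small power gain, which is exactly the role of the notation $r_0^{1+}$. Thus on $\D(z_0,r_1')$ with $r_1'$ slightly larger than $r_1$ we have $|f(z)|\lesssim \norm{\M_N(z_0)}$ for every entry, hence $\norm{\M_N(z)}\lesssim \norm{\M_N(z_0)}$ there. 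Now I would apply the Cauchy integral formula for the $k$-th derivative of each entry on the circle $|\zeta-z|=r_1'-r_1\sim r_1$ (after recentering at $z\in\D(z_0,r_1)$, the disk $\D(z,r_1'-r_1)$ still lies in the region where the Harnack bound holds):
\begin{equation*}
	\partial_z^k f(z)=\frac{k!}{2\pi i}\int_{|\zeta-z|=\rho}\frac{f(\zeta)}{(\zeta-z)^{k+1}}\,d\zeta,\qquad \rho\sim r_1,
\end{equation*}
which gives $|\partial_z^k f(z)|\le k!\,\rho^{-k}\sup_{|\zeta-z|=\rho}|f(\zeta)|\lesssim k!\,r_1^{-k}\norm{\M_N(z_0)}$. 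Summing (or taking max) over the four entries gives the claimed bound $\norm{\partial_z^k\M_N(z)}\lesssim k!\,r_1^{-k}\norm{\M_N(z_0)}$.

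The identically-zero entries are trivial and contribute nothing. The main obstacle I anticipate is bookkeeping of the radii: one must verify that the Harnack radius $(1+\log M_+)^{-2}r_0$, which loses a $(\log N)$-type factor (since $\log M_+\sim NL^a$... ) — wait, more carefully, $\log M_+\sim N$, so $(1+\log M_+)^{-2}\sim N^{-2}$, meaning the Harnack disk has radius $\sim N^{-2}r_0$, which is \emph{not} of the form $r_0^{1+}$ unless $r_0$ is at least polynomially small; since the hypothesis only gives $r_0\ll 1/N$, one instead reads $r_1=r_0^{1+}$ as: for $N$ large, $r_0^{1+\delta}\le N^{-2}r_0$ holds once $r_0\le N^{-2/\delta}$, so the statement should be understood with an implicit constraint $r_0\le N^{-C}$ absorbed into "$r_0\ll 1/N$" and into the choice of the exponent in $r_0^{1+}$. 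I would make this precise at the start by recording that $\log M_+\lesssim N(\log N)^{C_0}$ together with $\log(1/r_0)\gtrsim \log N$ (valid in the regime of interest) forces $r_1=r_0^{1+\delta}$ with a $\delta$ depending only on $a,b,\omega,E,\gamma$, and then the Cauchy–Harnack chain goes through cleanly. Everything else is routine: Cauchy estimates, Harnack, and the already-available uniform upper bound and Cartan lower bound.
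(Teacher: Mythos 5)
Your overall strategy (Cartan lower bound at the center, uniform upper bound on the disk, Harnack, then Cauchy estimates) matches the paper's, but the way you feed the bounds into \cref{lem:abstract-Harnack} creates a genuine gap. You take $M=M_+=\exp(NL^a+(\log N)^{C_0})$, so $\log M\sim N$ and the Harnack disk has radius $(1+\log M)^{-2}r_0\sim N^{-2}r_0$; as you yourself notice, this is not $\gg r_0^{1+}$ in the stated regime, and your repair --- reading an implicit constraint $r_0\le N^{-C}$ into the hypothesis --- changes the lemma. The hypothesis is only $r_0\ll 1/N$, and downstream (\cref{lem:mu1-mu2}, \cref{prop:Taylor-for-monodromy}, \cref{prop:Jensen-estimate-adjusted}) the lemma must cover the whole range $\exp(-N^{1/2-})\lesssim r_0\ll 1/N$, e.g.\ $r_0\sim N^{-1}(\log N)^{-10}$, where your version gives nothing. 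The missing observation is that Harnack's inequality is invariant under multiplying $f$ by a constant, so one should normalize: apply \cref{lem:abstract-Harnack} to $e^{-NL^a}f_N$ (equivalently, only the ratio between the supremum on $\D(z_0,r_0)$ and the value at $z_0$ matters). With \cref{prop:uniform-upper-bound} and \cref{lem:ldt-Cartan} this ratio is $\exp\bigl(O(1)(\log r_0)^2(\log N)^{C}\bigr)$, so $\log M$ is only polylogarithmic and the Harnack radius becomes $r_0\bigl(1+(\log r_0)^2(\log N)^{C}\bigr)^{-2}\gg r_0^{1+}$, using $\log(1/r_0)\gtrsim\log N$. This is exactly how the paper proceeds; your Cauchy step at radius $\sim r_1$ is then the same as the paper's.

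A secondary defect of the unnormalized choice: the center condition $|f(z_0)|\ge M_+^{-1}$ requires $NL^a\gtrsim(\log r_0)^2(\log N)^{C}$, i.e.\ a positive lower bound on $L^a$, which is not among the hypotheses; only $L>\gamma>0$ is assumed, and $L^a=L+D$ with $D=\int_\T\log|b(x)|\,dx$ possibly negative. The normalized application removes this issue as well, since only the difference of the two logarithmic bounds (upper bound minus lower bound at the center) enters.
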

\begin{proof}
	It is convenient for the proof to work with the $ l^1 $ matrix norm. Let $ f_N(z,\omega,E) $ be any of
	the not identically zero entries of $ \M_N(z,\omega,E) $. By \cref{lem:ldt-Cartan} we have
	\begin{equation*}
		\log |f_N(z_0,\omega,E)|\ge NL^a(\omega,E)-(\log r_0)^2(\log N)^C.
	\end{equation*}
	At the same time from \cref{prop:uniform-upper-bound} we know
	\begin{equation*}
		\sup \{ \log|f_N(z,\omega,E)|: z\in \D(z_0,r_0) \}\le NL^a(\omega,E)+(\log N)^C.
	\end{equation*}
	Applying \cref{lem:abstract-Harnack} with $ f=\exp(NL^a)f_N $,
	$ M=\exp((\log r_0)^2(\log N)^C) $ we conclude that
	\begin{equation*}
		\norm{\M_N(z,\omega,E)}\lesssim \norm{\M_N(z_0,\omega,E)}, z\in \D(z_0,r),
	\end{equation*}
	with
	\begin{equation*}
		r=\frac{r_0}{(1+(\log r_0)^2(\log N)^C)^2}\gg r_0^{1+}=r_1,
	\end{equation*}
	provided $ N $ is large enough.
	From the above and the Cauchy formula we get that
	for $ z\in \D(z_0,r_1) $ we have
	\begin{equation*}
		\norm{\partial_z^k \M_N(z,\omega,E)}\lesssim k! r_1^{-k}
			\sup \{ \norm{\M_N(\zeta,\omega,E)}: \zeta \in \D(z_0,2r_1) \}
		\lesssim k! r_1^{-k}\norm{\M_N(z_0,\omega,E)}.
	\end{equation*}
\end{proof}

\begin{lemma}\label{lem:matrix-to-entry}
	If $ B $ is a $ 2\times 2 $ matrix with top-left entry $ b $, then
	\begin{equation*}
		\log \norm{ \begin{bmatrix}
			1 & 0\\ 0 & 0
		\end{bmatrix}+zB}
		=\log|1+bz|+O(|z|^2)\norm{B}^2,\text{ as } z\to 0.
	\end{equation*}
\end{lemma}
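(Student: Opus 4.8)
The statement is purely a matrix-norm expansion, so the plan is to write $\begin{bmatrix}1&0\\0&0\end{bmatrix}+zB$ explicitly and read off its Frobenius (or $l^1$) norm. Write $B=\begin{bmatrix} b & c\\ d & e\end{bmatrix}$, so
\[
\begin{bmatrix}1&0\\0&0\end{bmatrix}+zB=\begin{bmatrix} 1+bz & cz\\ dz & ez\end{bmatrix}.
\]
First I would compute $\norm{\cdot}^2$ for a convenient norm. Using the Hilbert–Schmidt norm, $\norm{\,\cdot\,}^2=|1+bz|^2+(|c|^2+|d|^2+|e|^2)|z|^2=|1+bz|^2+O(|z|^2)\norm{B}^2$, since $|c|^2+|d|^2+|e|^2\le\norm{B}^2$. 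Taking $\tfrac12\log$ of both sides, the plan is to use $\log(|1+bz|^2+t)=\log|1+bz|^2+\log\bigl(1+t/|1+bz|^2\bigr)$ with $t=O(|z|^2)\norm{B}^2$; since $|1+bz|\to 1$ as $z\to 0$, we have $1/|1+bz|^2=O(1)$ and the correction term is $\log(1+O(|z|^2)\norm{B}^2)=O(|z|^2)\norm{B}^2$. Dividing the resulting $\tfrac12\log(\cdots)$ through gives exactly $\log|1+bz|+O(|z|^2)\norm{B}^2$.

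One small bookkeeping point: if one prefers the operator norm or the $l^1$ norm (as used elsewhere in the paper) rather than Hilbert–Schmidt, the norms differ only by absolute constants, and since $\log$ of a quantity comparable to $|1+bz|^2+O(|z|^2)\norm{B}^2$ still equals $\log|1+bz|^2+O(|z|^2)\norm{B}^2$ for $z$ near $0$ (the multiplicative constant contributes an additive $O(1)$ that can be absorbed into the lower-order behavior once one notes $|1+bz|\to1$, or simply by stating the estimate for the Hilbert–Schmidt norm and invoking norm equivalence), the conclusion is unchanged. I would write the argument for the Hilbert–Schmidt norm for cleanliness and remark that the other norms follow by equivalence.

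There is essentially no obstacle here; the only thing to be careful about is the regime of validity of the $O(|z|^2)$ notation. The asymptotic is as $z\to 0$, so implicitly $|z|\norm{B}$ is small enough that $|1+bz|$ stays bounded away from $0$ — this is automatic since the statement is an asymptotic in $z$ with $B$ fixed (or, if $B$ is allowed to vary, the implied constant and the threshold depend only on an upper bound for $\norm{B}$, which is the form in which the lemma will be applied). I would state it in the fixed-$B$, $z\to0$ form matching the lemma and leave the uniform version implicit.
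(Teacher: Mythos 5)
Your Hilbert--Schmidt computation is correct, but the step where you pass to other norms ``by norm equivalence'' is a genuine gap, and it concerns exactly the point the paper is careful about. Equivalence of norms only yields $\log\norm{\cdot}_{l^1}=\log\norm{\cdot}_{HS}+O(1)$, and an additive $O(1)$ (equivalently, a multiplicative constant inside the logarithm) cannot be absorbed into an error of size $O(|z|^2)\norm{B}^2$ as $z\to0$; your parenthetical claim that it ``can be absorbed into the lower-order behavior'' is not correct. In fact the conclusion is simply false for the $l^1$ norm: writing $P=\begin{bmatrix}1&0\\0&0\end{bmatrix}$ and $B=\begin{bmatrix}b&c\\d&e\end{bmatrix}$, one has $\norm{P+zB}_{l^1}=|1+bz|+(|c|+|d|+|e|)|z|$, so the error term is genuinely of size $O(|z|)\norm{B}$ and not quadratic. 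The paper states the lemma for the operator norm induced by the Euclidean norm on $\C^2$, gives no proof (it cites \cite[p.~835]{GS-08-Fine}), and explicitly adds the remark that the result is norm-sensitive, with the $l^1$ norm only giving $O(|z|)\norm{B}$. The quadratic error is not a cosmetic detail: in \cref{prop:Taylor-for-monodromy} and \cref{prop:Jensen-estimate-adjusted} one has $\norm{B}\lesssim r_1^{-1}$ and $|z-z_0|\lesssim r_1^{1+}$, and the multiscale zero count in \cref{prop:zero-count-const} needs the per-disk error $O(r^2)r_1^{-2}$ to be small after dividing by $\rho_0\sim r$; a linear error $O(r)r_1^{-1}$ would be far too large there.

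The good news is that your Frobenius computation can be upgraded to a correct proof of the stated (operator-norm) version without invoking equivalence of norms: the operator norm of $P+zB$ is at least the Euclidean norm of its first column, hence at least $|1+bz|$, and at most the Hilbert--Schmidt norm, which your computation bounds by $|1+bz|\bigl(1+O(|z|^2)\norm{B}^2/|1+bz|^2\bigr)^{1/2}$. Taking logarithms and using $|1+bz|\ge 1/2$ for $|z|\norm{B}$ small sandwiches $\log\norm{P+zB}$ between $\log|1+bz|$ and $\log|1+bz|+O(|z|^2)\norm{B}^2$, which is the lemma. So replace the norm-equivalence paragraph by this two-sided comparison (and drop the claim about the $l^1$ norm); as written, that paragraph asserts something the paper explicitly warns is false.
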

For the proof we refer to \cite[p. 835]{GS-08-Fine}. We note that this result is sensitive to the choice of the norm. For example, with the $ l^1 $ norm the error term would be $ O(|z|)\norm{B} $ (we are using the
standard matrix norm induced by the Euclidean norm on $ \C^2 $).

\begin{lemma}\label{lem:mu1-mu2}
	Let $ (\omega,E)\in\T_{c,\alpha}\times \C $ be such that $ L(\omega,E)>\gamma>0 $. There
	exists $ N_0(a,b,E,\omega,\gamma) $ such that for $N\ge N_0$, $ |z_0|\ll 1/N $, and
	$ \exp(-N^{1/2-})\lesssim r_0\ll 1/N $ we have
	that if all the entries of $ \M_N(z,\omega,E) $ are either identically zero or have no zeros in
	$ \D(z_0,r_0) $, then
	\begin{equation*}
		\frac{|\det \M_N(z_0,\omega,E)|}{\norm{\M_N(z_0,\omega,E)}^2}\le \exp(-NL(\omega,E)).
	\end{equation*}
\end{lemma}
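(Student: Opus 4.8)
The quantity $|\det M_N^a(z_0)|/\norm{M_N^a(z_0)}^2$ being exponentially small is the statement that the monodromy matrix is nearly rank one at $z_0$, and the plan is to get this by playing an easy upper bound for the determinant against the lower bound for the norm that the no-zeros hypothesis unlocks via \cref{lem:ldt-Cartan}. First I would dispose of the two degenerate choices of $\M_N$: if $\M_N$ is $\begin{bmatrix}1&0\\0&0\end{bmatrix}M_N^a$ or $M_N^a\begin{bmatrix}1&0\\0&0\end{bmatrix}$, then $\det\M_N\equiv 0$ and the inequality is trivial, so one may assume $\M_N=M_N^a$. Next I would compute the determinant: from the product formula for $M_N$ each factor has determinant $\t b(z+j\omega)/b(z+(j+1)\omega)$, whence $\det M_N(z)=\prod_{j=0}^{N-1}\t b(z+j\omega)/b(z+(j+1)\omega)$, and since $M_N^a=\big(\prod_{j=1}^N b(z+j\omega)\big)M_N$ this gives
\begin{equation*}
	\det M_N^a(z)=\Big(\prod_{j=1}^N b(z+j\omega)\Big)\Big(\prod_{j=0}^{N-1}\t b(z+j\omega)\Big),\qquad
	\log|\det M_N^a(z_0)|=S_N(z_0+\omega,\omega)+\t S_N(z_0,\omega).
\end{equation*}

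To bound the numerator I would note that, since $|z_0|\ll 1/N$ and $\omega\in\R$, after reducing real parts mod $1$ the points $z_0$ and $z_0+\omega$ lie in $\{|\Im z|\le N^{-1}\}$, so \cref{lem:uniform-bound-S_N} gives $S_N(z_0+\omega,\omega)\le ND+C_1(\log N)^{C_0}$ and $\t S_N(z_0,\omega)\le ND+C_1(\log N)^{C_0}$, hence $\log|\det M_N^a(z_0)|\le 2ND+2C_1(\log N)^{C_0}$. To bound the denominator from below I would pick an entry $f_N$ of $M_N^a$ that is not identically zero (one exists because $\det M_N^a\not\equiv 0$, as $b\not\equiv 0$); by hypothesis $f_N$ has no zeros in $\D(z_0,r_0)$, so \cref{lem:ldt-Cartan} with $k_0=0$ yields $\log|f_N(z_0)|\ge NL^a(\omega,E)-(\log r_0)^2(\log N)^{C_0}$, and therefore $\log\norm{M_N^a(z_0)}\ge NL^a(\omega,E)-(\log r_0)^2(\log N)^{C_0}$.

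Combining the two estimates and using $L^a(\omega,E)=L(\omega,E)+D$ from \cref{eq:L-La} I would arrive at
\begin{equation*}
	\log\frac{|\det M_N^a(z_0)|}{\norm{M_N^a(z_0)}^2}\le -2NL(\omega,E)+2C_1(\log N)^{C_0}+2(\log r_0)^2(\log N)^{C_0}.
\end{equation*}
The hypothesis $r_0\gtrsim\exp(-N^{1/2-})$ forces $(\log r_0)^2\lesssim N^{1-}$, so both error terms are $o(N)$ with constants depending only on $a,b,E,\omega,\gamma$; since $L(\omega,E)>\gamma$, they are bounded by $NL(\omega,E)$ for $N\ge N_0(a,b,E,\omega,\gamma)$, and the right-hand side becomes $\le -NL(\omega,E)$, which is the claim. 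I do not expect any real obstacle here — the argument is short — the only thing to be careful about is the bookkeeping of the error terms, and the lower bound on $r_0$ is imposed exactly so that the $(\log r_0)^2$ loss remains subpolynomial and is absorbed by $NL(\omega,E)$.
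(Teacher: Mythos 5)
Your proof is correct and follows essentially the same route as the paper: compute $\log|\det M_N^a(z_0)|=\t S_N(z_0,\omega)+S_N(z_0+\omega,\omega)$ and bound it by $2ND+O((\log N)^{C})$ via \cref{lem:uniform-bound-S_N}, lower bound $\norm{M_N^a(z_0)}$ through \cref{lem:ldt-Cartan} with no zeros in the disk, and conclude with \cref{eq:L-La} after absorbing the $(\log r_0)^2(\log N)^{C}\lesssim N^{1-}$ error using $L>\gamma$. The only difference is that you spell out the trivial degenerate cases and the determinant computation, which the paper leaves implicit.
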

\begin{proof}
	We are only concerned with the case $ \M_N=M_N^a $ because the other cases are trivial. Since we have
	\begin{equation*}
		\det M_N^a(z_0,\omega,E)=\exp(\t S_N(z_0,\omega)+S_N(z_0+\omega,\omega)),
	\end{equation*}
	it follows from \cref{lem:uniform-bound-S_N} that
	\begin{equation*}
		|\det M_N^a(z_0,\omega,E)|\le \exp(2ND+(\log N)^C).
	\end{equation*}
	On the other hand, \cref{lem:ldt-Cartan} yields that
	\begin{equation*}
		\norm{M_N^a(z_0,\omega,E)}^2\ge \exp(2NL^a(\omega,E)-(\log r_0)^2(\log N)^C)\ge
		\exp(2NL^a(\omega,E)-N^{1-}).
	\end{equation*}
	The conclusion follows by recalling that we have \cref{eq:L-La}.
\end{proof}

\begin{proposition}\label{prop:Taylor-for-monodromy}
	Let $ (\omega,E)\in\T_{c,\alpha}\times \C $ be such that $ L(\omega,E)>\gamma>0 $. There
	exists $ N_0(a,b,E,\omega,\gamma) $ such that for $N\ge N_0$, $ |z_0|\ll 1/N $, and
	$ \exp(-N^{1/2-})\lesssim r_0\ll 1/N $ we have
	that if all the entries of $ \M_N(z,\omega,E) $ are either identically zero or have no zeros in
	$ \D(z_0,r_0) $, then for $ z\in \D(z_0,r_1^{1+}) $, $ r_1=r_0^{1+} $, we have
	\begin{equation*}
		\log\frac{\norm{\M_N(z,\omega,E)}}{\norm{\M_N(z_0,\omega,E)}}
		=\log|1+b(z-z_0)|+O(|z-z_0|^2 )r_1^{-2}+O(1)\exp(-NL(\omega,E)),
	\end{equation*}
	with $ b=b(z_0) $ and $ |b|\lesssim r_1^{-1} $.
\end{proposition}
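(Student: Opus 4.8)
The plan is to reduce the estimate for the matrix norm to a statement about a single entry, and then apply Harnack-type control. First I would set $w = z - z_0$ and write, using that $\M_N(z_0,\omega,E)$ is invertible when $\M_N = M_N^a$ (the degenerate cases are handled separately and are easier),
\begin{equation*}
	\M_N(z) = \M_N(z_0) + w\,\partial_z\M_N(z_0) + \frac{w^2}{2}\partial_z^2\M_N(\xi)
\end{equation*}
for some $\xi$ on the segment from $z_0$ to $z$, by Taylor's formula. Factoring out $\M_N(z_0)$ on the right (for the $M_N^a$ case) gives $\M_N(z) = (\,\mathrm{Id} + w B + O(w^2)\,)\,\M_N(z_0)$ where $B = \partial_z\M_N(z_0)\,\M_N(z_0)^{-1}$, so $\norm{\M_N(z)}/\norm{\M_N(z_0)}$ is close to $\norm{\mathrm{Id} + wB}$ up to the quadratic remainder. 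The key input for the remainder is \cref{lem:Cauchy-Harnack}, which bounds $\norm{\partial_z^k\M_N(z,\omega,E)} \lesssim k!\, r_1^{-k}\norm{\M_N(z_0)}$ for $z\in\D(z_0,r_1)$; this directly gives $\norm{B}\lesssim r_1^{-1}$ (so $|b|\lesssim r_1^{-1}$, where $b$ is the top-left entry of $B$) and controls $\frac{w^2}{2}\partial_z^2\M_N(\xi)\,\M_N(z_0)^{-1} = O(|w|^2)r_1^{-2}$ once we know $\norm{\M_N(z_0)^{-1}}$ is not too large — which is exactly what \cref{lem:mu1-mu2} provides, since $\norm{\M_N(z_0)^{-1}} = \norm{\M_N(z_0)}/|\det\M_N(z_0)|$ up to the factor $\norm{\M_N(z_0)}$, and $|\det\M_N(z_0)|/\norm{\M_N(z_0)}^2 \le \exp(-NL)$.

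Next I would pass from $\norm{\mathrm{Id} + wB}$ to $\log|1+bw|$ using \cref{lem:matrix-to-entry}: writing $wB$ in place of $zB$ there, we get $\log\norm{\mathrm{Id}+wB} = \log|1+bw| + O(|w|^2)\norm{B}^2 = \log|1+bw| + O(|w|^2)r_1^{-2}$. The restriction to $z\in\D(z_0,r_1^{1+})$ rather than $\D(z_0,r_1)$ is what makes the error terms genuinely small: with $|w|\le r_1^{1+}$ one has $|w|^2 r_1^{-2}\le r_1^{2(1+)-2} = r_1^{0+}$, small; and this smaller radius also guarantees $1+wB$ is close enough to the identity that the factorization and the expansion of $\log\norm{\cdot}$ are legitimate. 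The terms are then collected: the errors from the Taylor remainder and from \cref{lem:matrix-to-entry} combine into $O(|w|^2)r_1^{-2}$, while the error from replacing $\M_N(z_0)^{-1}$-applications by their operator-norm bounds — more precisely from the discrepancy between $\norm{(\mathrm{Id}+wB)\M_N(z_0)}$ and $\norm{\mathrm{Id}+wB}\norm{\M_N(z_0)}$, which is where the $\det/\norm{\cdot}^2$ ratio enters — becomes the $O(1)\exp(-NL(\omega,E))$ term.

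The main obstacle I expect is the bookkeeping around the submultiplicativity gap: $\norm{AB}\le\norm{A}\norm{B}$ is only an inequality, so to get a two-sided estimate $\log(\norm{\M_N(z)}/\norm{\M_N(z_0)}) = \log\norm{\mathrm{Id}+wB} + (\text{small})$ one must argue that $\M_N(z_0)$ acts on the relevant unit vectors almost isometrically in the directions that matter, and quantify the loss. This is precisely controlled by how degenerate $\M_N(z_0)$ is, i.e. by $|\det\M_N(z_0)|/\norm{\M_N(z_0)}^2$, which \cref{lem:mu1-mu2} pins down as $\le \exp(-NL)$ — hence the exponentially small error rather than an error that is merely $O(|w|^2)r_1^{-2}$. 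For the degenerate choices $\M_N = \left[\begin{smallmatrix}1&0\\0&0\end{smallmatrix}\right]M_N^a$ and $M_N^a\left[\begin{smallmatrix}1&0\\0&0\end{smallmatrix}\right]$ the matrix is rank one, so $\norm{\M_N(z)}$ is literally the Euclidean norm of a single (analytic, nonvanishing) row or column vector and one can work with its entries directly via \cref{lem:Cauchy-Harnack} and \cref{lem:matrix-to-entry}, with no determinant term at all; I would dispatch these cases first and then focus the argument on $M_N^a$.
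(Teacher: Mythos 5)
There is a genuine gap, and it sits exactly where you located ``the main obstacle.'' Your factorization $\M_N(z)=(\mathrm{Id}+wB+O(w^2))\,\M_N(z_0)$ with $B=\partial_z\M_N(z_0)\,\M_N(z_0)^{-1}$ does not give $\norm{B}\lesssim r_1^{-1}$: \cref{lem:Cauchy-Harnack} bounds $\norm{\partial_z\M_N(z_0)}\lesssim r_1^{-1}\norm{\M_N(z_0)}$, while $\norm{\M_N(z_0)^{-1}}=\norm{\M_N(z_0)}/|\det\M_N(z_0)|$, so the factor $\norm{\M_N(z_0)}$ does not cancel and the best available bound is $\norm{B}\lesssim r_1^{-1}\norm{\M_N(z_0)}^{2}/|\det\M_N(z_0)|$. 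You have read \cref{lem:mu1-mu2} backwards: the inequality $|\det\M_N(z_0)|/\norm{\M_N(z_0)}^{2}\le\exp(-NL)$ says the matrix is exponentially \emph{ill}-conditioned, i.e.\ the normalized inverse is exponentially \emph{large}, not ``not too large''; the same problem ruins your bound on the remainder $\tfrac{w^2}{2}\partial_z^2\M_N(\xi)\M_N(z_0)^{-1}$. A second, independent error: \cref{lem:matrix-to-entry} is a statement about $\norm{\left[\begin{smallmatrix}1&0\\0&0\end{smallmatrix}\right]+zB}$, not about $\norm{\mathrm{Id}+zB}$; for $\mathrm{Id}+wB$ the norm is not governed by the top-left entry of $B$ (take $B$ diagonal with a large lower-right entry), so that substitution is invalid. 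Finally, your proposed cure for the submultiplicativity gap via the ratio $|\det|/\norm{\cdot}^{2}$ is only a heuristic and is never made into an argument.

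The missing idea --- and the paper's actual route --- is to normalize with the unitaries of the singular value decomposition instead of with $\M_N(z_0)^{-1}$. Writing $\M_N(z_0)=U\diag(\mu_1,\mu_2)V$ with $\mu_1=\norm{\M_N(z_0)}$, one has the \emph{exact} identity $\norm{\M_N(z)}/\norm{\M_N(z_0)}=\norm{\mu_1^{-1}U^{-1}\M_N(z)V^{-1}}$, and Taylor's formula together with \cref{lem:Cauchy-Harnack} gives $\mu_1^{-1}U^{-1}\M_N(z)V^{-1}=\diag(1,\mu_2/\mu_1)+(z-z_0)B+O(|z-z_0|^2)r_1^{-2}$ with $B=\mu_1^{-1}U^{-1}\partial_z\M_N(z_0)V^{-1}$, so $\norm{B}\lesssim r_1^{-1}$ comes for free (no inverse ever appears). \cref{lem:mu1-mu2} is then used in the correct direction: $\mu_2/\mu_1\le\exp(-NL)$ lets one replace $\diag(1,\mu_2/\mu_1)$ by $\diag(1,0)$ at cost $O(1)\exp(-NL)$, which is precisely the exponential error term in the statement, and \cref{lem:matrix-to-entry} then applies in exactly the form in which it is stated. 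Your dispatch of the rank-one cases is unobjectionable, but for $\M_N=M_N^a$ the argument as you propose it does not go through without this change.
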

\begin{proof}
	Let
	\begin{equation*}
		\M_N(z_0)=U \begin{bmatrix}
			\mu_1 & 0\\ 0 & \mu_2
		\end{bmatrix} V
	\end{equation*}
	be the singular value decomposition of $ \M_N(z_0) $. So, $ U $ and $ V $ are unitary and the singular
	values
	are
	\begin{equation*}
		\mu_1=\norm{\M_N(z_0)}\text{ and } \mu_2=\frac{|\det\M_N(z_0)|}{\norm{\M_N(z_0)}}.
	\end{equation*}
	Using Taylor's theorem, \cref{lem:Cauchy-Harnack}, and \cref{lem:mu1-mu2} we get that for
	$ z\in\D(z_0,r_1) $ we have
	\begin{multline*}
		\frac{\norm{\M_N(z)}}{\norm{\M_N(z_0)}}=\norm{\frac{1}{\mu_1}U^{-1}\M_N(z)V^{-1}}\\
		= \norm{\begin{bmatrix}
			1 & 0\\
			0 & \mu_2/\mu_1
		\end{bmatrix}
		+(z-z_0)B}+O(|z-z_0|^2 )r_1^{-2}\\
		=\norm{\begin{bmatrix}
			1 & 0\\
			0 & 0
		\end{bmatrix}
		+(z-z_0)B}+O(|z-z_0|^2 )r_1^{-2}+O(1)\exp(-NL),
	\end{multline*}
	with
	\begin{equation*}
		\norm{B}\lesssim r_1^{-1}.
	\end{equation*}
	It follows that for $ z\in \D(z_0,r_1^{+}) $ we have
	\begin{equation*}
		\log \frac{\norm{\M_N(z)}}{\norm{\M_N(z_0)}}-\log\norm{\begin{bmatrix}
			1 & 0\\
			0 & 0
		\end{bmatrix}
		+(z-z_0)B}=O(|z-z_0|^2 )r_1^{-2}+O(1)\exp(-NL).
	\end{equation*}
	The conclusion now holds due to \cref{lem:matrix-to-entry}.
\end{proof}

\begin{proposition}\label{prop:Jensen-estimate-adjusted}
	Let $ (\omega,E)\in\T_{c,\alpha}\times \C $ be such that $ L(\omega,E)>\gamma>0 $. There
	exists $ N_0(a,b,E,\omega,\gamma) $ such that for $N\ge N_0$, $ |z_0|\ll 1/N $, and
	$ \exp(-N^{1/2-})\lesssim r_0\ll 1/N $ we have
	that if all the entries of $ \M_N(z,\omega,E) $ are either identically zero or have no zeros in
	$ \D(z_0,r_0) $, then for $ x_0\in\T $, $ \epsilon\in (0,1) $, and $ r\le r_1^{1+}/2 $, $ r_1=r_0^{1+} $,
	we have
	\begin{equation*}
		J_\epsilon(\log\norm{\M_N(\cdot,\omega,E)},x_0,r)=O(r^2)r_1^{-2}+O(1)\epsilon^{-2}
		\exp(-NL(\omega,E)).
	\end{equation*}
\end{proposition}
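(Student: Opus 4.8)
The plan is to estimate $J_\epsilon(u,x_0,r)$, where $u:=\log\norm{\M_N(\cdot,\omega,E)}$, by applying \cref{prop:Taylor-for-monodromy} \emph{at the running point $z$ of the outer average} rather than once and for all at $z_0$; this is precisely what produces the gain of a factor $\epsilon^2$ which cancels the $\epsilon^{-2}$ in front of $J_\epsilon$ and yields the clean first error term $O(r^2)r_1^{-2}$. First I record that $\M_N$ does not vanish on $\D(z_0,r_0)$ — indeed $\norm{\M_N}\asymp\norm{\M_N(z_0)}$ there by (the Harnack step in) \cref{lem:Cauchy-Harnack} — so $u$ is continuous and $J_\epsilon(u,x_0,r)$ is well defined; moreover the integration region $\D(x_0,2r)$ is contained in $\D(z_0,r_0)$ (this is the regime in which the proposition is used: $2r\le r_1^{1+}\ll r_0$, so it is a minuscule sub-disk of $\D(z_0,r_0)$), and in fact $\D(z,r_0/2)\subset\D(z_0,r_0)$ for every $z\in\D(x_0,r)$.

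Fix $z\in\D(x_0,r)$. Since $\D(z,r_0/2)\subset\D(z_0,r_0)$ and $|z|\ll 1/N$, I may apply \cref{prop:Taylor-for-monodromy} with center $z$ and base radius $r_0/2$; choosing the implicit ``$1+$'' exponent for this application slightly smaller than the one fixed in the present statement guarantees, for $N$ large, that its conclusion is valid on a disk containing $\D(z,\epsilon r)$ (recall $\epsilon r<r\le r_1^{1+}/2$) and that the resulting factors still match $r_1^{-1}$ and $r_1^{-2}$. Thus for all $\zeta\in\D(z,\epsilon r)$,
\begin{equation*}
	u(\zeta)-u(z)=\log|1+b(z)(\zeta-z)|+R_z(\zeta),\qquad |R_z(\zeta)|\les|\zeta-z|^2r_1^{-2}+\exp(-NL(\omega,E)),
\end{equation*}
with $|b(z)|\les r_1^{-1}$, and the implied constants are uniform in $z\in\D(x_0,r)$ because the constants and $N_0$ in \cref{prop:Taylor-for-monodromy} depend only on $a,b,E,\omega,\gamma$.

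Now I integrate over $\zeta\in\D(z,\epsilon r)$. Since $|b(z)|\,\epsilon r\ll 1$ (as $|b(z)|\les r_1^{-1}$ and $\epsilon r\le r\le r_1^{1+}/2$), the zero $z-1/b(z)$ of $\zeta\mapsto 1+b(z)(\zeta-z)$ lies at distance $\gtrsim r_1\gg\epsilon r$ from $z$, so $\zeta\mapsto\log|1+b(z)(\zeta-z)|$ is harmonic on $\D(z,\epsilon r)$, and by the mean value property its average over $\D(z,\epsilon r)$ equals its value at the center $\zeta=z$, namely $\log 1=0$. Hence, uniformly in $z\in\D(x_0,r)$,
\begin{equation*}
	\dashint_{\D(z,\epsilon r)}\bigl(u(\zeta)-u(z)\bigr)\,dA(\zeta)=\dashint_{\D(z,\epsilon r)}R_z(\zeta)\,dA(\zeta)=O\bigl((\epsilon r)^2r_1^{-2}\bigr)+O(1)\exp(-NL(\omega,E)).
\end{equation*}
The point here — and essentially the only place where the argument is not bookkeeping — is that centering the expansion at $z$ forces the remainder $R_z$ to be controlled at the \emph{inner} scale $\epsilon r$; a single expansion centered at $z_0$ would only yield $O(r^2r_1^{-2})$ for this inner average, and thus lose a factor $\epsilon^{-2}$ at the end.

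Finally, averaging over $z\in\D(x_0,r)$ and multiplying by $4/\epsilon^2$,
\begin{equation*}
	J_\epsilon(u,x_0,r)=\frac{4}{\epsilon^2}\dashint_{\D(x_0,r)}\dashint_{\D(z,\epsilon r)}\bigl(u(\zeta)-u(z)\bigr)\,dA(\zeta)\,dA(z)=\frac{4}{\epsilon^2}\Bigl(O\bigl((\epsilon r)^2r_1^{-2}\bigr)+O(1)\exp(-NL(\omega,E))\Bigr),
\end{equation*}
which equals $O(r^2)r_1^{-2}+O(1)\epsilon^{-2}\exp(-NL(\omega,E))$, as claimed. The main obstacle is exactly the one just flagged: one must resist expanding once about $z_0$ and instead re-center \cref{prop:Taylor-for-monodromy} at each $z\in\D(x_0,r)$, then carry out the short ``$1+$''-exponent bookkeeping that keeps the proposition applicable on every inner disk $\D(z,\epsilon r)$ with constants comparable to $r_1^{-1}$ and $r_1^{-2}$.
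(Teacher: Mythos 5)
Your proof is correct and follows essentially the same route as the paper: the paper's own (terse) argument is precisely \cref{prop:Taylor-for-monodromy} combined with the vanishing of $\dashint_{\D(z,\epsilon r)}\log|1+b(\zeta-z)|\,dA(\zeta)$ by harmonicity and the identity $\dashint_{\D(z,\epsilon r)}|\zeta-z|^2\,dA(\zeta)=\epsilon^2r^2/2$, i.e.\ exactly the expansion re-centered at the running point $z$ that you carry out. Your write-up merely makes explicit the re-centering and the ``$1+$''-exponent bookkeeping that the paper leaves implicit, so the two arguments coincide.
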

\begin{proof}
	The result follows from \cref{prop:Taylor-for-monodromy}, the fact that
	\begin{equation*}
		\dashint_{\D(z,\epsilon r)}\log|1+b(\zeta-z)|\,dA(\zeta)=0
	\end{equation*}
	(due to the mean value property for harmonic functions; it is essential that we have
	$ |b|\lesssim r_1^{-1} $ and $ |\zeta-z|\lesssim r_1^{1+} $) and
	\begin{equation*}
		\dashint_{\D(z,\epsilon r)} |\zeta-z|^2 \,dA(\zeta)=\frac{\epsilon^2 r^2}{2}.
	\end{equation*}
\end{proof}

We will also need an estimate for the case when we don't have further information on the entries of
$ \M_N $. For this we use the following result on the Jensen averages of subharmonic functions.

\begin{lemma}\label{lem:Riesz-Jensen}
	(\cite[Lem. 5.4]{GS-08-Fine}) Let
	\begin{equation*}
		u(z)=\int \log|z-\zeta|\mu(d\zeta)+h(z),\ z\in\Omega,
	\end{equation*}
	where $ h $ is harmonic and $ \mu $ is a non-negative measure on some domain $ \Omega $. Then
	\begin{equation*}
		\mu(\D(z_0,(1-\epsilon)r))\le J_\epsilon(u,z_0,r)\le \mu(\D(z_0,(1+\epsilon)r)),
	\end{equation*}
	for any $ z_0,\epsilon,r $ such that $ \D(z_0,(1+\epsilon)r)\subset \Omega $.
\end{lemma}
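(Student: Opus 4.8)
The plan is to use the Riesz representation of $u$ to split the Jensen average into a potential part and a harmonic part, and to show that the harmonic part contributes nothing while the potential part is exactly a smeared-out version of Jensen's formula. First I would recall that $J_\epsilon(u,z_0,r)$ is linear in $u$, so writing $u=p+h$ with $p(z)=\int\log|z-\zeta|\,\mu(d\zeta)$ and $h$ harmonic, we get $J_\epsilon(u,z_0,r)=J_\epsilon(p,z_0,r)+J_\epsilon(h,z_0,r)$. For the harmonic term, the inner average $\dashint_{\D(z,\epsilon r)}(h(\zeta)-h(z))\,dA(\zeta)=0$ by the mean value property, so $J_\epsilon(h,z_0,r)=0$; this is the easy half and mirrors exactly the computation already used in the proof of \cref{prop:Jensen-estimate-adjusted}.

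The substance is therefore in estimating $J_\epsilon(p,z_0,r)$. Here I would interchange the order of integration (justified by Tonelli, since after the subtraction $\log|\zeta-w|-\log|\tilde z - w|$ one can bound things by Fubini on the positive and negative parts, or simply by approximating $\mu$ by compactly supported pieces) to write
\begin{equation*}
	J_\epsilon(p,z_0,r)=\int \Bigl[\frac{4}{\epsilon^2}\dashint_{\D(z_0,r)}\dashint_{\D(z,\epsilon r)}\bigl(\log|\zeta-w|-\log|z-w|\bigr)\,dA(\zeta)\,dA(z)\Bigr]\mu(dw).
\end{equation*}
The bracketed quantity is precisely $J_\epsilon(\log|\cdot-w|,z_0,r)$, the Jensen average of the function $\log|f|$ for $f(z)=z-w$, which is analytic on all of $\C$. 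By \cref{lem:Jensen-avg-analytic} applied on any large disk containing $\overline{\D(z_0,(1+\epsilon)r)}$, this equals a number sandwiched between $\nu_{z-w}(z_0,(1-\epsilon)r)$ and $\nu_{z-w}(z_0,(1+\epsilon)r)$, i.e.\ between $\ind_{\D(z_0,(1-\epsilon)r)}(w)$ and $\ind_{\D(z_0,(1+\epsilon)r)}(w)$. Integrating these pointwise bounds against $\mu(dw)$ yields
\begin{equation*}
	\mu(\D(z_0,(1-\epsilon)r))\le J_\epsilon(p,z_0,r)\le \mu(\D(z_0,(1+\epsilon)r)),
\end{equation*}
and adding the vanishing harmonic contribution gives the claim.

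The main obstacle is the interchange of integration order and the attendant integrability bookkeeping: $\log|\zeta-w|$ is not integrable against a general non-negative $\mu$ near the diagonal, so one cannot blindly apply Fubini. The clean way around this is to note that $u$ being given by the stated representation on $\Omega$ already presupposes the potential converges; then one exhausts $\mu$ by the restrictions $\mu_n=\mu|_{\{w:\,|w-z_0|\le n\}}$, whose potentials are locally bounded above and for which Fubini applies directly, and passes to the limit using monotone convergence on the (truncated) positive and negative parts of $\log|\zeta-w|-\log|z-w|$ over the compact region of integration. Alternatively, since this lemma is quoted verbatim from \cite[Lem.~5.4]{GS-08-Fine}, one may simply cite that reference; but the argument above is short enough to include, and the only genuinely delicate point is this localization step.
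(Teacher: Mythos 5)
Your argument is correct: the paper itself gives no proof of \cref{lem:Riesz-Jensen} (it is quoted from \cite[Lem.~5.4]{GS-08-Fine}), and your decomposition into potential plus harmonic part, with the harmonic part killed by the mean value property and the potential part reduced via Fubini to the Jensen average of $\log|z-w|$ sandwiched between $\ind_{\D(z_0,(1-\epsilon)r)}(w)$ and $\ind_{\D(z_0,(1+\epsilon)r)}(w)$, is essentially the standard argument behind that citation. You also correctly identify and adequately handle the only delicate point, the justification of the interchange of integration by localizing $\mu$ to compact sets (where $\log|\zeta-w|$ is locally integrable and $\mu$ finite) and noting that the tail contributes a harmonic potential whose Jensen average vanishes.
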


\begin{proposition}\label{prop:Jensen-estimate-general}
	If $ \M_N(z) $ is analytic on a neighbourhood of the closure of $ \H_{\rho_0} $, then there exists
	$ C_0(a,b,E,\rho_0) $ such that
	\begin{equation*}
		0\le J_\epsilon (\log\norm{\M_N(\cdot,\omega,E)},z_0,r)\le C_0 N,
	\end{equation*}
	for any $ z_0,\epsilon,r $ such that $ \D(z_0,(1+\epsilon)r)\subset \H_{\rho_0} $.
\end{proposition}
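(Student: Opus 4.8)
The plan is to realize $J_\epsilon$ as the Riesz mass of a small disk via \cref{lem:Riesz-Jensen} and then bound that mass by $O(N)$ using periodicity. Set $u:=\log\norm{\M_N(\cdot,\omega,E)}$. Since $\M_N$ is analytic on an open neighbourhood $\Omega$ of $\overline{\H_{\rho_0}}$, which we may take connected and $1$-periodic in the real direction (replace it by the union of its integer translates), and since $\M_N\not\equiv0$ (indeed $M_N^a$ is invertible, so even for the two projected choices of $\M_N$ some entry is not identically zero), $u$ is a genuine subharmonic function on $\Omega$ — the logarithm of the operator norm of an analytic matrix function is subharmonic, being an upper envelope of logarithms of moduli of analytic functions — and it is $1$-periodic in $\Re z$ because $a,b,\t b$ are. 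The lower bound $J_\epsilon(u,z_0,r)\ge0$ is immediate: writing
\begin{equation*}
	J_\epsilon(u,z_0,r)=\frac{4}{\epsilon^2}\dashint_{\D(z_0,r)}\left(\dashint_{\D(z,\epsilon r)}u(\zeta)\,dA(\zeta)-u(z)\right)dA(z),
\end{equation*}
each inner parenthesis is nonnegative by the sub-mean value property, which applies since $\D(z,\epsilon r)\subset\D(z_0,(1+\epsilon)r)\subset\H_{\rho_0}\subset\Omega$.

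For the upper bound, fix a bounded connected $\Omega_1$ with $\D(z_0,(1+\epsilon)r)\subset\Omega_1\Subset\Omega$ and write the Riesz decomposition $u(z)=\int\log|z-\zeta|\,\mu(d\zeta)+h(z)$ on $\Omega_1$, with $\mu\ge0$ and $h$ harmonic. By \cref{lem:Riesz-Jensen}, $J_\epsilon(u,z_0,r)\le\mu(\D(z_0,(1+\epsilon)r))$. The Riesz measure $\mu$ (being $\tfrac{1}{2\pi}\Delta u$) inherits the $1$-periodicity of $u$, and $\D(z_0,(1+\epsilon)r)\subset\H_{\rho_0}$ has radius strictly less than $\rho_0$; hence a horizontal integer translation of $z_0$ places the disk inside a fixed bounded box $B=B(\rho_0)$ with $\overline B\Subset\Omega$, and therefore $J_\epsilon(u,z_0,r)\le\mu(B)$, a quantity no longer depending on $z_0,\epsilon,r$.

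It remains to show $\mu(B)\le C_0(a,b,E,\rho_0)N$. Choose once and for all a cutoff $\phi\in C_c^\infty(\Omega)$ with $\mathbf 1_B\le\phi\le\mathbf 1_{\t B}$, where $\t B$ is a fixed slightly larger box with $\overline{\t B}\Subset\Omega$. Then $\mu(B)\le\int\phi\,d\mu=\tfrac{1}{2\pi}\int u\,\Delta\phi\,dA\le C(\rho_0)\norm{u}_{L^1(\t B)}$, with $C(\rho_0)=\tfrac{1}{2\pi}\norm{\Delta\phi}_\infty|\t B|$. So it suffices to prove $\norm{u}_{L^1(\t B)}\le C_0N$, and this rests on two elementary pointwise bounds, both uniform in $\omega$. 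For the upper bound, writing $M_N^a$ as the $N$-fold product of the $2\times2$ matrices appearing in the transfer-matrix formula and estimating each factor by $\sup_{\overline{\H_{\rho_0}}}(|a|+|E|+|\t b|+|b|)$ gives $u\le\log\norm{M_N^a}\le C(a,b,E,\rho_0)N$ on $\overline{\H_{\rho_0}}$, hence $\norm{u^+}_{L^1(\t B)}\le C_0N$. For the lower bound, recall that $\det M_N^a(z,\omega,E)=\exp(\t S_N(z,\omega)+S_N(z+\omega,\omega))$ is nowhere zero; thus $\norm{M_N^a(z)}\ge|\det M_N^a(z)|^{1/2}$, while the smallest singular value $|\det M_N^a(z)|/\norm{M_N^a(z)}$ bounds from below the norm of any row or column of $M_N^a(z)$, so combining with the upper bound we get, in all three cases,
\begin{equation*}
	u(z)\ge \t S_N(z,\omega)+S_N(z+\omega,\omega)-C(a,b,E,\rho_0)N.
\end{equation*}
Since $|\t S_N(z,\omega)|\le\sum_{k=0}^{N-1}|\log|\t b(z+k\omega)||$ and $\log|\t b|$ is $1$-periodic, each summand integrates over $\t B$ to the same finite constant $\int_{\t B}|\log|\t b||\,dA=C(b,\rho_0)$ after a horizontal translation, so $\norm{\t S_N(\cdot,\omega)}_{L^1(\t B)}\le C(b,\rho_0)N$; likewise for $S_N(\cdot+\omega,\omega)$. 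Hence $\norm{u^-}_{L^1(\t B)}\le C_0N$, which finishes the argument.

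The main obstacle is this last step: obtaining a bound on $\norm{\log\norm{\M_N}}_{L^1(\t B)}$ (equivalently, on the number of zeros per period of the entries of $\M_N$) that is linear in $N$ with a constant free of $\omega$ and, crucially, not requiring any positivity hypothesis on the Lyapunov exponent — so that estimates such as \cref{cor:integrability} are unavailable. What makes this work is that $M_N^a$ is invertible with $|\det M_N^a|=\exp(\t S_N+S_N)$, which reduces the delicate lower $L^1$ bound on $\log\norm{\M_N}$ to the trivial estimate $\norm{\t S_N}_{L^1(\t B)},\norm{S_N}_{L^1(\t B)}\lesssim N$. Subharmonicity of $\log\norm{\M_N}$, the Riesz representation, \cref{lem:Riesz-Jensen}, and the periodicity reduction are all routine.
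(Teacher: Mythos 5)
Your proof is correct, and its skeleton coincides with the paper's: both pass from $J_\epsilon$ to the Riesz mass of $\log\norm{\M_N}$ via \cref{lem:Riesz-Jensen} (the nonnegativity being immediate from subharmonicity). The difference is in how the $O(N)$ mass bound is obtained: the paper simply cites \cite[Lem. 3.4]{BV-13-estimate} for $\mu_N(\H_{\rho_0})\le CN$ and stops, whereas you re-derive this input from scratch --- periodicity of $u$ to confine the disk to a fixed box, a smooth cutoff to reduce $\mu(B)$ to $\norm{u}_{L^1(\t B)}$, the product formula for the pointwise upper bound $u\le CN$, and the identity $\det M_N^a=\exp(\t S_N+S_N(\cdot+\omega))$ together with the smallest-singular-value observation to get the lower bound $u\ge \t S_N+S_N(\cdot+\omega)-CN$, hence an $L^1$ bound linear in $N$. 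This buys a self-contained argument whose constants are visibly independent of $\omega$ and which, as you note, needs no positivity of the Lyapunov exponent --- exactly matching the hypotheses of the proposition --- at the cost of a page of elementary estimates that the paper outsources. One cosmetic point: the translate $\t B+k\omega$ is not an integer translate of $\t B$, so the integrals $\int_{\t B+k\omega}|\log|\t b||\,dA$ are not literally all equal to $\int_{\t B}|\log|\t b||\,dA$; by $1$-periodicity they are, however, uniformly bounded by the integral over a box widened by one unit, which is all you use, so the argument stands.
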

\begin{proof}
	Since $ \log\norm{\M_N(z)} $ is subharmonic it admits a Riesz representation:
	\begin{equation*}
		\log{\norm{\M_N(z)}}=\int \log|\zeta-z|\,\mu_N(d\zeta)+h_N(z),
	\end{equation*}
	where $ \mu_N $ is a positive measure and $ h_N $ is harmonic. It is known that
	\begin{equation*}
		\mu_N(\H_{\rho_0})\le CN.
	\end{equation*}
	For a proof we refer to \cite[Lem. 3.4]{BV-13-estimate}. Now the conclusion follows from
	\cref{lem:Riesz-Jensen}.
\end{proof}

\section{Multiscale Counting of Zeroes}\label{sec:multiscale-count}

Given an interval $ \Lambda $ together with a partition into intervals $ \{ \Lambda_j \} $, $ j=1,\ldots,m $
(ordered from leftmost to rightmost) it's easy to see that
\begin{equation*}
	M_\Lambda^a=\prod_{j=m}^1 M_{\Lambda_j}^a.
\end{equation*}
Such a factorization doesn't hold for $ f_\Lambda^a $, but an approximation of this relation is available by
using the Avalanche Principle. This allows one to relate the number of zeroes of $ f_\Lambda $ to that
of $ f_{\Lambda_j} $, $ j=1,\ldots,m $. This is achieved by using Jensen averages and it is
therefore crucial to control the Jensen averages of the extraneous terms that result from the application of
the Avalanche Principle. For this it is natural to introduce the following notion.
\begin{definition}
	We say that $ s\in \Z $ is {\em adjusted} to $ (\D(z_0,r_0),\omega,E) $ at scale $ l $ if for all
	$ l\le k \le 100 l $ and $ |m|\le 100 $ all the entries of $ M_l^a(\cdot+(s+m)\omega,\omega,E) $ have
	no zeros in $ \D(z_0,r_0) $.
\end{definition}
Note that if $ s $ is adjusted then by the results of the previous section we have good control on the
Jensen averages of $ \log\norm{M_{\Lambda'}^a} $, where $ \Lambda' $ can be any interval of size
$ l\le |\Lambda'|\le 100l $ that is ``sufficiently close'' to $ s $. The notion of being adjusted is useful because we can find many adjusted integers.
\begin{lemma}\label{lem:adjust}
	Let $ (\omega,E)\in \T_{c,\alpha}\times \C $, $ x_0\in \T $ and $ n_0\in \Z $. Given $ l\gg 1 $ and
	$ r_0=\exp(-(\log l)^A) $, $ A>1 $, there exists $ n_0'\in[n_0-l^6,n_0+l^6] $ such that $ n_0' $ is adjusted to
	$ (\D(x_0,r_0),\omega,E) $ at scale $ l $.
\end{lemma}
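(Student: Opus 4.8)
The plan is to produce the adjusted integer by a counting/measure argument: in the window $n_0+[-l^6,l^6]$ there are $\sim 2l^6$ candidate shifts $s$, and for each we must rule out the possibility that some entry of some $M_k^a(\cdot+(s+m)\omega,\omega,E)$ with $l\le k\le 100l$, $|m|\le 100$ has a zero in $\D(x_0,r_0)$. The total number of (entry, $k$, $m$) triples is $O(l)$, so it suffices to show that for each fixed such triple the set of ``bad'' shifts $s$ in the window is a tiny fraction, of size $o(l^6/l)=o(l^5)$, so that the union of all bad sets still misses some $s$. Concretely, fix an entry $g$ of $M_k^a$ (a not-identically-zero analytic function; the zero entries are harmless) and fix $m$; I want to bound the number of integers $s$ with $g(\cdot+(s+m)\omega,\omega,E)$ vanishing somewhere in $\D(x_0,r_0)$.

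First I would use \cref{prop:zero-count-log}, or rather the reasoning behind it via the Jensen formula, to control the total number of zeros of $g(\cdot,\omega,E)$ in a disk of radius $1/k\sim 1/l$ about $x_0$: that count is at most $(\log l)^{C_0}$. The shifts $z\mapsto z+(s+m)\omega$ translate $x_0$ by $(s+m)\omega$ along $\T$, and since $\omega\in\T_{c,\alpha}$ the points $\{(s+m)\omega \bmod 1: |s-n_0|\le l^6\}$ are quantitatively separated: any two distinct ones are at distance $\ge c/(l^6(\log l^6)^\alpha)\gg r_0=\exp(-(\log l)^A)$ apart (here is where $r_0$ being merely exp-of-a-power-of-log, rather than polynomially small, is essential). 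Hence the disks $\D(x_0-(s+m)\omega,r_0)$ are pairwise disjoint as $s$ ranges over the window, for each fixed $m$, and they all sit inside a fixed disk of radius $O(1)$ — indeed, after shrinking, inside $\D(x_0',1/l)$ for an appropriate recentering, or one covers $\T$ by $O(l)$ disks of radius $1/l$ and applies the count on each. Summing the zero counts, the number of $s$ for which $g(\cdot+(s+m)\omega)$ has a zero in $\D(x_0,r_0)$ is at most $O(l)\cdot(\log l)^{C_0}$, which is $\ll l^6/l = l^5$ with room to spare.

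Then I would union over the $O(l)$ choices of entry $g$, scale $k\in[l,100l]$, and shift $m\in[-100,100]$: the total number of bad $s$ in the window is $O(l)\cdot O(l)\cdot(\log l)^{C_0}=O(l^2(\log l)^{C_0})\ll l^6$, so for $l\ge l_0(a,b,E,\omega,\gamma)$ large enough there remains at least one $s=n_0'\in[n_0-l^6,n_0+l^6]$ that is not bad, i.e. is adjusted to $(\D(x_0,r_0),\omega,E)$ at scale $l$. (One should check that \cref{prop:zero-count-log} applies uniformly for all $k\in[l,100l]$ with the same $N_0$ and $C_0$; this is standard since its constants depend on the data only through $\gamma$, and the Lyapunov exponent lower bound is uniform on the relevant range once $l$ is large.)

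The main obstacle — or rather the point requiring care — is the interplay between the window size $l^6$, the Diophantine separation, and the radius $r_0$: one needs $r_0$ small compared to the minimal gap $c/(l^6(\log l^6)^\alpha)$ among the $2l^6$ translated centers so that the translated disks are genuinely disjoint and the zero-counting can be added without overlap, and simultaneously $r_0$ small enough (and $l$ large enough) that \cref{prop:zero-count-log}'s hypotheses and \cref{lem:ldt-Cartan}-type bounds used downstream are met; the choice $r_0=\exp(-(\log l)^A)$ with $A>1$ is exactly what makes $r_0\ll l^{-6-}$ while keeping $(\log r_0)^2=(\log l)^{2A}$ sub-polynomial in $l$, and the power $6$ in $l^6$ is comfortably larger than the $2+o(1)$ coming from the union bound, so there is no real tension once the bookkeeping is done. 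I would also double check the claim that translating the base point by $(s+m)\omega$ and counting zeros of the original $g$ is equivalent to counting zeros of the shifted $g$ at the fixed base point $x_0$ — this is immediate from the definition $M_k^a(z+(s+m)\omega)=M_k^a$ evaluated at the shifted argument — and that the finitely many disks needed to cover $\T$ each contribute at most $(\log l)^{C_0}$ zeros, so the final bound is $O(l(\log l)^{C_0})$ per triple as claimed.
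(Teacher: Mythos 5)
Your counting skeleton is essentially the one used in the proof the paper defers to (\cite[Lem. 9.7]{GS-11-resonances}): for each triple (entry, scale $k\in[l,100l]$, shift $|m|\le 100$) the Diophantine condition makes the translated disks $\D(x_0+(s+m)\omega,r_0)$ pairwise disjoint as $s$ runs over the window, since the minimal gap $c/(2l^6(\log 2l^6)^{\alpha})$ exceeds $2r_0=2\exp(-(\log l)^A)$ precisely because $A>1$; hence each zero of the fixed entry near $\T$ disqualifies at most one $s$, and a union bound over the $O(l)$ triples leaves admissible shifts in $[n_0-l^6,n_0+l^6]$ with a large margin. That structure is correct.

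The one genuine defect is the zero-count input. \cref{prop:zero-count-log} is not available in the generality in which \cref{lem:adjust} is stated: its constants depend on $\gamma$, i.e.\ it presupposes $L(\omega,E)>\gamma>0$ (its proof runs through the large deviation estimate and the uniform upper bound $NL^a+(\log N)^{C_0}$), whereas \cref{lem:adjust} is asserted for every $(\omega,E)\in\T_{c,\alpha}\times\C$ with no positivity hypothesis --- your own caveat about ``the Lyapunov exponent lower bound is uniform'' appeals to an assumption the lemma does not grant you. The repair is to use the cruder, positivity-free bound: each not-identically-zero entry of $M_k^a(\cdot,\omega,E)$ is, up to $O(1)$ zeros coming from the $b$ and $\t b$ factors, a Dirichlet determinant, and the total number of its zeros in $\H_{\rho_0}$ is at most $Ck$ by the Riesz-mass bound $\mu_N(\H_{\rho_0})\le CN$ of \cite[Lem. 3.4]{BV-13-estimate} (the same fact quoted in \cref{prop:Jensen-estimate-general} and \cref{rem:d_0}). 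This gives $O(l)$ bad shifts per triple and $O(l^2)$ in total, still far below $l^6$, and requires nothing about $L$. A smaller slip: the translated centers $x_0+(s+m)\omega$ equidistribute over $\T$, so they do not all sit in one disk $\D(x_0',1/l)$ after any recentering; the alternative you give --- cover $\T$ by $O(l)$ disks of the appropriate radius and sum the counts (or, with the fix above, simply count all zeros in $\H_{\rho_0}$ at once) --- is the version the argument should use.
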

For the proof we refer to \cite[Lem. 9.7]{GS-11-resonances}.

We can now prove the result on multiscale counting of zeroes.
\begin{proposition}\label{prop:multiscale-Jensen}
	Let $ (\omega,E)\in\T_{c,\alpha}\times \C $ such that $ L(\omega,E)>\gamma>0 $ and let
	$ A>1$. Let $ \Lambda_j $, $ j=1,\ldots,m $ be pairwise disjoint intervals such that their union
	$ \Lambda $ is also an interval, and $ l\ll|\Lambda_j|\le l^A $. There exists
	$ l_0=l_0(a,b,\omega,E,\gamma,A) $ such that if $ l\ge \max(l_0,(\log m)^{1+})$
	and all but $ k $ of the intervals
	$ \Lambda_j $ have the endpoints adjusted to $ (\D(x_0,r_0),\omega,E) $ at scale $ l $,
	$ x_0\in \T $, $ \exp(-l^{1/2-})\lesssim r_0\ll 1/l $, then
	\begin{equation*}
		J_\epsilon(\log|f_\Lambda^a|,x_0,r)-\sum_{j=1}^m J_\epsilon(\log|f_{\Lambda_j}^a|,x_0,r)
		=O(1)\epsilon^{-4}r^{-2}\exp(-l^{1-})+(m-k) O(r^2)r_1^{-2}
		+kO(1)C_0l,
	\end{equation*}
	with $ C_0=C_0(a,b,\omega,E,\gamma) $, and for any $ \epsilon\in(0,1) $, $ r\le r_1^{1+}/2 $,
	$ r_1=r_0^{1+} $.
\end{proposition}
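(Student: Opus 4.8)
The strategy is to apply the Avalanche Principle (\cref{lem:AP-determinant}) to the factorization of $ f_\Lambda^a $ induced by the partition $ \{\Lambda_j\} $, then take Jensen averages of both sides of the resulting identity and estimate each term. First I would verify the hypotheses of \cref{lem:AP-determinant} hold at a generic point $ z\in\D(x_0,r) $: since $ r\le r_1^{1+}/2 $ and $ |x_0|\ll 1/l $, the point $ z $ lies in $ \H_{(2l^A)^{-1}} $; the large deviations estimate for each $ f_{\Lambda_j}^a(z) $ and $ f_{\Lambda_j\cup\Lambda_{j+1}}^a(z) $ with an appropriate choice $ H\sim l^{1-} $ follows from \cref{cor:ldt-determinant} combined with the fact that these are fixed analytic functions (one must be slightly careful that the LDT is being applied on the disk $\D(x_0,r)$ rather than pointwise, but since $\log|f_{\Lambda_j}^a|$ is subharmonic and $r$ is tiny, the bound transfers). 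The Avalanche Principle then yields, for $ z\in\D(x_0,r) $,
\begin{equation*}
	\log|f_\Lambda^a(z)| = \sum_{j=1}^{m-1}\log\norm{A_{j+1}(z)A_j(z)} - \sum_{j=2}^{m-1}\log\norm{A_j(z)} + O(m)\exp(-\gamma l/2),
\end{equation*}
where $ A_j=M_{\Lambda_j}^a $ for interior $ j $ and $ A_1,A_m $ are the one-sided truncations.

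Next I would apply the operator $ J_\epsilon(\cdot,x_0,r) $ to this identity. The error term $ O(m)\exp(-\gamma l/2) $ contributes $ O(1)\epsilon^{-2}m\exp(-\gamma l/2) $ to the Jensen average (a constant times the $ L^\infty $ bound, from the definition of $ J_\epsilon $), which since $ l\ge(\log m)^{1+} $ is absorbed into $ O(1)\epsilon^{-4}r^{-2}\exp(-l^{1-}) $. For the sum $ \sum_j J_\epsilon(\log|f_{\Lambda_j}^a|,x_0,r) $ appearing on the left-hand side of the claimed identity, I would use the matrix identity \cref{eq:Ma-fa} to write $ \log|f_{\Lambda_j}^a| $ in terms of $ \log\norm{M_{\Lambda_j}^a\cdot\Mat} $ plus harmless corrections coming from the entries of $ \Mat M_{\Lambda_j}^a M_{\Lambda_j}^a\Mat $ and from $ \log|b|,\log|\t b| $ factors; more precisely one relates $ J_\epsilon(\log|f_{\Lambda_j}^a|) $ to $ J_\epsilon(\log\norm{A_j}) $ up to controlled terms. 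The key point is that for the $ m-k $ good intervals, the endpoints are adjusted, so by \cref{prop:Jensen-estimate-adjusted} each $ J_\epsilon(\log\norm{M_{\Lambda'}^a},x_0,r) $ with $ l\le|\Lambda'|\le 100l $ is $ O(r^2)r_1^{-2}+O(1)\epsilon^{-2}\exp(-lL) $; applied to the pairs $ \Lambda_j\cup\Lambda_{j+1} $ and singletons $ \Lambda_j $ one must decompose the intervals of size $ l^A $ into blocks of size $ \sim l $ (using $ M_{\Lambda_j}^a = \prod M_{\text{block}}^a $ and subadditivity/superadditivity of $ \log\norm{\cdot} $ under such products) and invoke \cref{prop:Jensen-estimate-adjusted} on each block, which is where the $ \epsilon^{-4} $ and the $ l^{1-} $ exponents ultimately come from. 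For the $ k $ bad intervals, one falls back on \cref{prop:Jensen-estimate-general}, contributing $ O(1)C_0 l $ per interval, hence $ kO(1)C_0 l $ total. Collecting $ J_\epsilon(\log\norm{A_{j+1}A_j}) $, $ J_\epsilon(\log\norm{A_j}) $, and the $ J_\epsilon(\log|f_{\Lambda_j}^a|) $ terms, the good contributions accumulate to $ (m-k)O(r^2)r_1^{-2} $ and everything else fits into the stated error.

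The main obstacle I anticipate is the bookkeeping in passing between the Dirichlet determinants $ f_{\Lambda_j}^a $ and the matrix norms $ \norm{M_{\Lambda_j}^a} $ (and the truncated versions $ A_j $), and in handling intervals of length up to $ l^A $ using only the adjustedness at scale $ l $: one has to chop each $ \Lambda_j $ into $ O(l^{A-1}) $ sub-blocks of length $ \sim l $, apply the Avalanche Principle (or rather the second conclusion of \cref{lem:AP-determinant}) within $ \Lambda_j $ to reduce $ \log\norm{M_{\Lambda_j}^a} $ to a sum of $ J_\epsilon(\log\norm{M_{\text{block}}^a}) $ and $ J_\epsilon(\log\norm{M_{\text{block}}^a M_{\text{block}}^a}) $ terms, each of which is controlled by \cref{prop:Jensen-estimate-adjusted}, and then carefully track how the polynomial-in-$ l $ number of such blocks interacts with the exponentially small errors — this is exactly why the error $ \exp(-l^{1-}) $ (rather than $ \exp(-\gamma l) $) and the powers $ \epsilon^{-4} $, $ r^{-2} $ appear. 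The powers of $ l $ are harmless against $ \exp(-cl) $ provided $ l\ge l_0 $ and $ l\ge(\log m)^{1+} $, which is precisely the hypothesis.
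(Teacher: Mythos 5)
There is a genuine gap, and it sits exactly at the step you flagged as the ``main obstacle''. Your plan applies the Avalanche Principle to $f_\Lambda^a$ with the coarse partition $\{\Lambda_j\}$, so the surviving terms $\log\norm{A_j}$, $\log\norm{A_{j+1}A_j}$ live at scale up to $l^A$ (and $2l^A$), far beyond the range $[l,100l]$ where adjustedness of the endpoints gives any information. To control these you propose chopping each $\Lambda_j$ into $O(l^{A-1})$ blocks of length $\sim l$ and invoking \cref{prop:Jensen-estimate-adjusted} on each block; but adjustedness is a hypothesis only about matrices anchored within a bounded distance of the \emph{endpoints} of the $\Lambda_j$'s, so the interior blocks may well have entries vanishing in $\D(x_0,r_0)$ and \cref{prop:Jensen-estimate-adjusted} simply does not apply to them. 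Falling back on \cref{prop:Jensen-estimate-general} for interior blocks costs $O(l)$ per block, i.e.\ $O(l^A)$ per interval, which destroys the claimed bound $(m-k)O(r^2)r_1^{-2}$. Moreover, the reduction of $\log\norm{M_{\Lambda_j}^a}$ to block terms via ``subadditivity/superadditivity'' is only an inequality, not an identity with exponentially small error, so it cannot be pushed through the Jensen averages; and the intermediate step of trading $J_\epsilon(\log|f_{\Lambda_j}^a|)$ for $J_\epsilon(\log\norm{A_j})$ via \cref{eq:Ma-fa} involves differences that are \emph{not} individually small (they encode zero counts of entries of large-scale matrices), so the result cannot be assembled from term-by-term estimates.

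The missing idea is a cancellation, not a term-by-term bound. In the paper each $\Lambda_j$ is refined into five subintervals with the two outermost pieces on each side of length exactly $l$, and the Avalanche Principle expansion is applied \emph{both} to $\log|f_\Lambda^a|$ (with the full refined partition) \emph{and} to each $\log|f_{\Lambda_j}^a|$ (with its own five-piece partition). When the two expansions are subtracted, all terms attached to interior blocks cancel, and what remains is $\sum\pm\log\norm{A_{\Lambda'}}$ with $|\Lambda'|\in\{l,2l\}$ and $\Lambda'$ containing an endpoint of some $\Lambda_j$ --- precisely the scales and locations where adjustedness applies, so \cref{prop:Jensen-estimate-adjusted} gives $O(r^2)r_1^{-2}$ per good endpoint and \cref{prop:Jensen-estimate-general} gives $O(C_0 l)$ per bad one, yielding the stated $(m-k)O(r^2)r_1^{-2}+kO(1)C_0l$. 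A secondary inaccuracy: the large deviations estimate does not ``transfer'' to every point of $\D(x_0,r)$ by subharmonicity (lower bounds do not propagate that way); the identity holds only off an exceptional set $\mc B$ with $\mes(\mc B)\le\exp(-l^{1-})$, and its contribution to the Jensen averages must be controlled through the $L^p$ integrability of $\log|f^a|$ (\cref{cor:integrability}), which is where the factor $\epsilon^{-4}r^{-2}\exp(-l^{1-})$ actually comes from.
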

\begin{proof}
	The proof is essentially the same as for \cite[Prop. 9.3]{GS-11-resonances}. We partition each
	$ \Lambda_j $ into five intervals $ \Lambda_j^{(i)} $, $ i=1,\ldots,5 $ such that
	$ |\Lambda_j^{(i)}|=l $ for $ i\neq 3 $. Applying the Avalanche Principle expansion to
	$ \log|f_\Lambda^a| $, $ \log|f_{\Lambda_j}^a| $ (i.e. using \cref{lem:AP-determinant} and
	\cref{prop:ldt-determinant}) we get
	\begin{equation*}
		\log|f_\Lambda^a(z)|-\sum_{j=1}^m \log|f_{\Lambda_j}^a(z)|=\sum\pm \log\norm{A_{\Lambda'}(z)}
			+O(1)\exp(-cl),
	\end{equation*}
	for $ z\in \D(z_0,r_0)\setminus \mc B $, $ \mes(\mc B)\le \exp(-l^{1-}) $, with $ A_{\Lambda'}(z) $
	of
	the form
	\begin{equation*}
		M_{\Lambda'}^a(z),\quad \begin{bmatrix}
			1 & 0 \\
			0 & 0
		\end{bmatrix}M_{\Lambda'}^a(z),\text{ or }
		M_{\Lambda'}^a(z)\begin{bmatrix}
			1 & 0 \\
			0 & 0
		\end{bmatrix},
	\end{equation*}
	where $ \Lambda' $ is an interval of length $ l $ or $ 2l $ containing an endpoint of one the intervals
	$ \Lambda_j $. By using \cref{cor:integrability} it follows that
	\begin{equation}\label{eq:Jensen-AP}
		J_\epsilon(\log|f_\Lambda^a|,z_0,r)-
		\sum_{j=1}^m J_\epsilon(\log|f_{\Lambda_j}^a|,z_0,r)
		=\sum\pm J_\epsilon(\log\norm{A_{\Lambda'}},z_0,r)
			+O(1)\epsilon^{-4}r^{-2}\exp(-l^{1-}).
	\end{equation}
	Indeed we have
	\begin{multline*}
		J_\epsilon(\log|f_\Lambda^a|,z_0,r)\\
		=\frac{4}{\pi^2\epsilon^4 r^4}\int_{\D(x_0,r)} \int_{\D(z,\epsilon r)}
			\log|f_\Lambda^a(\zeta)|\,dA(\zeta)\,dA(z)
		- \frac{4}{\pi \epsilon^2 r^2}\int_{\D(x_0,r)}\log|f_\Lambda^a(z)|\,dA(z),
	\end{multline*}
	\begin{multline*}
		\frac{4}{\pi^2\epsilon^4 r^4}\int_{\D(x_0,r)} \int_{\D(z,\epsilon r)\cap \B}
			|\log|f_\Lambda^a(\zeta)||\,dA(\zeta)\,dA(z)\\
		\lesssim \frac{1}{\epsilon^4 r^4}\int_{\D(x_0,r)} Cml^A \sqrt{|\B|}\,dA(z)
		\lesssim \frac{1}{\epsilon^4 r^2}\exp(-l^{1-}),
	\end{multline*}
	and
	\begin{equation*}
		\frac{4}{\pi \epsilon^2 r^2}\int_{\D(x_0,r)\cap \B}|\log|f_\Lambda^a(z)||\,dA(z)
		\lesssim \frac{1}{\epsilon^2 r^2} Cml^A \sqrt{|\B|}
		\lesssim \frac{1}{\epsilon^2 r^2} \exp(-l^{1-}).
	\end{equation*}
	Note that we used the assumption that $ l\ge (\log m)^{1+} $. The other terms are dealt with in the same
	way.
	
	The conclusion follows immediately by applying either \cref{prop:Jensen-estimate-adjusted} or
	\cref{prop:Jensen-estimate-general} to the averages on the right-hand side of \cref{eq:Jensen-AP}.
\end{proof}

\section{Count of Zeroes in a Small Disk}\label{sec:zero-count}

We will show in \cref{prop:zero-count-const} that if $ \Lambda $ has adjusted endpoints then we can use
\cref{prop:multiscale-Jensen} to obtain a bound on the number of zeroes of $ f_\Lambda^a $. The idea is
simply that the zeroes on $ \Lambda $ can be shifted around resulting in more zeroes at a larger scale. The
assumption that $ a,b $ are trigonometric polynomials comes into play via the fact that in this case
$ f_N^a(\cdot,\omega,E) $ is a rational function of degree at most $ 2d_0N $. This is easily seen from
\cref{eq:H_N}.

We will be using the following known results on the equidistribution of the orbit of an irrational shift.
\begin{lemma}\label{lem:discrepancy}
	Let $ \omega\in \T_{c,\alpha} $ and $ N>1 $. There exists a constant
	$ C_0(\omega) $ such that for any interval $ I\subset \T $ we have
	\begin{equation*}
		\# \{ m\in[0,N-1]: m\omega \in I  \}=N|I|+O(1)C_0(\log N)^{\alpha+2}.
	\end{equation*}
\end{lemma}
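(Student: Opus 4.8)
The plan is to deduce this from the Erdős–Turán inequality, which reduces the discrepancy of the orbit $\{m\omega\}_{m=0}^{N-1}$ to an exponential sum bound. Write $e(t)=e^{2\pi i t}$. Recall that for every integer $K\ge 1$,
\begin{equation*}
	D_N:=\sup_{I\subset\T}\left| \tfrac1N\#\{m\in[0,N-1]:m\omega\in I\}-|I| \right|
	\lesssim \frac1K+\frac1N\sum_{k=1}^K\frac1k\left| \sum_{m=0}^{N-1}e(km\omega) \right|,
\end{equation*}
so that $\#\{m\in[0,N-1]:m\omega\in I\}=N|I|+O(ND_N)$ for every arc $I\subset\T$. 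Summing the geometric series and using $|\sin\pi t|\ge 2\norm{t}$ gives $\left| \sum_{m=0}^{N-1}e(km\omega) \right|\le (2\norm{k\omega})^{-1}$, so it suffices to prove
\begin{equation*}
	\sum_{k=1}^K\frac{1}{k\norm{k\omega}}\lesssim_\omega (\log K)^{\alpha+2}.
\end{equation*}
Taking $K=\lceil N\rceil$ then yields $D_N\lesssim_\omega N^{-1}(\log N)^{\alpha+2}$, which is the claim with $C_0(\omega)$ absorbing the implied constant.

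To estimate this sum I would split $[1,K]$ into dyadic blocks $B_j=(2^{j-1},2^j]$, $j=1,\dots,\lceil\log_2 K\rceil$. For distinct $k,k'\in B_j$ one has $1\le|k-k'|<2^{j-1}$, so the strong Diophantine condition gives $\norm{(k-k')\omega}\gtrsim_\omega 2^{-j}j^{-\alpha}$ (using $\log|k-k'|\lesssim j$); hence the points $\{k\omega:k\in B_j\}$ are $\delta_j$-separated on $\T$ with $\delta_j\gtrsim_\omega 2^{-j}j^{-\alpha}$. Ordering the $k\in B_j$ so that $\norm{k_1\omega}\le\norm{k_2\omega}\le\cdots$, separation on the circle forces $\norm{k_i\omega}\gtrsim(i-1)\delta_j$ for $i\ge 2$, while for the single nearest point the Diophantine condition applied directly to $k_1\le 2^j$ gives $\norm{k_1\omega}\gtrsim_\omega 2^{-j}j^{-\alpha}$. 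Therefore
\begin{equation*}
	\sum_{k\in B_j}\frac{1}{\norm{k\omega}}\lesssim_\omega 2^j j^\alpha+\frac{1}{\delta_j}\sum_{i=2}^{2^{j-1}}\frac{1}{i-1}\lesssim_\omega 2^j j^{\alpha+1},
\end{equation*}
and since $1/k\le 2^{-(j-1)}$ on $B_j$ each block contributes $\lesssim_\omega j^{\alpha+1}$; summing $\sum_j j^{\alpha+1}\lesssim (\log K)^{\alpha+2}$ finishes the estimate.

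The step that requires care is the separation-plus-ordering argument: because $\T$ rather than $\R$ is involved, one must note that among $\delta_j$-separated points on the circle the $i$-th closest to $0$ sits at distance $\gtrsim\lfloor(i-1)/2\rfloor\delta_j$, and one must bound the single closest point $k_1$ in each block by the Diophantine inequality itself, not by the separation bound. The smallest values of $j$, where $\log|k-k'|$ is not comparable to $j$, are handled trivially, since those blocks contain $O(1)$ integers, each at distance $\gtrsim_\omega 1$ from $0$ by irrationality; everything else is a routine summation. (Alternatively, one can simply cite a standard reference for the discrepancy of sequences $\{n\omega\}$ under a Diophantine hypothesis on $\omega$; the bound above is exactly what the Erdős–Turán inequality yields from the condition $\omega\in\T_{c,\alpha}$.)
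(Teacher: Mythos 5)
Your proposal is correct and follows exactly the route the paper takes: the paper proves this lemma by invoking the Erd\H{o}s--Tur\'an inequality together with the Diophantine condition on $\omega$, citing Kuipers--Niederreiter for the resulting discrepancy bound for irrational shifts. You have simply written out the details of that same argument (the exponential-sum bound and the dyadic-block summation of $\sum 1/(k\norm{k\omega})$), and your bookkeeping, including the separation argument on the circle and the treatment of the closest point in each block, checks out.
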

This lemma is a consequence of the Erd\"os-Tur\'an theorem on the discrepancy of a sequence of
real numbers, and of the Diophantine condition imposed on $ \omega $.
See \cite[Lem. 2.3.2-3]{KN-74-Uniform-distrib} for the resulting estimates for irrational shifts that yield 
the above lemma as a particular case. 
\begin{corollary}\label{cor:equidistribution}
	Let $ \omega\in \T_{c,\alpha} $ and $ N>1 $.
	There exists $ C_0(\omega) $ such that the distance between any
	two consecutive points of the set $ \{ m\omega: m\in[0,N-1] \}\subset \T $ is between
	$ cN^{-1}(\log N)^{-\alpha} $ and $ C_0 N^{-1}(\log N)^{\alpha+2} $.
\end{corollary}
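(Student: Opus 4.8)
The plan is to derive both the upper and lower bounds on the gap lengths directly from the discrepancy estimate in \cref{lem:discrepancy}. Suppose $\{m\omega: m\in[0,N-1]\}$, once ordered cyclically on $\T$, has a gap $J$ between two consecutive points. Then $J$ is an interval of length $|J|$ containing no point $m\omega$ with $m\in[0,N-1]$, so applying \cref{lem:discrepancy} to $I=J$ gives
\begin{equation*}
	0=\#\{m\in[0,N-1]: m\omega\in J\}=N|J|+O(1)C_0(\log N)^{\alpha+2},
\end{equation*}
which forces $|J|\le C_0' N^{-1}(\log N)^{\alpha+2}$ for a suitable constant $C_0'(\omega)$. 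This settles the upper bound.

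For the lower bound I would exploit that a short gap between two of the points forces two shifts to be close together. Concretely, if $m_1\omega$ and $m_2\omega$ are consecutive with $m_1\ne m_2$, $m_1,m_2\in[0,N-1]$, then $\|(m_1-m_2)\omega\|=|J|$ with $|m_1-m_2|\le N-1$, so the Diophantine condition $\omega\in\T_{c,\alpha}$ yields
\begin{equation*}
	|J|=\|(m_1-m_2)\omega\|\ge \frac{c}{|m_1-m_2|(\log|m_1-m_2|)^\alpha}\ge \frac{c}{N(\log N)^\alpha}
\end{equation*}
(adjusting constants for small $|m_1-m_2|$, where $\log$ is bounded below). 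This gives the claimed lower bound $cN^{-1}(\log N)^{-\alpha}$. An alternative route for the lower bound, if one prefers to stay within the discrepancy framework: a gap that is too short would, by the three-gap (Steinhaus) theorem, force many gaps of comparable size, and then summing over a suitable sub-collection and applying \cref{lem:discrepancy} to their union would contradict the count; but the direct Diophantine argument above is cleaner.

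I do not expect a serious obstacle here, since both bounds are essentially immediate: the upper bound is a one-line consequence of \cref{lem:discrepancy} applied to an empty interval, and the lower bound is a one-line consequence of the strong Diophantine condition. The only minor care needed is in handling the constants for small index differences $|m_1-m_2|$ (where $(\log|m_1-m_2|)^\alpha$ is not bounded away from zero), which is absorbed by shrinking $c$, and in making sure the ordering is understood cyclically on $\T$ so that the ``first'' and ``last'' points also bound a genuine gap.
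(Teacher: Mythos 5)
Your proposal is correct and follows the same route the paper intends: the paper's proof is just the remark that the corollary is an immediate consequence of \cref{lem:discrepancy} (applied as you do, to an empty gap interval, giving the upper bound) and of the Diophantine condition on $\omega$ (giving the lower bound via $\|(m_1-m_2)\omega\|$). Your handling of the constant for small index differences is exactly the minor adjustment needed, so nothing is missing.
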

This is an immediate consequence of the previous lemma and the of the Diophantine condition.

\begin{proposition}\label{prop:zero-count-const}
	Let $ (\omega,E)\in\T_{c,\alpha}\times \C $ such that $ L(\omega,E)>\gamma>0 $ and let
	$ A>1$. If the endpoints of $ \Lambda $
	are adjusted to $ (\D(x_0,r_0),\omega,E) $ at scale $ l $, $ \exp(-l^{1/2-})\lesssim r_0\ll 1/l $,
	$ |\Lambda|^{1/A}\le l\ll |\Lambda| $, then $ f_{\Lambda}^a(\cdot,\omega,E) $ has at most $ 2d_0 $ zeroes
	in $ \D(x_0,r_0\exp(-(\log l)^{C_0})) $, for some $ C_0=C_0(a,b,\omega,E,A) $, provided
	$ l\ge l_0(a,b,\omega,E,A) $.
\end{proposition}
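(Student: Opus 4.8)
The strategy is the "shifting zeroes" argument from \cite[Sec.\ 9]{GS-11-resonances}, now made quantitative using the degree bound on $f_N^a$. Suppose, for contradiction, that $f_\Lambda^a(\cdot,\omega,E)$ has at least $2d_0+1$ zeroes in the small disk $\D(x_0,r_0')$ with $r_0'=r_0\exp(-(\log l)^{C_0})$. Using \cref{lem:Jensen-avg-analytic}, this means $J_\epsilon(\log|f_\Lambda^a|,x_0,r)\ge 2d_0+1$ for a suitable $r\asymp r_0'$ and $\epsilon$ bounded away from $0$ and $1$. The plan is to translate this lower bound, via the multiscale Jensen average identity of \cref{prop:multiscale-Jensen}, into a lower bound on $\sum_j J_\epsilon(\log|f_{\Lambda_j}^a|,x_0,r)$ for a cleverly chosen partition $\{\Lambda_j\}$ of $\Lambda$, and then iterate: at each stage the total Jensen average over the pieces is at least (roughly) the Jensen average of the union, but $f_{\Lambda_j}^a$ is a rational function of degree at most $2d_0|\Lambda_j|$, so the number of zeroes it can carry in any disk grows only linearly in $|\Lambda_j|$ — whereas the number we are forced to accumulate grows multiplicatively, producing a contradiction once $|\Lambda|$ is large relative to $l$.

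Concretely, first I would fix the scale $l$ and choose a partition of $\Lambda$ into $m\asymp|\Lambda|/l$ subintervals $\Lambda_j$ of comparable length, with $m$ of the form needed so that $l\ge(\log m)^{1+}$ holds (this is where the hypothesis $|\Lambda|^{1/A}\le l\ll|\Lambda|$ is used: $m\le l^{A-1}$-ish, so $(\log m)^{1+}\ll l$). Using \cref{lem:adjust} I can perturb all the internal endpoints of the partition to be adjusted to $(\D(x_0,r_0),\omega,E)$ at scale $l$, at the cost of moving each endpoint by at most $l^6\ll l^A$, so the subintervals keep lengths $\asymp|\Lambda_j|$. Since the two endpoints of $\Lambda$ itself are adjusted by hypothesis, \emph{all} the intervals $\Lambda_j$ have adjusted endpoints, i.e.\ $k=0$ in \cref{prop:multiscale-Jensen}. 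That proposition then gives
\begin{equation*}
	\sum_{j=1}^m J_\epsilon(\log|f_{\Lambda_j}^a|,x_0,r)
	\ge J_\epsilon(\log|f_\Lambda^a|,x_0,r)-O(1)\epsilon^{-4}r^{-2}\exp(-l^{1-})-m\,O(r^2)r_1^{-2}.
\end{equation*}
The error terms must be controlled: choosing $r$ comparable to but slightly smaller than $r_1^{1+}$ with $r_1=r_0^{1+}$, and using $r_0\gtrsim\exp(-l^{1/2-})$, the term $\epsilon^{-4}r^{-2}\exp(-l^{1-})$ is $o(1)$ (since $r^{-2}\le\exp(l^{1-})$ while $\exp(-l^{1-})$ wins the product), and $m\,O(r^2)r_1^{-2}=m\,O(r_1^{2\epsilon'})$ can be kept below, say, $1/2$ by taking $r_0$ small enough in terms of $l,m$. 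Hence $\sum_j J_\epsilon(\log|f_{\Lambda_j}^a|,x_0,r)\ge 2d_0+1-1=2d_0+\tfrac12$, so by the pigeonhole principle and \cref{lem:Jensen-avg-analytic} at least one $\Lambda_{j_0}$ has $J_\epsilon(\log|f_{\Lambda_{j_0}}^a|,x_0,(1+\epsilon)r)$, and therefore $\nu_{f_{\Lambda_{j_0}}^a}(x_0,(1+\epsilon)r)$ after a harmless adjustment of the radii, still at least $1$ — in fact, more carefully, the subinterval inheriting the bulk of the mass still carries $\ge 2d_0+1$ zeroes if we are slightly cleverer about the partition (e.g.\ make one $\Lambda_{j_0}$ short and the rest long, or simply observe that a rational function of degree $\le 2d_0|\Lambda_{j_0}|$ restricted to a tiny disk can only carry $\le 2d_0$ zeroes when $|\Lambda_{j_0}|$ is chosen so that... ).

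The clean way to finish — and the key quantitative input — is this: $f_{\Lambda_{j}}^a(\cdot,\omega,E)$ is a rational function of degree at most $2d_0|\Lambda_j|$ in the variable $e(z)=e^{2\pi i z}$, hence has at most $2d_0|\Lambda_j|$ zeroes in all of $\C$. If I run the above one-step reduction iteratively, starting from $\Lambda$ with $\ge 2d_0+1$ zeroes and at each step passing to a subinterval carrying (essentially) at least as many zeroes but whose length has shrunk by a factor $\asymp l$, after $t$ steps I reach an interval $\Lambda^{(t)}$ of length $\asymp|\Lambda|/l^t$ still forced to have $\ge 2d_0+1$ zeroes in a disk of radius $r^{(t)}$; but once $|\Lambda^{(t)}|$ has dropped to a scale where $f_{\Lambda^{(t)}}^a$ simply cannot have that many zeroes (because the length is too small relative to the multiscale-counting budget, or because one hits the base case where $|\Lambda^{(t)}|\asymp l$ and \cref{prop:zero-count-log} together with the adjustedness forces the count to be $\le(\log l)^{C}$, contradiction with $2d_0+1$ only if... ) we get a contradiction. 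The main obstacle, and the part requiring genuine care rather than bookkeeping, is calibrating $C_0$ in $r_0'=r_0\exp(-(\log l)^{C_0})$ so that the cumulative loss in radius over the $t\asymp\log|\Lambda|/\log l$ iterations — each step replaces $r$ by roughly $r^{1+}$ through the constraints $r\le r_1^{1+}/2$, $r_1=r_0^{1+}$ — does not outrun the allowed $\exp(-(\log l)^{C_0})$ factor, while simultaneously keeping every $r_0^{(t)}$ above the threshold $\exp(-l^{1/2-})$ demanded by \cref{prop:multiscale-Jensen} and \cref{prop:Jensen-estimate-adjusted}; this is exactly the balancing done in \cite[Prop.\ 9.3]{GS-11-resonances}, and here it must additionally interact correctly with the degree bound $2d_0$ replacing the $(\log N)^{C}$ bound of the Schrödinger case.
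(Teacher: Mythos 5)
There is a genuine gap, and it is structural: your reduction runs in the wrong direction. Partitioning $\Lambda$ into subintervals and applying \cref{prop:multiscale-Jensen} only transfers the \emph{total} Jensen mass to the pieces, so the pigeonhole step guarantees merely that some $\Lambda_{j_0}$ carries mass at least $(2d_0+1)/m$, i.e.\ at least one zero; it cannot propagate the count $2d_0+1$ down the iteration, and your text leaves exactly the two places where this would have to be repaired as ellipses. The terminal comparison does not rescue the argument either: at the base scale the only available bound is \cref{prop:zero-count-log}, which allows $(\log l)^{C}$ zeroes in the disk --- far more than $2d_0+1$, so no contradiction arises --- and the degree bound $\deg f^a_{\Lambda_j}\le 2d_0|\Lambda_j|$ is a bound on the zeroes in all of $\C$, which never conflicts with a handful of zeroes sitting in a tiny disk. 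A downward multiscale scheme of this kind can at best reproduce the logarithmic bound of \cref{prop:zero-count-log}; it cannot produce a constant bound.

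The paper's proof goes \emph{upward} in scale, and the mechanism you are missing is the combination of the shift dynamics with the degree bound at the large scale. One sets $N\simeq\exp(l^{1-})$ and considers the shifted copies $\Lambda_m=m+\Lambda$ inside $[0,N-1]$; each carries the same $Z=\nu_{f^a_\Lambda}(x_0,\rho_0)$ zeroes, translated by $m\omega$. First \cref{prop:zero-count-log} is used to pick radii $\rho_0=r_0^{2+}\exp(-n\log l)$, $\rho_1=\rho_0/l$ with a zero-free annulus in between, so that whenever $x_m=x_0+m\omega\in\D(x_0,(1-2\epsilon)\rho_0)$ the $Z$ zeroes of $f^a_{\Lambda_m}$ actually lie in $\D(x_0,(1-\epsilon)\rho_0)$. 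By equidistribution (\cref{lem:discrepancy}) a proportion $\approx 2\rho_0$ of the shifts $m\in[0,N-1]$ has this property, and for such $m$ the adjustedness of the endpoints of $\Lambda$ transfers to $\Lambda_m$; completing $\{\Lambda_m\}_{m\in S}$ to a partition of $[0,N-1]$ via \cref{lem:adjust} (only $0$ and $N-1$ unadjusted) and applying \cref{prop:multiscale-Jensen} at the fixed center $x_0$, together with \cref{lem:Jensen-avg-analytic}, gives $\nu_{f_N^a}(x_0,(1+\epsilon)\rho_0)\gtrsim 2\rho_0 Z N$ up to admissible errors. Repeating this at the centers $x_m$ and summing over $\asymp(2\rho_0)^{-1}$ pairwise disjoint disks forces $f_N^a$ to have roughly $ZN$ zeroes, which contradicts the global degree bound $2d_0N$ for $f_N^a$ (this is where the trigonometric-polynomial hypothesis is actually used) unless $Z\le 2d_0$. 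Your proposal never invokes the shifted copies, the equidistribution of $x_0+m\omega$, or the degree bound at the large scale $N$; without these ingredients the constant bound $2d_0$ is not reachable.
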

\begin{proof}
	Let $ N\simeq\exp(l^{1-}) $. The idea of the proof is that \cref{prop:multiscale-Jensen} implies that if
	$ f_\Lambda^a $ has too many zeroes then $ f_N^a $ has too many zeroes.
	
	\cref{prop:zero-count-log} guarantees that there exists $ n\in [1,(\log l)^C] $ such that
	$ f_\Lambda^a $ has no zeroes in $ \D(x_0,\rho_0)\setminus \D(x_0,\rho_1) $,
	$ \rho_0=r_0^{2+}\exp(-n\log l)$,
	$ \rho_1=\rho_0/l $. Let  $ \Lambda_m=m+\Lambda $, $ x_m=x_0+m\omega $, and
	\begin{equation*}
		S= \{ m\in[0,N-1]: \Lambda_m\subset [l,N-l-1]\text{ and } x_m\in \D(x_0,(1-2\epsilon)\rho_0)  \},
	\end{equation*}
	with $ \epsilon=\epsilon(d_0)\ll 1 $ to be chosen later. Note that \cref{lem:discrepancy} gives us that
	\begin{equation}\label{eq:S-count}
		|S|=2N(1-2\epsilon)\rho_0+O(1)C(\log N)^{\alpha+2}
	\end{equation}
	and due to the Diophantine condition we have that if $ m_1,m_2\in S $, $ m_1\neq m_2 $ then
	\begin{equation*}
		\dist(\Lambda_1,\Lambda_2)\gg l.
	\end{equation*}
	If $ x_m\in \D(x_0,(1-2\epsilon)\rho_0) $ then $ \D(x_0,r_0/2)\subset \D(x_m,r_0) $, because
	$ \rho_0\ll r_0 $. Since we obviously have that the endpoints of $ \Lambda_m $ are adjusted to
	$ (\D(x_m,r_0),\omega,E) $ at scale $ l $ it follows that they are also adjusted to
	$ (\D(x_0,r_0/2),\omega,E) $ at scale $ l $, provided $ m\in S $. It is now easy to see that we can
	find a partition of $ [0,N-1] $ containing the intervals $ \Lambda_m $, $ m\in S $, that satisfies the
	requirements of \cref{prop:multiscale-Jensen} and such that $ 0 $ and $ N-1 $ are the only unadjusted
	endpoints (we are using \cref{lem:adjust}; to make sure that we can apply the lemma, we can replace
	$ r_0 $ by $ r_0\exp(-(\log l)^C) $, as this won't affect the final result).  It then follows that
	\begin{equation}\label{eq:multiscale-ineq}
		\frac{1}{N}J_\epsilon(\log|f_N^a|,x_0,\rho_0)
		\ge \frac{1}{N} \sum_{m\in S} J_\epsilon(\log|f_{\Lambda_m}^a|,x_0,\rho_0)-
		C(\exp(l^{1-})+\rho_0^2(r_0^{1+})^{-2}).
	\end{equation}
	We used the fact that the Jensen averages of subharmonic functions are non-negative (due to the
	sub-mean-value property of subharmonic functions).
	Let $ Z=\nu_{f_\Lambda^a}(x_0,\rho_0) $. We obviously have that
	\begin{equation*}
		Z=\nu_{f_{\Lambda_m}^a}(x_m,\rho_0)=\nu_{f_{\Lambda_m}^a}(x_m,\rho_1),
	\end{equation*}
	for any $ m $. If $ m\in S $ then $ \D(x_m,\rho_1)\subset \D(x_0,(1-\epsilon)\rho_0) $ and therefore
	\begin{equation*}
		\nu_{f_{\Lambda_m}^a}(x_0,(1-\epsilon)\rho_0)\ge Z.
	\end{equation*}
	This, together with \cref{eq:S-count}, \cref{eq:multiscale-ineq}, and \cref{lem:Jensen-avg-analytic}
	imply that
	\begin{equation*}
		\frac{1}{N}\nu_{f_N^a}(x_0,(1+\epsilon)\rho_0)\ge 2(1-2\epsilon)\rho_0Z
		-C(\exp(l^{1-})+\rho_0^2(r_0^{1+})^{-2}).
	\end{equation*}
	
	We can repeat the above reasoning with $ \Lambda_m $ instead of $ \Lambda $, $ x_m $ instead of $ x_0 $,
	and the same $ r_0,\rho_0,\rho_1 $ to get
	\begin{equation*}
		\frac{1}{N}\nu_{f_N^a}(x_m,(1+\epsilon)\rho_0)\ge 2(1-2\epsilon)\rho_0Z
		-C(\exp(l^{1-})+\rho_0^2(r_0^{1+})^{-2}).
	\end{equation*}
	Since we can find at least $ [2\rho_0(1+2\epsilon)]^{-1} $ pairwise disjoint disks
	$ \D(x_m,(1+\epsilon)\rho_0) $ (we are using \cref{cor:equidistribution} and 
	$ (\log N)^{\alpha+2}/N\ll \rho_0 $)
	it follows that
	\begin{equation*}
		2d_0\ge \frac{1}{2\rho_0(1+2\epsilon)} \left(2(1-2\epsilon)\rho_0Z
		-C(\exp(l^{1-})+\rho_0^2(r_0^{1+})^{-2})\right).
	\end{equation*}
	For $ \epsilon=\epsilon(d_0) $ small enough and $ l $ large enough, the above inequality implies that
	$ 2d_0+1>Z $. So we can conclude that $ Z\le 2d_0 $.
\end{proof}

\begin{remark}\label{rem:d_0}
	For general $ a,b $ it follows from the Jensen formula (together with the large deviations estimate and
	the uniform upper bound) that the number of zeroes of $ f_N^a(\cdot,\omega,E) $ in a strip around $ \T $
	is bounded by $ C_0 N $, with $ C_0=C_0(a,b,\omega,E,\gamma) $. It is clear from the proof that in this
	case the previous lemma holds with $ d_0=C_0/2 $.
\end{remark}

\section{Proof of the Main Result}\label{sec:proof-main}

One can get information on the regularity of the integrated density of states from finite scale estimates via
the following standard result.

\begin{lemma}\label{lem:multiscale-IDS}
	For any $ N,m\ge 1 $, $ \omega\in \T $, and any interval $ I\subset \R $ we have
	\begin{equation*}
		\frac{1}{mN}\int_\T |\sigma(H_{mN}(x,\omega))\cap I|\,dx
		\le \frac{1}{N}\int_\T |\sigma(H_N(x,\omega))\cap I|\, dx+\frac{4}{N}.
	\end{equation*}
\end{lemma}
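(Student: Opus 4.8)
The plan is to compare the spectrum of $H_{mN}(x,\omega)$ with the direct sum of $m$ copies of the block operators $H_N(x+jN\omega,\omega)$, $j=0,\dots,m-1$. Write $H_{mN}(x,\omega)=\bigoplus_{j=0}^{m-1} H_N(x+jN\omega,\omega)+R$, where $R$ is the operator coupling adjacent blocks through the off-diagonal entries $-b(\cdot)$, $-\t b(\cdot)$ at the $m-1$ interfaces. The key point is that $R$ has rank at most $2(m-1)$: each interface contributes a rank-two perturbation (one entry above and one below the diagonal). By the eigenvalue interlacing / min-max principle for finite-rank perturbations of Hermitian matrices, for $x\in\T$ the number of eigenvalues of $H_{mN}(x,\omega)$ in any interval $I$ differs from the number of eigenvalues of $\bigoplus_j H_N(x+jN\omega,\omega)$ in $I$ by at most $\rank R\le 2(m-1)\le 2m$. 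Equivalently,
\begin{equation*}
	|\sigma(H_{mN}(x,\omega))\cap I| \le \sum_{j=0}^{m-1}|\sigma(H_N(x+jN\omega,\omega))\cap I| + 2m.
\end{equation*}

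Next I would integrate this pointwise inequality over $x\in\T$. The point of the averaging is that translation invariance of Lebesgue measure on $\T$ gives, for each $j$,
\begin{equation*}
	\int_\T |\sigma(H_N(x+jN\omega,\omega))\cap I|\,dx = \int_\T |\sigma(H_N(x,\omega))\cap I|\,dx,
\end{equation*}
since $x\mapsto x+jN\omega$ is a measure-preserving bijection of $\T$. Summing over the $m$ values of $j$ and adding the $2m$ term yields
\begin{equation*}
	\int_\T |\sigma(H_{mN}(x,\omega))\cap I|\,dx \le m\int_\T |\sigma(H_N(x,\omega))\cap I|\,dx + 2m.
\end{equation*}
Dividing through by $mN$ gives the claimed bound, in fact with $2/N$ in place of $4/N$; the slack constant $4/N$ leaves room for the mild bookkeeping above (e.g. counting each interface as a genuine rank-$2$ rather than worrying about boundary effects, or if one prefers the cruder bound $\rank R\le 4m$).

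The only genuinely non-routine step is pinning down the correct perturbation-theoretic inequality for eigenvalue counting functions under a finite-rank Hermitian perturbation, i.e. that $|N_A(I)-N_{A+R}(I)|\le\rank R$ for any interval $I$; this follows from Weyl-type monotonicity, writing $R=R_+-R_-$ with $R_\pm\ge 0$ of rank at most $\rank R$ and applying the eigenvalue interlacing inequalities $\lambda_{k+\rank R_+}(A)\le\lambda_k(A+R_+)\le\lambda_k(A)$ twice. Everything else — the block decomposition, the rank count of the coupling term, and the translation-invariance of the integral — is immediate from the explicit tridiagonal form \cref{eq:H_N} of $H_{mN}$. Note the estimate holds for all $\omega\in\T$ with no arithmetic or positivity hypothesis, which is exactly why it serves as the bridge from the finite-scale bound in \cref{thm:main}(1) to the regularity statement in \cref{thm:main}(2).
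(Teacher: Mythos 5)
Your proof is correct and follows essentially the same route as the paper: decompose $H_{mN}(x,\omega)$ into the direct sum of the shifted $N$-blocks $H_N(x+jN\omega,\omega)$ plus a finite-rank interface coupling $R$ with $\rank R\le 2m$, control the change of the eigenvalue count in $I$ by Weyl interlacing, and integrate over $x\in\T$ using translation invariance of the shifts. The only difference is cosmetic: the paper uses the cruder bound $4m$ (twice the rank) for the interval-count change, which is why it states $4/N$ where your sharper bookkeeping gives $2/N$.
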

\begin{proof}
	We have that
	\begin{equation*}
		H_{mN}(x)=\bigoplus_{k=0}^{m-1} H_N(x+kN\omega)+R,
	\end{equation*}
	with $ \rank R\le 2m $. It follows from Weyl's interlacing inequalities
	(see \cite[Thm. 4.3.6]{HJ-85-Matrix}) that
	\begin{equation*}
		|\sigma(H_{mN}(x))\cap I|
		\le \sum_{k=0}^{m-1} |\sigma(H_N(x+kN\omega))\cap I|+4m.
	\end{equation*}
	The conclusion follows immediately.
\end{proof}

Let $ \Lambda=[\alpha,\beta] $. The following estimate is well-known from the proof of the Wegner estimate
for the Anderson model:
\begin{multline*}
	|\sigma(H_\Lambda)\cap [E-\eta,E+\eta]|
	\le 2\eta \sum_{j=\alpha}^\beta \frac{\eta}{(E_j^{\Lambda}-E)^2+\eta^2}
	=2\eta\Im \Tr (H_\Lambda-E-i\eta)^{-1}\\
	\le 2\eta \sum_{k=\alpha}^\beta |\langle \delta_k,(H_\Lambda-E-i\eta)^{-1}\delta_k\rangle|.
\end{multline*}
We are left now with finding a bound on the diagonal entries of Green's function. For the Anderson model this
is straightforward using Schur's complement and the independence of the single-site potentials (assuming the
common distribution has bounded density). In the quasi-periodic setting such a simple approach fails due to the
correlations between the single-site potentials. Instead, we will use the fact that due to Cramer's formula we
have
\begin{equation*}
	|\langle \delta_k,(H_\Lambda(x,\omega)-E-i\eta)^{-1}\delta_k\rangle|
	=\frac{ \left| f_{[\alpha,k-1]}^a(x,\omega,E+i\eta) \right|
		\left| f_{[k+1,\beta]}^a(x,\omega,E+i\eta) \right|}
		{ \left| f_{[\alpha,\beta]}^a(x,\omega,E+i\eta) \right| }.
\end{equation*}
We can immediately write an estimate by using the uniform upper bound for the terms on top and the large
deviations theorem for the bottom. This estimate is not of the right order of magnitude, but it can be improved
by using the Avalanche Principle. The idea is simply that if we write the Avalanche Principle expansion for
the determinants, after cancellations, we would be left with a similar quantity but at a much smaller scale.
There are two issues with this approach. First, working with the determinants results in some extra terms that
won't cancel out (namely the  $ A_1, A_m $ terms in \cref{lem:AP-determinant}). Second, $ [\alpha,k-1] $ and
$ [k+1,\beta] $ don't partition $ [\alpha,\beta] $ so we'd be left with some extra terms that we don't want.
These issues are addressed by the following lemma. We will use the notation
\begin{equation*}
	\W_{N,k}(x,\omega,E)
	=\frac{\norm{M_{[0,k-1]}^a(x,\omega,E)}\norm{M_{[k,N-1]}^a(x,\omega,E)}}{\norm{M^a_{[0,N-1]}(x,\omega,E)}}.
\end{equation*}

\begin{lemma}\label{lem:pre-Wegner}
	Let $ (\omega,E)\in \T_{c,\alpha}\times \R $, $ x\in \T $, $ \eta>0 $, $ \K\subset[0,N-1] $, $ N\ge 1 $.
	Then we have
	\begin{equation*}
		|\sigma(H_N(x,\omega))\cap[E-\eta,E+\eta]|
		\le 4\eta \sum_{k\notin \K }\frac{1}{|\t b(x+k\omega)|}\W_{N,k}(x,\omega,E+i\eta)
		+2|\K|+10.
	\end{equation*}
\end{lemma}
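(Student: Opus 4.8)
The plan is to start from the Wegner-type bound already recalled before the lemma, namely
\[
	|\sigma(H_N(x,\omega))\cap[E-\eta,E+\eta]|\le 2\eta\sum_{k=0}^{N-1}|\langle\delta_k,(H_N(x,\omega)-E-i\eta)^{-1}\delta_k\rangle|,
\]
and to estimate each diagonal Green's function entry via Cramer's rule, which gives
\[
	|\langle\delta_k,(H_N-E-i\eta)^{-1}\delta_k\rangle|
	=\frac{|f_{[0,k-1]}^a(x,\omega,E+i\eta)|\,|f_{[k+1,N-1]}^a(x,\omega,E+i\eta)|}{|f_{[0,N-1]}^a(x,\omega,E+i\eta)|}.
\]
For $k\in\K$ I would not attempt a good bound: since the operator restricted to any window has at most $|\Lambda|$ eigenvalues in any interval and each contributes at most $1$ after the $2\eta\cdot\eta/((E_j-E)^2+\eta^2)\le 1$ estimate, one can simply discard those $k$ at the cost of an additive $2|\K|$ (the factor $2$ absorbing a convenient rounding); this is where the $2|\K|$ term comes from.

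The substance is in the terms $k\notin\K$, and the key step is to rewrite the Cramer quotient in terms of transfer matrices. Using \cref{eq:Ma-fa}, the entries $f^a_{[0,k-1]}$, $f^a_{[k+1,N-1]}$, $f^a_{[0,N-1]}$ are controlled by $\norm{M_{[0,k-1]}^a}$, $\norm{M_{[k,N-1]}^a}$ (after shifting the index by one, which is where $\t b(x+k\omega)$ enters — the top-left entry of $M^a_{[k,N-1]}$ is $b(x+(k+1)\omega)f^a_{[k+1,N-1]}$ up to the normalization, but more directly one reads off $f^a_{[k+1,N-1]}$ from $M^a_{[k,N-1]}$ divided out by a $\t b$ factor), and $\norm{M^a_{[0,N-1]}}$. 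I would express
\[
	|f^a_{[0,k-1]}(x)|\le\norm{M^a_{[0,k-1]}(x)},\qquad
	|f^a_{[k+1,N-1]}(x)|\le\frac{1}{|\t b(x+k\omega)|}\norm{M^a_{[k,N-1]}(x)},
\]
while for the denominator I want a \emph{lower} bound $|f^a_{[0,N-1]}(x)|\gtrsim\norm{M^a_{[0,N-1]}(x)}$. This last point is exactly where the splitting $[0,N-1]=[0,k-1]\cup\{k\}\cup[k+1,N-1]$ is awkward: $M^a_{[0,N-1]}\neq M^a_{[k,N-1]}M^a_{[0,k-1]}$ because the middle site $k$ is omitted. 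I would handle this by inserting the single-site factor at position $k$ explicitly: there is an exact identity expressing $M^a_{[0,N-1]}$ as a product involving $M^a_{[0,k-1]}$, the $1\times1$ ``transfer'' through site $k$, and $M^a_{[k+1,N-1]}$, and conversely $f^a_{[0,N-1]}=a(x+k\omega)$-type expansion along row $k$ giving $f^a_{[0,N-1]}=(a(x+k\omega)-E-i\eta)f^a_{[0,k-1]}f^a_{[k+1,N-1]}-(\ldots)$; rather than track this, the cleaner route is to note that $\Im(E+i\eta)=\eta>0$ forces $|f^a_{[0,N-1]}(x,\omega,E+i\eta)|\neq 0$ and to bound the whole diagonal sum directly by $\Im\Tr(H_N-E-i\eta)^{-1}$ and then re-expand. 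So the main obstacle is bookkeeping: producing, for each $k\notin\K$, the inequality
\[
	|\langle\delta_k,(H_N-E-i\eta)^{-1}\delta_k\rangle|\lesssim\frac{1}{|\t b(x+k\omega)|}\,\W_{N,k}(x,\omega,E+i\eta)
\]
with an absolute implied constant, plus a harmless additive slack.

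Concretely I would proceed as follows. First, reduce to $\sum_{k=0}^{N-1}|\langle\delta_k,(H_N-E-i\eta)^{-1}\delta_k\rangle|$ as above. Second, split the sum over $k\in\K$ versus $k\notin\K$; bound the $\K$-part by $2|\K|$ using $2\eta\,\eta/((E_j-E)^2+\eta^2)\le 1$ and $\#\{\text{eigenvalues}\}$ contributions telescoping trivially (each $k$ contributes at most $1+o(1)$, rounded to give the clean constant, and the leftover constant terms collected into the final $+10$). Third, for $k\notin\K$, apply Cramer and then \cref{eq:Ma-fa}: bound numerator factors by the corresponding matrix norms as displayed, and for the denominator use that $M^a_{[0,N-1]}$ factors through site $k$ so that, after dividing, $|f^a_{[0,N-1]}|\ge c\,\norm{M^a_{[0,N-1]}}/(|\text{a bounded factor}|)$ — here one uses $\norm{M^a_{[0,N-1]}}\le\norm{M^a_{[k,N-1]}}\norm{M^a_{[0,k-1]}}\cdot(\text{single-site factor})$ in the \emph{other} direction, i.e. the submultiplicativity is used to show the denominator is not too small relative to $\W_{N,k}$; combining, the quotient is $\le C|\t b(x+k\omega)|^{-1}\W_{N,k}$. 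Fourth, reassemble: the $k\notin\K$ terms give $4\eta\sum_{k\notin\K}|\t b(x+k\omega)|^{-1}\W_{N,k}(x,\omega,E+i\eta)$ (the constant $4$ coming from $2\eta\cdot 2$), the $\K$ terms give $2|\K|$, and the accumulated absolute constants from the rounding and the single-site factors are absorbed into $+10$. The only genuinely delicate point is keeping the implied constant in the denominator estimate absolute (independent of $x,N,k,\eta$), which follows because the single-site transfer through position $k$ has determinant of modulus $|b(x+(k+1)\omega)\t b(x+(k+1)\omega)|$ and the needed inequality only uses that this factor cancels against the $b$'s hidden in the $M^a$ normalization — a direct computation from the definitions of $M^a_\Lambda$ and \cref{eq:Ma-fa}.
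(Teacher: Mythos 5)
There is a genuine gap, and it sits exactly at the point you flagged as ``awkward'' and then tried to wave away: the lower bound on the Cramer denominator. Your route needs $|f^a_{[0,N-1]}(x,\omega,E+i\eta)|\gtrsim\norm{M^a_{[0,N-1]}(x,\omega,E+i\eta)}$ with an absolute constant, but $f^a_{[0,N-1]}$ is only the top-left \emph{entry} of $M^a_{[0,N-1]}$ and can be far smaller than the norm (nothing in the hypotheses — arbitrary $x$, arbitrary $\eta>0$ — rules this out; $\eta>0$ only gives $f^a_{[0,N-1]}\neq0$, not comparability). Submultiplicativity $\norm{M^a_{[0,N-1]}}\le\norm{M^a_{[k,N-1]}}\cdot(\text{site }k)\cdot\norm{M^a_{[0,k-1]}}$ used ``in the other direction'' only shows that $\W_{N,k}$ is bounded \emph{below} by a constant; it gives no upper bound on the Green's function entry in terms of $\W_{N,k}$, because it says nothing about the size of the single entry $f^a_{[0,N-1]}$ relative to $\norm{M^a_{[0,N-1]}}$. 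Note also that the statement you are asked to prove has $\norm{M^a_{[0,N-1]}}$ (not $|f^a_{[0,N-1]}|$) in the denominator of $\W_{N,k}$, so your direct Cramer bound on $H_{[0,N-1]}$ proves, at best, a strictly weaker inequality.

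The paper's missing idea is a case analysis on which of the four entries of $M^a_{[0,N-1]}$ in \cref{eq:Ma-fa} is largest, so that $\norm{M^a_{[0,N-1]}}\le 2\,|(\text{that entry})|$. One then replaces $H_{[0,N-1]}$ by the restriction to the matching sub-interval ($[0,N-1]$, $[1,N-1]$, $[0,N-2]$, or $[1,N-2]$), e.g.\ $H_{[1,N-2]}$ when the dominant entry is $-\t b(x)b(x+N\omega)f^a_{[1,N-2]}(x)$; the eigenvalue count changes by at most $2\rank R+2\le 10$ via Weyl's interlacing inequalities, which is precisely where the $+10$ in the statement comes from (your proposal has no mechanism producing this additive term — ``absorbing constants'' cannot, since your slack is multiplicative). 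After Cramer for the restricted operator the denominator is, up to $b,\t b$ factors that cancel against those appearing in the numerator bounds from \cref{eq:Ma-fa}, the dominant entry itself, hence $\ge\norm{M^a_{[0,N-1]}}/2$, and the quotient is $\le 2|\t b(x+k\omega)|^{-1}\W_{N,k}$; combined with the trivial bound $|\langle\delta_k,(H-E-i\eta)^{-1}\delta_k\rangle|\le\eta^{-1}$ for $k\in\K$ (which is the clean way to get $2|\K|$, rather than your eigenvalue-counting sketch), this yields the stated estimate. Without the entry-selection and interlacing step your argument does not close.
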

\begin{proof}
	We assume that the entry of $ M_{[0,N-1]}^a(x) $ with the largest absolute value is
	\begin{equation*}
		-\t b(x)b(x+N\omega)f^a_{[1,N-2]}(x).
	\end{equation*}
	The case when the largest entry is one of the other entries can be treated analogously to this one. We
	singled out this case because it captures all the needed ideas.
	
	From our assumption we get that
	\begin{equation*}
		\norm{M_{[0,N-1]}^a(x)}\le 2 \left| \t b(x)b(x+N\omega)f^a_{[1,N-2]}(x) \right|.
	\end{equation*}
	To take advantage of this relation we need to work with $ H_{[1,N-2]} $ instead of $ H_{[0,N-1]} $.
	This is not a problem because we have
	\begin{equation*}
		H_{[0,N-1]}=H_{ \{ 0 \} } \oplus H_{[1,N-2]} \oplus H_{ \{ N-1 \} }+R,
	\end{equation*}
	with $ \rank R\le 4 $, and then Weyl's interlacing inequalities (see \cite[Thm. 4.3.6]{HJ-85-Matrix})
	imply
	\begin{multline*}
		|\sigma(H_N(x,\omega))\cap[E-\eta,E+\eta]|
		\le |\sigma(H_{[1,N-2]}(x,\omega))\cap[E-\eta,E+\eta]|+2\rank R+2\\
		\le |\sigma(H_{[1,N-2]}(x,\omega))\cap[E-\eta,E+\eta]|+10.
	\end{multline*}
	We know that
	\begin{equation*}
		|\sigma(H_{[1,N-2]}(x,\omega))\cap[E-\eta,E+\eta]|
		\le 2\eta \sum_{k=1}^{N-2} |\langle \delta_k,(H_{[1,N-2]}(x,\omega)-E-i\eta)^{-1}\delta_k\rangle|.
	\end{equation*}
	We have
	\begin{multline*}
		|\langle \delta_k,(H_{[1,N-2]}(x,\omega)-E-i\eta)^{-1}\delta_k\rangle|
		=\frac{ \left| f_{[1,k-1]}^a(x) \right| \left| f_{[k+1,N-2]}^a(x) \right|}
			{ \left| f^a_{[1,N-1]}(x) \right|}\\
		\le \frac{\norm{M_{[0,k-1]}^a(x)}}{|\t b(x)|}
			\frac{\norm{M_{[k,N-1]}^a(x)}}{|\t b(x+k\omega)||b(x+N\omega)|}
			\frac{2|\t b(x)||b(x+N\omega)|}{\norm{M_{[0,N-1]}^a(x)}}
		= \frac{2}{|\t b(x+k\omega)|}\W_{N,k}(x).
	\end{multline*}
	At the same time we have
	\begin{equation*}
		|\langle \delta_k,(H_{[1,N-2]}(x,\omega)-E-i\eta)^{-1}\delta_k\rangle|
		\le \norm{(H_{[1,N-2]}(x,\omega)-E-i\eta)^{-1}}\le \frac{1}{\eta},
	\end{equation*}
	so we get
	\begin{equation*}
		|\sigma(H_{[1,N-2]}(x,\omega))\cap[E-\eta,E+\eta]|
		\le 4\eta \sum_{k\notin \K}\frac{1}{|\t b(x+k\omega)|}\W_{N,k}(x)+2|\K|,
	\end{equation*}	
	and the conclusion follows immediately.
\end{proof}

We will now see how to estimate $ \W_{N,k} $ by using the Avalanche Principle. Given an interval
$ \Lambda=[\alpha,\beta] $ such that $ 0\in \Lambda $ we will use the notation
\begin{equation*}
	\W_{\Lambda}(x,\omega,E)
	=\frac{\norm{M_{[\alpha,0]}^a(x,\omega,E)} \norm{M_{[1,\beta]}^a(x,\omega,E)}}
		{\norm{M_{[\alpha,\beta]}^a(x,\omega,E)}}.
\end{equation*}

\begin{lemma}\label{lem:multiscale-concatenation-estimate}
	Let $ (\omega,E)\in \T_{c,\alpha}\times \C $ such that $ L(\omega,E)>\gamma>0 $  and . There exists a
	constant $ N_0=N_0(a,b,\omega,E,\gamma) $ such that if $ N\ge N_0 $ and $ \Lambda $ is
	an interval such that $ \Lambda\supset[-|\Lambda|/4,|\Lambda|/4] $,
	$ (\log N)^{1+}\le |\Lambda|\ll N $, then
	\begin{equation*}
		\log|\W_{N,k}(x,\omega,E)|=\log|\W_{\Lambda}(x+(k-1)\omega,\omega,E)|+O(1)\exp(-|\Lambda|^{1-}),
	\end{equation*}
	for $ k\in[2|\Lambda|,N-2|\Lambda|] $ and $ x\in \T\setminus \B_{N,\Lambda}(\omega,E) $,
	with $ |\B_{N,\Lambda}|\le \exp(-|\Lambda|^{1-}) $.
\end{lemma}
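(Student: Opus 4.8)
The plan is to express both $\W_{N,k}$ and $\W_{\Lambda}$ via the Avalanche Principle applied to a common partition, so that after cancellation the two quantities differ only by a block confined to a window of size $\sim|\Lambda|$. Concretely, write $[0,N-1]$ as a disjoint union of intervals $\Lambda_1,\ldots,\Lambda_m$, each of length between $l$ and $l^A$ for a suitable auxiliary scale $l$ (say $l=|\Lambda|$, $A=2$), chosen so that both the splitting point $k$ (more precisely, the point between $k-1$ and $k$) and the splitting point $0$ (between $-1$ and $0$) fall at endpoints of the partition, and so that the interval $\Lambda$ itself (shifted by $(k-1)\omega$, i.e.\ the relevant window around $0$) is a union of consecutive blocks $\Lambda_{j_0},\ldots,\Lambda_{j_1}$. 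This is possible precisely because $\Lambda\supset[-|\Lambda|/4,|\Lambda|/4]$ contains $0$ comfortably in its interior and $k\in[2|\Lambda|,N-2|\Lambda|]$ keeps both cut points away from the ends of $[0,N-1]$.

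Next I would invoke \cref{lem:AP-determinant}: on the good set $\T\setminus\B$, where the large deviations estimate of \cref{prop:ldt-determinant} holds simultaneously for all the $f^a_{\Lambda_j}$ and $f^a_{\Lambda_j\cup\Lambda_{j+1}}$ (the exceptional set has measure $\le m\exp(-l^{1-})\le\exp(-|\Lambda|^{1-})$ since $|\Lambda|\ge(\log N)^{1+}$ controls the number of blocks $m\lesssim N/l$), we get the telescoping identity
\begin{equation*}
	\log\norm{M^a_{[0,N-1]}}=\sum_{j=1}^{m-1}\log\norm{M^a_{\Lambda_{j+1}}M^a_{\Lambda_j}}
		-\sum_{j=2}^{m-1}\log\norm{M^a_{\Lambda_j}}+O(1)\exp(-\gamma l/2).
\end{equation*}
Apply the same expansion to $\norm{M^a_{[0,k-1]}}$ (partition $\Lambda_1,\ldots,\Lambda_{j_k-1}$), to $\norm{M^a_{[k,N-1]}}$ (partition $\Lambda_{j_k},\ldots,\Lambda_m$), and analogously to the three norms appearing in $\W_{\Lambda}$. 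When these are combined in the defining ratios for $\log|\W_{N,k}|$ and $\log|\W_{\Lambda}|$, every pair-norm and single-norm term associated to a block lying entirely to the left or entirely to the right of the $\Lambda$-window cancels in the difference, because those blocks contribute identically to numerator and denominator. What survives is a bounded number (controlled by $A$ and the fixed overlap structure near the cut points $-1/0$ and $k-1/k$) of terms $\log\norm{M^a_{\Lambda'}}$ with $\Lambda'$ of length $l$ or $2l$ sitting inside or adjacent to the window; and by construction these are exactly the terms that reassemble, via the same Avalanche Principle identity run backwards, into $\log|\W_{\Lambda}(x+(k-1)\omega,\omega,E)|$. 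The errors accumulated are $O(1)\exp(-\gamma l/2)$ per application, and there are only finitely many applications, so they sum to $O(1)\exp(-|\Lambda|^{1-})$.

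The main obstacle is bookkeeping the cancellation cleanly: one must choose the partition so that the \emph{same} blocks are used for the expansion of $M^a_{[0,N-1]}$ and for the two half-products $M^a_{[0,k-1]}$, $M^a_{[k,N-1]}$ (this forces $k$ to be a partition endpoint, which one arranges by absorbing the at most $l^A$ slack into the sizes of the blocks adjacent to $k$), and then to verify that the leftover terms near $0$ and near $k-1$ are precisely those generated by applying \cref{lem:AP-determinant} to the three short products defining $\W_\Lambda$. A small subtlety is that $\W_{N,k}$ involves the split at $k$ while $\W_\Lambda$ involves the split at $0$; after translating by $(k-1)\omega$ these coincide, and the translation is harmless since \cref{prop:ldt-determinant} and \cref{prop:uniform-upper-bound} are uniform in the base point over $\T$. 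One should also check that $|\Lambda|\ge(\log N)^{1+}$ indeed gives $l\ge 2\log m/\gamma$ as required in \cref{lem:AP-determinant}, which it does since $m\lesssim N$. Modulo these combinatorial checks — which are of the same nature as those in \cite[Sec. 9]{GS-11-resonances} — the estimate follows.
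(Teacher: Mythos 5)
Your argument is essentially the paper's proof: partition $[0,N-1]$ into blocks of size proportional to $|\Lambda|$ so that $(k-1)+\Lambda$ is a union of blocks split at the cut between $k-1$ and $k$, apply \cref{lem:AP-determinant} together with \cref{prop:ldt-determinant} to the three factors of $\W_{N,k}$ (the induced partitions of $[0,k-1]$ and $[k,N-1]$), and note that after cancellation only the terms near the cut survive, which reassemble exactly into $\log|\W_{\Lambda}(x+(k-1)\omega,\omega,E)|$, with the exceptional set and the accumulated AP errors controlled by $m\lesssim N/|\Lambda|$ and $|\Lambda|\ge(\log N)^{1+}$. The only slip is the sample choice $l=|\Lambda|$: since the shorter half of the window can be as small as $|\Lambda|/4$, the auxiliary scale must be taken $\lesssim |\Lambda|/4$ (the paper uses blocks of size between $|\Lambda|/4$ and $4|\Lambda|$), which your hedged ``suitable auxiliary scale'' accommodates.
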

\begin{proof}
	Fix $ k\in[2|\Lambda|,N-2|\Lambda|] $. We can partition $ [0,N-1] $ into intervals of size
	proportional to $ |\Lambda| $ (between, say, $ 1/4|\Lambda| $ and $ 4|\Lambda| $) one of which is
	$ (k-1)+\Lambda $. Partitioning $ (k-1)+\Lambda $ as
	\begin{equation*}
		[\alpha+(k-1),k-1]\cup [k,\beta+(k-1)],
	\end{equation*}
	we also induce partitions on $ [0,k-1] $ and $ [k,N-1] $. The conclusion follows by applying the
	Avalanche Principle expansion (i.e. using \cref{lem:AP-determinant} and \cref{prop:ldt-determinant})
	to all three factors in the expression of $ \W_{N,k}(x,\omega,E) $.
\end{proof}

We note that for $ x\in \T\setminus \B_{N,\Lambda} $, with $ \B_{N,\Lambda} $ as in the previous lemma, we
have
\begin{equation*}
	\log\norm{M_{(k-1)+\Lambda}^a(x)}\ge \log|f_{(k-1)+\Lambda}^a(x)|\ge |\Lambda|L^a-|\Lambda|^{1-}.
\end{equation*}
This, together with the uniform upper bound from \cref{prop:uniform-upper-bound}, imply that
\begin{equation*}
	\left| \W_{\Lambda}(x+(k-1)\omega) \right|\le \exp(|\Lambda|^{1-}).
\end{equation*}
Such an estimate is not good enough. It will be clear that we  need  $ (\log|\Lambda|)^C $ instead of
$ |\Lambda|^{1-} $. While it is certainly possible to apply the large deviations estimate with a deviation
of size $ (\log|\Lambda|)^C $, the resulting exceptional set would be too large for the Avalanche Principle
and also for bounding the integral of $ |\sigma(H_N)\cap [E-\eta,E+\eta]| $ over it. This difficulty will be
overcome by using \cref{lem:ldt-Cartan}.

We will also use the following standard estimate.
\begin{lemma}\label{lem:p-sum-of-shifts}
	Let $ \omega\in \T_{c,\alpha} $ and $ p>1 $. There exists  a constant $ C_0(\omega,p) $ such that
	for any $ N>1 $ and $ \rho \gg 1/N $ we have
	\begin{equation*}
		\sum_{k\in S} \norm{k\omega}^{-p}\le C_0 N(\log N)^\alpha \rho^{1-p},
	\end{equation*}
	where
	\begin{equation*}
		S= \{ k\in[0,N-1]: \norm{k\omega} \ge \rho \}.
	\end{equation*}
\end{lemma}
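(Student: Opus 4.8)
The plan is to control the sum $\sum_{k\in S}\norm{k\omega}^{-p}$ by grouping the indices $k$ according to the dyadic scale of $\norm{k\omega}$. Specifically, I would partition $S$ into the level sets
\[
S_j=\left\{ k\in[0,N-1]:\ 2^{j}\rho\le \norm{k\omega}<2^{j+1}\rho \right\},\qquad j=0,1,2,\ldots,
\]
which is a finite partition since $\norm{k\omega}\le 1/2$ always, so only $j\lesssim \log(1/\rho)$ contribute. On $S_j$ we trivially have $\norm{k\omega}^{-p}\le (2^j\rho)^{-p}$, so it remains to bound the cardinality $|S_j|$.

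The key step is the estimate $|S_j|\lesssim N\cdot 2^{j}\rho\cdot C_0(\log N)^\alpha$ (up to a harmless additive $(\log N)^{\alpha+2}$-type error). Here I would use \cref{lem:discrepancy}: the condition $\norm{k\omega}<2^{j+1}\rho$ means $k\omega$ lies in the union of two intervals around $0$ of total length $\lesssim 2^{j+1}\rho$, so by the discrepancy bound the number of such $k\in[0,N-1]$ is at most $N\cdot 2^{j+2}\rho+O(1)C_0(\log N)^{\alpha+2}$. Since we are assuming $\rho\gg 1/N$, and since the indices we are summing over have $\norm{k\omega}\ge\rho\gg 1/N$ so that by \cref{cor:equidistribution} the relevant spacing is at least $cN^{-1}(\log N)^{-\alpha}$, the main term $N 2^j\rho$ dominates the error term for the scales that matter; the factor $(\log N)^\alpha$ in the conclusion is exactly what absorbs this comparison. (One can also argue directly: two points $k_1\omega,k_2\omega$ with $k_1\neq k_2$, $k_i\le N$ satisfy $\norm{(k_1-k_2)\omega}\ge c N^{-1}(\log N)^{-\alpha}$ by the Diophantine condition, so $|S_j|\lesssim 2^{j+1}\rho\cdot N(\log N)^\alpha/c$ directly, which is the cleaner route and avoids worrying about the additive error.)

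Combining, I get
\[
\sum_{k\in S}\norm{k\omega}^{-p}=\sum_{j\ge 0}\sum_{k\in S_j}\norm{k\omega}^{-p}
\lesssim \sum_{j\ge 0}(2^j\rho)^{-p}\cdot C_0 N(\log N)^\alpha 2^{j}\rho
= C_0 N(\log N)^\alpha \rho^{1-p}\sum_{j\ge 0}2^{-j(p-1)},
\]
and the geometric series converges since $p>1$, its sum being $(1-2^{-(p-1)})^{-1}$, which I fold into the constant $C_0(\omega,p)$. This yields the claimed bound $\sum_{k\in S}\norm{k\omega}^{-p}\le C_0 N(\log N)^\alpha\rho^{1-p}$.

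The only mildly delicate point — the main obstacle, such as it is — is making sure the additive discrepancy error $(\log N)^{\alpha+2}$ does not spoil the count on the finest scale $j=0$, where $2^j\rho=\rho$ could in principle be only slightly larger than $1/N$; this is precisely why the hypothesis $\rho\gg 1/N$ is imposed, and why using the direct spacing argument via \cref{cor:equidistribution} (spacing $\ge cN^{-1}(\log N)^{-\alpha}$, hence at most $\lesssim 2^j\rho N(\log N)^\alpha$ points in an interval of length $2^j\rho$) is the safest way to handle it. Everything else is routine dyadic summation.
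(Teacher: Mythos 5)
Your proof is correct, and its final form (the dyadic route with the direct separation count) is watertight. It differs from the paper's argument mainly in bookkeeping: the paper never decomposes $S$ into dyadic level sets. Instead it orders the orbit points $x_1\le\dots\le x_n$ of $\{k\omega \bmod 1:\ k\in S\}$, uses the Diophantine condition to get the spacing bound $x_{i+1}-x_i\ge cN^{-1}(\log N)^{-\alpha}$, and then bounds the sum by a Riemann-sum comparison,
\[
\sum_i x_i^{-p}\le \sum_i \frac{1}{x_i-x_{i-1}}\int_{x_{i-1}}^{x_i}t^{-p}\,dt
\le C N(\log N)^{\alpha}\int_{\rho/2}^{1}t^{-p}\,dt\lesssim N(\log N)^{\alpha}\rho^{1-p},
\]
so the convergence for $p>1$ is absorbed into the integral rather than into your geometric series $\sum_j 2^{-j(p-1)}$. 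Both proofs rest on exactly the same key input, the $cN^{-1}(\log N)^{-\alpha}$ separation of the orbit points (your \cref{cor:equidistribution}); the paper's integral comparison is slightly leaner, while your dyadic version is equally valid and perhaps more transparent about where $p>1$ enters. One caution you already identified correctly: if you count $|S_j|$ via \cref{lem:discrepancy} alone, the additive error $O\bigl((\log N)^{\alpha+2}\bigr)$ at the finest scale contributes $\sim(\log N)^{\alpha+2}\rho^{-p}$, which is only dominated by $N(\log N)^{\alpha}\rho^{1-p}$ when $N\rho\gtrsim(\log N)^{2}$, and the hypothesis $\rho\gg 1/N$ need not give that; so the separation-based count (additive error $O(1)$ per scale, absorbed since $N\rho(\log N)^{\alpha}\gg1$) is indeed the version to keep, and with it your proof is complete.
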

\begin{proof}
	Let $ x_1\le \ldots \le x_n $ be the elements of the set $ \{ k\omega (\mod 1): k\in S \} $ and
	$ x_0=x_1-1/N $. Note that we have $ x_0\ge \rho/2 $. Also, due to the Diophantine restriction on
	$ \omega $ we have $ x_{i+1}-x_{i}\ge C N^{-1}(\log N)^{-\alpha} $. We can now conclude that
	\begin{multline*}
		\sum_{k\in S} \norm{k\omega}^{-p}=\sum_{i=1}^n x_i^{-p}
		\le \sum_{i=1}^n \frac{1}{x_i-x_{i-1}}\int_{x_{i-1}}^{x_i} t^{-p}\,dt\\
		\le C N(\log N)^\alpha \int_{\rho/2}^1 t^{-p}\,dt\le C' N(\log N)^\alpha \rho^{1-p}.
	\end{multline*}
\end{proof}

\begin{proof}
	(of \cref{thm:main}) We just have to prove the first part of the theorem. The second part follows from
	the first and \cref{lem:multiscale-IDS}.
	
	Let $ l=(\log N)^2 $, $ r_0=\exp(-(\log l)^2) $, and $ r_1=r_0\exp(-(\log l)^{C_0}) $, with $ C_0 $ as in
	\cref{prop:zero-count-const}, with the given $ r_0 $ and $ A=10 $. Let $ \{ x_j \}
	$ be a minimal set of points such that the disks $ \D(x_j,r_1/2) $ cover $ \T $. By \cref{lem:adjust} we
	can find intervals $ \Lambda_j=[\alpha_j,\beta_j] $, $ \alpha_j\simeq -l^7 $, $ \beta_j\simeq l^7 $, such
	that $ \alpha_j,\beta_j $ are adjusted to $ (\D(x_j,r_0),\omega,E) $ at scale $ l $. It follows from
	\cref{prop:zero-count-const} that $ f_{\Lambda_j}^a(\cdot,\omega,E) $ has at most $ 2d_0 $ zeroes in
	$ \D(x_j,r_1) $. Furthermore, for $ N $ large enough, $ \t b $ has at most $ n_b $ zeroes in
	$ \D(x_j+\omega,r_1) $. Therefore, from \cref{lem:ldt-Cartan}, \cref{prop:uniform-upper-bound}, and
	\cref{lem:b-Cartan} we get
	\begin{multline*}
		\left |\frac{1}{\tilde b(x+\omega)}\W_{\Lambda_j}(x)\right|
		\le \exp((\log l)^C) |x-\zeta_j|^{-2d_0}|x+\omega-\zeta_j'|^{-n_b}\\
		\le \exp((\log l)^C) \max\left(|x-\zeta_j|^{-(2d_0+n_b)},|x+\omega-\zeta_j'|^{-(2d_0+n_b)}\right)
	\end{multline*}
	for all $ x\in \D(x_j,r_1/2) $.

	Let $ \B=\cup \B_{N,\Lambda_j} $, with $ \B_{N,\Lambda_j} $ as in
	\cref{lem:multiscale-concatenation-estimate} and $ \K $ be the set of integers $ k $ that are not in
	$ [l^8,N-l^8] $ (i.e., to which we cannot apply \cref{lem:multiscale-concatenation-estimate}), such
	that $ x+(k-1)\omega $ is at distance less than $ \rho_0 $ from the zeroes of $ f_{\Lambda_j}^a $ in
	$ \D(x_j,r_1) $, or such that $ x+k\omega $ is at distance at least $ \rho_0 $ from the zeroes of
	$ \t b $, with $ \rho_0\gg 1/N $ to be chosen later. We have that $ |\B|\le \exp(-(\log N)^{14-}) $ and
	\begin{equation*}
		|\K|\lesssim d_0N \exp((\log l)^C)\rho_0+n_bN\rho_0+(\log N)^C.
	\end{equation*}

	Applying \cref{lem:pre-Wegner} and \cref{lem:multiscale-concatenation-estimate} we get that for
	$ x\in\T\setminus \B $ be have
	\begin{multline}\label{eq:eta-rho-bound}
		|\sigma(H_N(x,\omega))\cap[E-\eta,E+\eta]|
		\lesssim \eta \sum_{k\notin \K }\frac{1}{|\t b(x+k\omega)|}\W_{N,k}(x,\omega,E+i\eta)
		+|\K|\\
		\lesssim N\eta\exp((\log l)^C)\rho_0^{1-(2d_0+n_b)}+|\K|.
	\end{multline}
	We obtained the $ \rho_0^{1-(2d_0+n_b)} $ factor instead of a $ \rho_0^{-(2d_0+n_b)} $ factor by using
	\cref{lem:p-sum-of-shifts} (this is the reason for needing $ \rho_0\gg 1/N $).
	At this point we are essentially looking for a choice of $ \rho_0 $ such that
	\begin{equation*}
		\eta \rho_0^{1-(2d_0+n_b)}+\rho_0\lesssim \eta^p,
	\end{equation*}
	with $ p $ as large as possible. An elementary analysis yields that the largest possible H\"older exponent is
	$ p=1/(2d_0+n_b) $ and it is attained when $ \rho_0=\eta^p $. Now we get that for any
	$ (1/N)^{1/p}\ll \eta\le 1/N  $ (in fact, for the upper bound all we need is that
	$ \eta^{0+}\exp((\log l)^C)\le 1 $) we have
	\begin{equation*}
		|\sigma(H_N(x,\omega))\cap[E-\eta,E+\eta]|\le N\eta^{p-},
	\end{equation*}
	for any $ x\in \T\setminus \B $. Note that for \cref{eq:eta-rho-bound} to hold we need to ensure that
	$ L(E+i\eta,\omega)\gtrsim \gamma $. This is true for $ N $ large enough, by continuity of the Lyapunov
	exponent (see \cite{JM-12-Analytic-quasi-}). Since for any $ x\in\T $ we have
	\begin{equation*}
		|\sigma(H_N(x,\omega))\cap[E-\eta,E+\eta]|\le N
	\end{equation*}
	and $ |\B|\le \exp(-(\log N)^{14-}) $ it follows that
	\begin{equation*}
		\int_\T |\sigma(H_N(x,\omega))\cap[E-\eta,E+\eta]|\,dx \lesssim  N \eta ^{p-}.
	\end{equation*}
	
	Finally, let us note that to obtain the first part by using \cref{lem:multiscale-IDS} one needs that
	$ \eta^{p-} \gtrsim N^{-1}  $, which is not a problem.
\end{proof}

\appendix

\section{Discussion of some Results from \cref{sec:preliminaries}}\label{sec:appendix}

First we discuss \cref{prop:ldt-determinant} and \cref{cor:ldt-determinant}. \cref{prop:ldt-determinant} for
the determinants is just \cite[Prop. 2.1]{BV-14-On-optimal-sepa} stated for general $ y $ instead of just
$ y=0 $. This is fine because the large deviations estimate depends only on the positivity of the Lyapunov
exponent. In particular, the fact that the operator is Hermitian for $ y=0 $ is not used. The statement for
the other entries follows from the estimate for $ f_N^a $. It is clear from \cref{eq:Ma-fa}
that one needs to control the deviations of $ b $ and $ \t b $. This is easily achieved by applying the
large deviations estimate for subharmonic functions \cite[Thm. 3.8]{GS-01-Holder}. To get
\cref{cor:ldt-determinant} we simply use the fact that
\begin{equation*}
	|NL^a(y,\omega,E)-NL^a(\omega,E)|\le C (N|y|+(\log N)^2).
\end{equation*}
This follows from the estimates
\begin{equation*}
	0\le L_N^a(y,\omega,E)-L^a(y,\omega,E)< C\frac{(\log N)^2}{N}
\end{equation*}
and
\begin{equation*}
	|L_N^a(y,\omega,E)-L_N^a(\omega,E)|\le C|y|
\end{equation*}
which were established in \cite[Lem. 3.9, Cor. 3.13]{BV-13-estimate}.

Next we prove \cref{lem:ldt-Cartan}. We will use the following formulation of Cartan's estimate (cf. \cite[Thm. 11.4]{Lev-96-Lectures} and \cite[Lem. 2.4]{GS-11-resonances}).

\begin{lemma}\label{lem:Cartan}
	Let $ \phi $ be an analytic function on $ \D(z_0,r_0) $, $ z_0\in\C $ and let $ m,M $ be such that
	\begin{equation*}
		\sup_{\D(z_0,r_0)} \log|\phi(z)|\le M,\quad m\le \log|\phi(z_0)|.
	\end{equation*}	
	Given $ H\gg 1 $, there exists
	\begin{equation*}
		\mc B=\mathop{\bigcup}_{j=1}^K \D(z_j,r_j),\ K\lesssim H(M-m),\ \sum_{j=1}^K r_j\le r_0\exp(-H),
	\end{equation*}
	such that
	\begin{equation*}
		\log|\phi(z)|-M\gtrsim H(M-m),
	\end{equation*}
	for $ z\in \D(z_0,r_0/6)\setminus \mc B $.
\end{lemma}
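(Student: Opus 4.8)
The plan is to reduce \cref{lem:Cartan} to the classical Cartan estimate (\cite[Thm.~11.4]{Lev-96-Lectures}) together with the Cartan--Boutroux covering lemma, so that the substance of the argument is the bookkeeping that expresses everything in terms of $M-m$. First I would normalize: since the hypothesis forces $\phi(z_0)\neq0$, replace $\phi(z)$ by $\phi(z_0+r_0z)/\phi(z_0)$, which reduces matters to the case $z_0=0$, $r_0=1$, $\phi(0)=1$, with $\sup_{\D(0,1)}\log|\phi|\le M'$ where $M':=M-m\ge0$. In this normalization the goal becomes: off a union $\mc B$ of at most $CM'$ disks with $\sum_j r_j\le e^{-H}$, one has $\log|\phi(z)|\ge -CHM'$ for $z\in\D(0,1/6)$. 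Undoing the normalization replaces $-CHM'$ by $\log|\phi(z_0)|-CH(M-m)\ge M-(C+1)H(M-m)$ and rescales the exceptional disks by $r_0$, which is the asserted estimate.

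The second ingredient is a zero count: by Jensen's formula on the circle $|z|=3/4$ and $\phi(0)=1$, the number $n$ of zeros $a_1,\dots,a_n$ of $\phi$ in $\D(0,1/2)$ obeys $n\log(3/2)\le\frac1{2\pi}\int_0^{2\pi}\log|\phi(\tfrac34 e^{i\theta})|\,d\theta\le M'$, so $n\lesssim M'$. I would then factor $\phi=Pg$ with $P(z)=\prod_{j=1}^n(z-a_j)$ and $g$ analytic and non-vanishing on $\D(0,1/2)$, and bound the two factors separately. For $P$ the Cartan--Boutroux lemma says $\{z:|P(z)|<\delta^n\}$ lies in at most $n$ disks with radii summing to $\lesssim\delta$; taking $\delta\asymp e^{-H}$ produces $\mc B$ with $\sum_j r_j\le e^{-H}$ and, off $\mc B$, $\log|P(z)|\ge n\log\delta\gtrsim -nH\gtrsim -HM'$.

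The delicate point is the non-vanishing factor $g$, where one must not use a pointwise supremum bound: zeros of $\phi$ accumulating near $|z|=1/2$ make $|g|$ large there, so I would instead exploit that $\log|g|$ is harmonic on $\D(0,1/3)$ and use its Poisson representation. For $|z|\le1/6$ the Poisson kernel is bounded by an absolute constant, so dropping the non-negative $\log^+$ part gives $\log|g(z)|\ge -C\,\frac1{2\pi}\int_0^{2\pi}\log^-|g(\tfrac13 e^{i\theta})|\,d\theta$. On $|z|=1/3$ one has $|P|\le1$, hence $\log^-|g|\le\log^-|\phi|$ there, and Jensen together with $\phi(0)=1$ yields $\frac1{2\pi}\int\log^-|\phi(\tfrac13 e^{i\theta})|\,d\theta=\frac1{2\pi}\int\log^+|\phi(\tfrac13 e^{i\theta})|\,d\theta-\frac1{2\pi}\int\log|\phi(\tfrac13 e^{i\theta})|\,d\theta\le M'$. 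Thus $\log|g(z)|\ge -CM'$ on $\D(0,1/6)$ with no exceptional set. Adding the bounds for $P$ and $g$ gives $\log|\phi(z)|\ge -C'HM'$ off $\mc B$ for $H\gg1$, and undoing the normalization completes the proof. The main obstacle is exactly this last factor: it is essential that $g$ be controlled through Jensen/Poisson averages rather than sup norms, and that the center be normalized so that those averages are of size $M-m$ and not $M$; everything else is routine once the Cartan--Boutroux covering is quoted.
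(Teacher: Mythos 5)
Your proposal is correct, but note that the paper does not actually prove \cref{lem:Cartan}: it is quoted as a known formulation of Cartan's estimate, with references to \cite[Thm. 11.4]{Lev-96-Lectures} and \cite[Lem. 2.4]{GS-11-resonances}, so there is no in-paper argument to compare against. What you have written is a correct self-contained derivation along exactly the classical lines underlying those citations: normalize by $\phi(z_0+r_0 z)/\phi(z_0)$ so that the only quantity left is $M-m$, count the zeros in the half-disk by Jensen, remove them via the Cartan--Boutroux covering lemma (which yields the exceptional disks and the $-CH(M-m)$ loss), and control the zero-free factor by a Poisson--Jensen average rather than a sup bound -- this last point is indeed the one place where care is needed, and your handling of it (bounding $\int\log^-|g|$ by $\int\log^-|\phi|$ on a circle where $|P|\le 1$, then using Jensen at the normalized center) is sound. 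Two small remarks: the displayed conclusion of \cref{lem:Cartan} must be read as $\log|\phi(z)|\ge M-CH(M-m)$, which is how you read it and how it is used in the proof of \cref{lem:ldt-Cartan}; and your construction gives at most $n\lesssim M-m$ exceptional disks, which is stronger than the stated bound $K\lesssim H(M-m)$, so the cardinality claim is also covered. Apart from routine caveats (the degenerate case $M=m$, and choosing the Jensen radius so no zeros sit on the circle), the argument is complete; the paper's route simply replaces all of this by citing Levin's theorem, which after the same normalization and rescaling bookkeeping is the statement you reproved.
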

\begin{proof}
	(of \cref{lem:ldt-Cartan}) From \cref{cor:ldt-determinant} with
	$ H=-C\log r_0 $, $ C\gg 1 $ we know that there exists $ z_1 $, $ |z_1-z_0|\ll r_0 $ such that
	\begin{equation*}
		\log|f_N^a(z_1)|>NL^a+(\log r_0)(\log N)^C.
	\end{equation*}
	We can now apply Cartan's estimate on $ \D(z_1,100 r_0) $, with
	\begin{equation*}
		H=-C\log r_0,\quad M=NL^a+(\log N)^C,\quad m=NL^a+(\log r_0)(\log N)^C,
	\end{equation*}
	to get that
	\begin{equation*}
		\log|f_N^a(z)|>NL^a-(\log r_0)^2(\log N)^C,
	\end{equation*}
	for $ z\in \D(z_0,r_0)\setminus \mc B $, with $ \mc B $ as in \cref{lem:Cartan}. We can guarantee that
	there
	exists $ r\in(r_0/2,r_0) $ such that $ \partial\D(z_0,r)\subset \D(z_0,r_0)\setminus \mc B $ and
	\begin{equation*}
		\min_j \dist(\zeta_j,\partial \D(z_0,r))\gtrsim \frac{r_0}{k_0+1}.
	\end{equation*}
	The minimum principle now implies that
	\begin{equation*}
		\log \left| \frac{f_N^a(z)}{\prod(z-\zeta_j)} \right|>NL^a-(\log r_0)^2(\log N)^C
			+k_0\log c\frac{r_0}{k_0+1}>NL^a-2(\log r_0)^2(\log N)^C,
	\end{equation*}
	for $ z\in \D(z_0,r) $. The conclusion follows immediately.
\end{proof}
Finally, we note that \cref{lem:b-Cartan} follows analogously by using the large deviations estimate for
subharmonic functions \cite[Thm. 3.8]{GS-01-Holder}.

\bibliographystyle{alpha}
\bibliography{../Schroedinger}

\end{document}